\DeclareMathOperator{\End}{End}
\DeclareMathOperator{\Gal}{Gal}
\DeclareMathOperator{\ord}{ord}
\newtheorem{theorem}{Theorem}[section]
\newtheorem*{theorem*}{Theorem}
\newtheorem{lemma}[theorem]{Lemma}
\newtheorem{proposition}[theorem]{Proposition}
\newtheorem{corollary}[theorem]{Corollary}
\newtheorem{defn}[theorem]{Definition}
\numberwithin{equation}{section}
\newtheorem{lthm}{Theorem} 
\newcommand{\fp}{\mathfrak p}
\DeclareMathSymbol{\lsb@l}{\mathalpha}{letters}{`l}
\theoremstyle{remark}
\newtheorem{remark}[theorem]{Remark}
\newtheorem{example}[theorem]{Example}
\newcommand{\bE}{\mathbf{E}}
\newcommand{\EE}{\mathbb{E}}
\newcommand\EatDot[1]{}
\newcommand{\fF}{\mathfrak{F}}
\newcommand{\cG}{{\mathcal{G}}}
\newcommand{\cT}{{\mathcal{T}}}
\newcommand{\CC}{\mathbb{C}}
\newcommand{\fG}{\mathfrak{G}}
\newcommand{\Fp}{\mathbb{F}_p}
\newcommand{\cS}{{\mathcal{S}}}
\newcommand{\QQ}{\mathbb{Q}}
\newcommand{\ZZ}{\mathbb{Z}}
\newcommand{\cL}{\mathcal{L}}
\newcommand{\Qp}{\mathbb{Q}_p}
\newcommand{\Zp}{\mathbb{Z}_p}
\definecolor{Green}{rgb}{0.0, 0.5, 0.0}
\newcommand{\cO}{\mathcal{O}}
\newcommand{\cC}{\mathcal{C}}
\newcommand{\Q}{\mathbb{Q}}
\newcommand{\Z}{\mathbb{Z}}
\newcommand{\Aut}{\mathrm{Aut}}
\newcommand{\fL}{\mathfrak{L}}
\newcommand{\e}{\hfill$\Diamond$}
\renewcommand{\r}{\mathfrak r}
\renewcommand{\s}{\mathfrak s}
\renewcommand{\t}{\mathfrak t}
\newcommand{\cc}{\mathfrak c}
  \DeclareFontFamily{U}{wncy}{}
  \DeclareFontShape{U}{wncy}{m}{n}{<->wncyr10}{}
  \DeclareSymbolFont{mcy}{U}{wncy}{m}{n}
  \DeclareMathSymbol{\sha}{\mathord}{mcy}{"58}
  \DeclareMathSymbol{\zhe}{\mathord}{mcy}{"11}
\title{On ordinary isogeny graphs with level structures}
\newcommand{\cE}{\mathcal{E}}
\let\@wraptoccontribs\wraptoccontribs
\author[A. Lei]{Antonio Lei}
\address[Lei]{Department of Mathematics and Statistics\\University of Ottawa\\
150 Louis-Pasteur Pvt\\
Ottawa, ON\\
Canada K1N 6N5}
\email{antonio.lei@uottawa.ca}
\author[K. Müller]{Katharina Müller}
\address[Müller]{Institut für Theoretische Informatik, Mathematik und Operations Research\\Universität der Bundeswehr München\\ Werner-Heisenberg-Weg 39\\85577 Neubiberg\\Germany}
\email{katharina.mueller@unibw.de}
\subjclass[2020]{Primary:  05C25, 11G20 Secondary: 11R23, 14G17, 14K02}
\keywords{Isogeny graphs, Iwasawa theory for graphs, inverse problem for graphs}
\begin{document}
\begin{abstract}
Let $l$ and $p$ be two distinct prime numbers. We study $l$-isogeny graphs of ordinary elliptic curves defined over a finite field of characteristic $p$, together with a level structure. Firstly, we show that as the level varies over all $p$-powers, the graphs form an Iwasawa-theoretic abelian $p$-tower, which can be regarded as a graph-theoretical analogue of the Igusa tower of modular curves.  Secondly, we study the structure of the crater of these graphs, generalizing previous results on volcano graphs. Finally, we solve an inverse problem of graphs arising from the crater of $l$-isogeny graphs with level structures, partially generalizing a recent result of Bambury, Campagna and Pazuki.
\end{abstract}

\maketitle

\section{Introduction}
\label{S: Intro}
Let $p$ be a fixed prime number. Let $X$ be a finite connected graph (in this article, we allow multiple edges and loops in a graph). In a series of articles, \cite{vallieres,vallieres2,vallieres3} Vallières and McGown--Vallières studied Iwasawa theory of the so-called abelian $p$-towers of graphs above $X$ (see Definition~\ref{def:tower}; note that we have replaced the prime $\ell$ in the aforementioned works by $p$ in this article). Let $(X_n)_{n\ge0}$ be such a tower and write $\kappa_n$ for the number of spanning trees of $X_n$. Then there exist integers $\mu$, $\lambda$ and $\nu$ such that
\[
\ord_p(\kappa_n)=\mu p^n+\lambda n+\nu
\]
for $n\gg0$. This can be regraded as the graph-theoretic analogue of the seminal formula of Iwasawa on class groups of sub-extensions inside a $\Zp$-extension of a number field proved in \cite{iwasawa73}.

As discussed in \cite{iwasawa69,mazurwiles83}, Iwasawa theory of number fields draws strong analogies with its function field counterpart in which one studies towers of Galois coverings of curves. One important example of such towers is the Igusa tower of modular curves initially studied in \cite{igusa}; see also \cite{katz-mazur,mazurwiles83,hida09}. Roughly speaking, an Igusa tower consists of 
\[
X_0\leftarrow X_1\leftarrow\cdots\leftarrow X_n\leftarrow X_{n+1}\leftarrow\cdots
\]
where $X_n$ is the modular curve classifying isomorphism classes of $(E,P)$, where $E$ is an elliptic curve over a  finite field  of characteristic $p$ and $P$ is a point on $E$ of order $p^n$.

The first goal of this article is to construct an explicit $\Zp$-tower of graph coverings arising from isogeny graphs of ordinary elliptic curves defined over a finite field $k$, whose characteristic is $p$. This gives a graph theoretical analogue of Igusa towers. Let $l$ be a prime number distinct from $p$ and let $m\ge0$, $N\ge1$ be integers. We define in \S\ref{S:define}  the $l$-isogeny graph  $G_N^m$ whose vertices consist of {$\overline{k}$-}isomorphism classes of pairs $(E,P)$, where $E$ is an ordinary elliptic curve defined over $k$ and $P$ is a point of $E(\overline{k})$ of order $Np^m$, and edges between two vertices are defined by $l$-isogenies. When $N=1$ and $m=0$, this recovers the  volcano graphs studied in \cite{kohel,sutherland,pazuki}. The graphs $G_N^m$ can be regarded as an enhancement of the volcano graphs via the addition of a $\Gamma_1(Np^m)$-level structure. Similar (but slightly different) graphs have been studied in \cite{gorenkassaei}. See in particular {Sections 2 and 3 in op. cit.} 

A priori, $G_N^m$ is a directed graph. We shall write $\tilde G_N^m$ for the undirected graph obtained from $G_N^m$ by ignoring the directions of the edges. The first main result of the present article is the following:

\begin{lthm}[{Corollary~\ref{cor:main}}]\label{thmA}
    Let $E$ be an elliptic curve representing a non-isolated vertex of $\tilde G^0_1$ (i.e., a vertex whose degree is strictly positive). Let $\tilde\cG_N^m$ denote the connected component of $\tilde G_N^m$ containing a vertex arising from $E$. Then there exists an integer $m_0$ such that the graphs $\left(\tilde \cG_N^{m_0+r}\right)_{r\ge0}$ form an abelian $p$-tower in the sense of Vallières and McGown--Vallières.
\end{lthm}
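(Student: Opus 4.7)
The plan is to realize $\bigl(\tilde\cG_N^{m_0+r}\bigr)_{r\geq 0}$ as a sequence of compatible $\Z/p\Z$-Galois covers of undirected graphs, from which the abelian $p$-tower structure reads off. The transition maps are the obvious ones: $(E,P)\mapsto(E,pP)$ defines a graph homomorphism $\pi_m\colon G_N^{m+1}\to G_N^m$, since for $m\geq 1$ the point $pP$ has order exactly $Np^m$ whenever $P$ has order $Np^{m+1}$, and every $l$-isogeny $\phi$ satisfies $\phi(pP)=p\phi(P)$. Restricting to the connected component above $\tilde\cG_N^m$ yields a surjection $\tilde\cG_N^{m+1}\twoheadrightarrow\tilde\cG_N^m$.

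Next, for each $c\in\Z/p\Z$ I set $\sigma_c(E,P):=(E,(1+cNp^m)P)$. The scalar $1+cNp^m$ lies in $(\Z/Np^{m+1}\Z)^\times$ and is congruent to $1$ modulo $Np^m$, so $\sigma_c$ defines a graph automorphism of $G_N^{m+1}$ commuting with $l$-isogenies, and $\pi_m\circ\sigma_c=\pi_m$. The $p$ points $(1+cNp^m)P$ for $c\in\Z/p\Z$ form the coset $P+E[p](\overline{k})$ (using that $Np^mP$ has order $p$ and $E[p](\overline{k})$ is cyclic of order $p$ for ordinary $E$), so they exhaust the multiplication-by-$p$ preimage of $pP$; consequently, once freeness is established, the family $(\sigma_c)_{c\in\Z/p\Z}$ realizes $\pi_m$ as a degree-$p$ Galois quotient.

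The heart of the argument is freeness of this action for $m$ large. A fixed point with $c\neq 0$ would require some $\alpha\in\Aut(E)$ with $\alpha P=(1+cNp^m)P$; decomposing $P=P_N+P_{p^{m+1}}$ and using $Np^mP_N=0$, the binding condition is $\alpha P_{p^{m+1}}=(1+cNp^m)P_{p^{m+1}}$. Because $E$ is ordinary, $\End(E)$ is an order in an imaginary quadratic field in which $p$ splits, and one of the primes above $p$ induces an embedding $\End(E)\hookrightarrow\Zp$ through which the action on $E[p^\infty](\overline{k})\cong\Qp/\Zp$ factors; the condition becomes the $p$-adic congruence $\alpha\equiv 1+cNp^m\pmod{p^{m+1}}$ in $\Zp$. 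Since $\Aut(E)\setminus\{1\}$ is a finite set of nontrivial roots of unity whose images in $\Zp$ are bounded away from $1$ (automatic for $p$ odd; for $p=2$ one just passes beyond the level at which $-1\equiv 1$), this is impossible for $m$ large. As only finitely many ordinary curves lie in the connected component of $E$ in $\tilde G_1^0$, only finitely many endomorphism rings intervene and a uniform $m_0$ can be chosen.

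With freeness in hand, $\sigma_\bullet$ acts transitively on each fiber, so the preimage of $\tilde\cG_N^m$ is a single connected component of $G_N^{m+1}$, which we take to be $\tilde\cG_N^{m+1}$; this realizes $\pi_m$ as a $\Z/p\Z$-Galois cover of undirected graphs. Iterating, the element $1+Np^{m_0}$ generates a cyclic subgroup of order $p^r$ in $(\Z/Np^{m_0+r}\Z)^\times$ acting freely on $\tilde\cG_N^{m_0+r}$ with quotient $\tilde\cG_N^{m_0}$, yielding the abelian $p$-tower structure in the sense of Vallières and McGown--Vallières. The main obstacle is the freeness step: making $m_0$ uniform over all endomorphism rings appearing in the component of $E$, and handling the mildly subtler $p=2$ case where $(\Z/2^m\Z)^\times$ fails to be cyclic.
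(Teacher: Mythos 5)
Your freeness argument reproduces the injectivity step of the paper's Proposition~\ref{prop:Galois-cover} and is fine as far as it goes — in fact, for $p$ odd and $p$ split in the CM field $K$, every nontrivial $\alpha\in\Aut(E)$ already has $\alpha\not\equiv 1\pmod{\mathfrak p}$, so freeness holds for all $m\ge 1$ with no need to enlarge $m$ (and your $p=2$ aside is moot since the paper fixes $p$ odd). The genuine gap is the sentence ``With freeness in hand, $\sigma_\bullet$ acts transitively on each fiber, so the preimage of $\tilde\cG_N^m$ is a single connected component.'' A free, fiber-transitive $\Z/p\Z$-action does \emph{not} force the total space to be connected: the trivial product cover $\tilde\cG_N^m\times\Z/p\Z$ is free, fiber-transitive, and disconnected. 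If $\pi_m^{-1}(\tilde\cG_N^m)$ were $p$ disjoint copies of the base, then $\tilde\cG_N^{m+1}$ (defined in the statement as the connected component over $E$) would map isomorphically onto $\tilde\cG_N^m$, and the Galois groups in the tower would all be trivial rather than $\Z/p^r\Z$.

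This connectedness is exactly what the paper's Proposition~\ref{prop:components} establishes, and it is where $m_0$ actually comes from: $m_0$ is chosen so that the multiplicative order of $l$ in $(\Z/Np^m\Z)^\times$ equals $cp^{m-m_0}$ for a fixed $c$ and all $m\ge m_0$; the full $\pi$-fiber over a vertex $(E,P_0)\in V(G_N^{m_0})$ is then precisely $\{(E,l^{cn}P)\}_{n\ge 0}$, and all of these lie in one connected component via the dual-isogeny walk of Remark~\ref{rk:dual-isogeny}. That walk exists precisely because $E$ is a non-isolated vertex of $\tilde G_1^0$, i.e.\ it carries some $l$-isogeny — a hypothesis your proof never uses, which is a symptom of the missing step. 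Without it the component count of $G_N^m$ grows without bound as $m\to\infty$ (Remark~\ref{rk:components}) and no abelian $p$-tower forms. So your $m_0$ has the wrong origin: it must be governed by the $p$-adic valuation of $\ord(l\bmod Np^m)$, not by $\Aut(E)$.
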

In Appendix \ref{app}, we explain how to realize such a tower of graph coverings as voltage assignment graphs when $N=1$. This may be of independent interest since  voltage assignments are used to define Iwasawa invariants of  abelian $p$-towers in \cite{vallieres,vallieres2,vallieres3}. 

The integer $m_0$ featured in Theorem~\ref{thmA} depends on the variation of the number of connected components in $G_N^m$ as $m$ increases. We show in Proposition~\ref{prop:components} that when $m$ is sufficiently large, the number of components in $
\tilde G_N^m$ stabilizes. Such stabilization is necessary to ensure that the coverings $
\tilde \cG_N^{m+r}/
\tilde \cG_N^m$ are Galois.

In the case of volcano graphs, each vertex is assigned a "level", depending on the endomorphism ring of the elliptic curve attached to the vertex. The level zero vertices form the "crater" of a volcano graph. In \cite[Proposition~3.14]{pazuki}, Bambury--Campagna--Pazuki gave a complete description of all possible craters. One may extend the concept of "levels" and "craters" to $G_N^m$ in a natural manner (see Definition~\ref{def:level}). In \S\ref{S:crater}, we give a detailed description of the crater of $G_N^m$; see in particular Remark~\ref{rk:conclude} and Proposition~\ref{prop:principal}. Several explicit examples are given throughout the section. The introduction of $\Gamma_1(Np^m)$-level structure has the advantage that we may avoid working with loops if we assume either $N$ or $m$ is sufficiently large (see Lemma~\ref{lem:degree-split case}). {While many of our results are direct analogues of those given in \cite{pazuki}, the absence of loops allows us to simplify some of the proofs.}

In \S\ref{S:inverse}, we define the so-called "abstract tectonic craters", which are graphs that have the same geometry as a connected component of the crater that we describe in \S\ref{S:crater} (see Definition~\ref{def:crater}). We prove a result on the inverse problem for such graphs. 

\begin{lthm}[{Theorem~\ref{thm:inverse}}]\label{thmB}
    Let  $G$ be an abstract tectonic crater. There exist infinitely many pairs of distinct primes $p $ and $l$, and nonnegative integers $N$ such that one of the connected components of the crater of the $l$-isogeny graph $G_N^1$ is isomorphic to $G$.
\end{lthm}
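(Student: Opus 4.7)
The plan is to separate the combinatorics of $G$ into a ``horizontal'' part (an $l$-isogeny cycle on the underlying volcano crater, governed by class groups of imaginary quadratic orders) and a ``vertical'' part (how the $\Gamma_1(Np)$-level structure is permuted as one walks around this cycle), and to realize each piece independently. The freedom to vary $N$, absent in the volcano inverse problem of Bambury--Campagna--Pazuki, is what produces the vertical flexibility needed. Concretely, by Definition~\ref{def:crater} together with Remark~\ref{rk:conclude} and Proposition~\ref{prop:principal}, an abstract tectonic crater $G$ is specified by two invariants: (i) the length $r$ of the underlying crater cycle on elliptic curves, and (ii) the orbit structure of the induced $l$-isogeny action on the $\Gamma_1(Np)$-level structures sitting above a fixed crater vertex.

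\textbf{Realize the horizontal invariant.} The integer $r$ equals the order of a prime $\fL$ above $l$ in the ideal class group of $\End(E)$, where $E$ is any crater vertex. For any $r\ge 1$, a Chebotarev/Dirichlet density argument on class groups of imaginary quadratic orders---this is the argument underpinning the Bambury--Campagna--Pazuki inverse theorem---produces infinitely many pairs of distinct primes $(p,l)$ and an ordinary elliptic curve $E$ over a finite field of characteristic $p$ for which $\ord(\fL) = r$. Enough density remains to impose further auxiliary splitting conditions on $p$ and $l$ as needed.

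\textbf{Realize the vertical invariant and conclude.} With $(p, l, E)$ fixed, I would then choose $N$ coprime to $pl\cdot \mathrm{disc}(\End(E))$ so that the action of the $l$-isogeny cycle on a suitable subset of $E[Np]$ has the orbit structure prescribed by $G$; after possibly replacing the base field by a finite extension, the relevant $N$-torsion becomes rational. By the explicit description in~\S\ref{S:crater}, this orbit length is controlled by the multiplicative order of $l$ modulo $N$ (via the action on the Tate module away from $l$), so Dirichlet's theorem on primes in arithmetic progressions supplies such an $N$. A connected component of the resulting crater of $G_N^1$ is then isomorphic to $G$ by construction, and infinitely many realizations arise because the previous step already produces infinitely many $(p, l)$.

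\textbf{Main obstacle.} The principal difficulty is the decoupling claimed in the final step: for each $(p, l, E)$ from the horizontal step, one must ensure that a choice of $N$ exists simultaneously avoiding the loops and degenerate configurations described in~\S\ref{S:crater} \emph{and} yielding exactly the prescribed vertical orbit length. Lemma~\ref{lem:degree-split case}, which shows that loops disappear once $N$ (or $m$) is large enough, permits the needed flexibility by allowing $N$ to be enlarged arbitrarily along the arithmetic progression furnished by Dirichlet's theorem, decoupling the vertical realization from the class-group condition fixed in the horizontal step.
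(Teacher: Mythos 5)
Your decoupling into a ``horizontal'' class-group step and a ``vertical'' level-structure step does not match the logical structure needed, and the order in which you fix the data is reversed relative to what actually works. The critical point, flagged explicitly in the paper's Outlook, is that the proof ``depends crucially on using Tchebotarev's Theorem to choose $l$'' \emph{last}. Your proposal fixes $(p,l,E)$ in the horizontal step and then tries to choose $N$; but once $l$ is fixed, the Frobenius elements of $\mathfrak{L}$ and $\overline{\mathfrak{L}}$ in the relevant ray-class-type extensions are rigid, and you have no means to tune them. An abstract tectonic crater is governed by four parameters $(\r,\s,\t,\cc)$, not two, and realizing them requires the Frobenius of $\mathfrak{L}$ (and hence of $\overline{\mathfrak{L}}$) to land simultaneously on prescribed elements in each of five factors of a group of the shape
\[
(\mathcal{O}_K/\mathfrak{N})^\times \times (\mathcal{O}_K/\overline{\mathfrak{N}})^\times \times (\mathcal{O}_K/\mathfrak{M})^\times \times (\mathcal{O}_K/\overline{\mathfrak{M}})^\times\times (\mathcal{O}_K/\overline{\mathfrak{p}})^\times.
\]
That calls for Tchebotarev applied \emph{after} $p$, $N'$, $M'$, $K$ and the CM curve $\mathbf{E}$ have been chosen, so the conjugacy class you hit can encode $\ord(a)=\s$, $\ord(b)=\t$, $\ord(d^{\s})=\r$ and the cross-condition ${d'}^{\cc\t}=d^{\s}$ all at once. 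Your ``vertical'' step, which attempts to realize the orbit structure by picking $N$ with $l$ already frozen, cannot produce these coupled conditions; Dirichlet's theorem controls only the multiplicative order of $l$ mod $N$, not the independent orders of $\mathfrak{L}$ and $\overline{\mathfrak{L}}$ on the separate $\mathfrak{N}$-, $\overline{\mathfrak{N}}$-, $\mathfrak{M}$-, $\overline{\mathfrak{M}}$-, $\overline{\mathfrak{p}}$-components, nor the precise offset $\cc$ between blue and green closed paths.

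The ``horizontal invariant'' is also misidentified. In the paper's construction $\mathfrak{L}$ is required to split completely in $L'/K$, in particular in the ray class field $F$, so $\mathfrak{L}$ is principal and the underlying cycle of curves is trivial (a single $j$-invariant with self-isogenies). There is no class-group realization of a cycle length $r$ happening at all; the entire combinatorial structure of $G$ is carried by the $\Gamma_1(Np)$-level data through the Frobenius action on torsion. So the Bambury--Campagna--Pazuki-style class-number argument you invoke is not the mechanism here, and the claim that $r=\ord(\mathfrak{L})$ in the class group of $\End(E)$ should match a parameter of $G$ does not correspond to anything in Definition~\ref{def:crater}. Your ``Main obstacle'' paragraph anticipates a decoupling issue but locates it in the wrong place (avoiding loops vs.\ degenerate configurations), whereas the real obstruction is that fixing $l$ first destroys the Tchebotarev flexibility the proof needs.
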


This can be regarded as a partial generalization of results in \cite{pazuki}, where the inverse problem for volcano graphs over $\Fp$ without level structure has been studied. The inverse problem without level structure is false when $k\ne \Fp$ because of connectedness issue (see \S5.1 of op. cit. for a detailed discussion). In the present paper, we consider connected components separately allowing us to avoid this issue. Furthermore, we  have the liberty to increase the level to simplify the structure of the graphs being studied. In particular, we do not recover results of \cite{pazuki} since the level is fixed to be $1$ in the aforementioned work.

\subsection*{Outlook}
In the setting of number fields, {questions on distributions of Iwasawa invariants attached to cyclotomic $\Zp$-extensions of imaginary quadratic fields have initially been studied in \cite{EJV}. More recently, similar questions on abelian number fields have been studied in \cite{delbourgoheiko}}. In \cite{DLRV}, questions on distributions of Iwasawa invariants attached to abelian $p$-towers of graphs were studied. Unlike the setting of number fields, the notion of cyclotomic extensions does not exist in the context of graphs. The towers given by Theorem~\ref{thmA} could potentially be a candidate of substitution for cyclotomic extensions. We plan to study how the Iwasawa invariants vary as $l$ and/or $p$ vary. Techniques developed in \cite{Ari} could potentially be adopted in this setting.

It may also be interesting to seek arithmetic interpretation of the $p$-adic zeta functions attached the towers given by Theorem~\ref{thmA}. One might naively hope that they could be related to  $p$-adic zeta functions of modular curves over $k$ originating from the Iwasawa theory of function fields. Results in  \cite{sugiyama} tell us that the latter are closely related to supersingular isogeny graphs. However,  we would not be able to construct abelian $p$-towers for supersingular isogeny graphs in the manner presented in this article because of the lack of $p$-power torsions on supersingular elliptic curves over finite fields of characteristic $p$. This suggests that fundamentally new ideas are required to establish links between abelian $p$-towers of ordinary isogeny graphs  with objects from function field Iwasawa theory.

In a different direction, we plan to study the the following inverse problem further generalizing Theorem~\ref{thmB}: Given a volcano graph $G$ with a tectonic crater (see Definition~\ref{def:crater}), can we find primes $p$ and $l$, a finite field $k$ of characteristic $p$ and an integer $N$ such that $G$ is a connected component of $G_N^m$ for some non-negative integer $m$? The additional difficulty in solving this question compared to Theorem~\ref{thmB} is that a volcano of depth $d>0$ (i.e a volcano not only consisting of a crater) does not leave any room for choosing the prime $l$, whereas our proof of Theorem~\ref{thmB} depends crucially on using Tchebotarev's Theorem to choose  $l$. One could hope to resolve this problem by choosing the imaginary quadratic field $K$ appropriately -- similar to the techniques employed to solve the inverse volcano problem for $N=1$ and $m=0$ in \cite{pazuki}.

Finally, we mention that several works on isogeny graphs with level structures have been released in recent years; see \cite{LM2,arpin,gorenkassaei,thesis-roda,codogni-lido,XZQ}. In a different vein, Pengo--Vallières \cite{PV} developed a general theory of graph coverings indexed by natural numbers of a given finite graph using Mahler measures. More specifically, given a finite graph $X$, they study a collection of graph coverings $\{X_n\}_{n\ge1}$ of $X$, where $X_n/X$ is a Galois covering whose Galois group is isomorphic to $\ZZ/n\ZZ$. It seems natural to study how isogeny graphs with level structures might fit in this framework.

\subsection*{Acknowledgement}
We thank Pete Clark, Daniel Larsson, Fabien Pazuki and Daniel Vallières for interesting discussions during the preparation of this article. We are also indebted to the anonymous referees for helpful comments and suggestions which led to many improvements in the article. The authors' research is supported by the NSERC Discovery Grants Program RGPIN-2020-04259 and RGPAS-2020-00096. Parts of this paper is based upon work supported by the National Science Foundation under Grant No. DMS-1928930 while A. L. was in residence at the MSRI / Simons Laufer Mathematical Sciences Institute (SLMath) in Berkeley, California, during the Spring 2023 semester.

\section{The definition of isogeny graphs and basic properties}\label{S:basic}

Throughout this article, a graph $X$ may be  directed or undirected. We write $V(X)$ for the set of vertices of $X$ and $\EE(X)$ for the set of edges of $X$. We shall say that $v\in V(X)$ admits an edge in $X$ if there exists $e\in \EE(X)$ such that $v$ is one of the end-points of $e$.

\subsection{Defining ordinary isogeny graphs}\label{S:define}
Let $p$ be an odd prime. We fix a second prime number $l\neq p$ and $N\ge1$ an integer coprime to $pl$. Further, we fix  a finite field $k$ of characteristic $p$.  We fix once and for all a set of  representatives of $\overline k$-isomorphism classes of elliptic curves defined over $k$, which we denote by $\mathcal{E}$. Note that a $\overline{k}$-isomorphism of  two curves over $k$ can be realized over the unique quadratic extension $k'$ of $k$ as long as the $j$-invariant is different from $0$ and $1728$.

We introduce the main object of interest of the present article:

\begin{defn}
\item[i)] Let $E$ and $E'$ be  elliptic curves defined over $k$. Let $P\in E(\overline{k})$ and $P'\in E'(\overline{k})$ be points of order $Np^m$. We say that  $(E,P)$ and $(E',P')$ are equivalent if there is a $\overline k$-isomorphism of elliptic curves $\phi:E\rightarrow E'$ with $\phi(P)=P'$.
\item[ii)] Let $E$ and $E'$ be  elliptic curves defined over $k$. Let $\phi$ and $\phi'$ be isogenies from  $E$ to $E'$ defined over $\overline k$. We say that $\phi$ and $\phi'$ are equivalent if $\ker(\phi)=\ker(\phi')$.
\item[iii)]For an integer $m\ge1$, we define a directed graph $G^m_N$ whose vertices are the equivalence classes of tuples $(E,P)$ given by i). There is a directed edge from $(E,P)$ to $(E',P')$ if and only if there is an  $l$-isogeny $\phi:E\rightarrow E'$  such that $\phi(P)=P'$, with each equivalence class of such isogenies gives rise to exactly one edge.
\end{defn}

By an abuse of notation, if $v\in V(G_N^m)$, we shall take any one representative of the equivalence class and simply write $(E,P)$. If $\phi$ gives rise to an edge from $(E,P)$ to $(E',P')$, we shall write $\phi(E,P)$, $(E',P')$ and $(\phi(E),\phi(P))$ interchangeably.

\begin{remark} Let $E\in\cE$ and $\alpha\in \textup{Aut}(E)$.
    By definition,  $(E,\alpha P)$ and $(E,P)$ give rise to the same vertex in $G_N^m$. 
    
    In the cases where $j(E)=0$ or $j(E)=1728$,  the group $\Aut(E)$ is strictly larger than $\{\pm 1\}$. Assume that this is the case and let $E'$ be an elliptic curve with $\textup{Aut}(E')=\{\pm1 \}$ such that there is an $l$-isogeny $\phi\colon E\to E'$. Let $\alpha\in \textup{Aut}(E)\setminus\{\pm1\}$. Then $(E,P)$ and $(E,\alpha P)$ define the same vertex in $G_N^m$, whereas $(E',\phi(P))$ and $(E',\phi(\alpha P))$  give rise to two distinct vertices. The isogenies $\phi$ and $\phi\circ \alpha$ are not equivalent, resulting in two edges from $(E,P)=(E,\alpha P)$ to $(E',\phi(P))$ and $(E',\phi(\alpha P))$, respectively.
    \e
\end{remark}

\begin{remark}
      If $E$ is an ordinary elliptic curve over $\overline{\Fp}$, there exists an imaginary quadratic field $K$ and an order $\mathcal{O}$ in $K$ such that $\textup{End}(E)=\mathcal{O}$. Note that $p$ is split in $K$. Thus, if $E$ is defined over $k$, then all of its endomorphisms are defined over $k$.\e
\end{remark}

\begin{remark} \label{rk:dual-isogeny}
    Let $(E,P)\in V(G_N^m)$. If $\phi\colon E\to E'$ is an $l$-isogeny that maps $P$ to $P'$, then the dual isogeny maps $P'$ to the point $lP$ on $lE$, where $lE$ denotes the image of the multiplication-by-$l$ map on $E$. The curve $lE$ is isomorphic to $E$. Let $\alpha$ be such an isomorphism. Then $(lE,lP)$ is equivalent  to $(E,\alpha(lP))$, which we explain below.
    
    Let  $\End(E)=\cO$ and $K=\End(E)\otimes\QQ$. By Deuring's lifting theorem (see \cite[Chapter 13, Theorem 14]{lang-elliptic}), there exists a lift  $\mathbf{E}$ of $E$ over a finite extension $L$ of $K$ and a prime ideal $\mathfrak{p}$ above $p$ such that $\mathbf{E}\pmod{\mathfrak{p}'}=E$ for some ideal $\mathfrak{p}'$ above $p$ in the ring of integers of $L$ and that $\End(\bE)=\End(E)=\cO$. Then $P$ admits a unique lift $\mathbf{P}\in \mathbf{E}[N\overline{\mathfrak{p}}^m]\cong E[Np^m]$, where $\overline{\mathfrak{p}}$ is an ideal above $p$ in $\mathcal{O}$ that is coprime to ${\mathfrak{p}'}$. (The isomorphism follows from the theory of formal groups when $\mathcal{O}=\mathcal{O}_K$; see for example \cite[Chapters I and II]{shalit}. As every CM elliptic curve is isogenous to one with complex multiplication by $\mathcal{O}_K$, this result easily carries over to general CM elliptic curves.)
    
    To determine $\alpha(lP)$, it suffices to consider $\tilde\alpha(l\mathbf{P})$, where $\tilde\alpha:l\mathbf{E}\rightarrow \mathbf{E}$ is a lift of $\alpha$. Furthermore, we may even base change to $\mathbb{C}$.
    We have $\mathbf{E}(\CC)=\CC/\Lambda$ for some lattice $\Lambda$ in $\CC$. Then, we may realize $l \bE(\CC) $ as $\CC/\frac{1}{l}\Lambda$ and 
    \begin{align*}
        \tilde\alpha:l \bE(\CC)&\rightarrow\bE(\CC)\\
        x+\frac{1}{l}\Lambda&\mapsto lx+\Lambda.
    \end{align*}
    Furthermore, we have
        \begin{align*}
        [l]: \bE(\CC)&\rightarrow l\bE(\CC)\\
        x+\Lambda&\mapsto x+\frac{1}{l}\Lambda.
    \end{align*}
    If we write $\mathbf{P}=x+\Lambda$, then $\tilde\alpha(l\mathbf{P})=l\mathbf{P}$. Thus $\alpha((lE,lP))=(E,lP)$ as claimed.\e
\end{remark}
 It follows from Remark~\ref{rk:dual-isogeny} that if $\phi$ is an isogeny that induces an edge from $(E,P)$ to $(E',P')$ in $G_N^m$, then the dual isogeny $\hat\phi$ gives an edge from $(E',P')$ to $(E,lP)$.

\begin{remark}
Each curve $E_{/\overline{\Fp}}$ admits $l+1$  isogenies that are of degree $l$. Let $\phi_1,\dots, \phi_{l+1}$ denote these isogenies. Note that  $\phi_i(E)$ may not be defined over $k$. As the vertices in $G_N^m$ are defined by elliptic curves over $k$, not all $\phi_i$ will necessarily result in an edge in $G_N^m$.
In general, the number of edges between $E$ and $E'$ is exactly the multiplicity of $j(E')$ as a root of the modular polynomial $\Phi_l(j(E),Y)$. \e
\end{remark}

If there is an $l$-isogeny between two elliptic curves $E$ and $E'$, then $\textup{End}(E)\otimes \Q=\textup{End}(E')\otimes \Q$. Thus,  $\End(E)$ and $\End(E')$ are orders in the same imaginary quadratic field. In particular, to each connected component of $G_N^m$, we may attach a unique imaginary quadratic field. This allows us to give the following definition:

\begin{defn}
Let $\cG_N^m$ be a connected component of $G_N^m$. Let $(E,P)$ be any vertex in $\cG_N^m$. We call $\End(E)\otimes \Q$ the \textbf{CM field} of $\cG_N^m$.
\end{defn}


\subsection{Coverings of ordinary isogeny graphs}
The goal of this section is to show that as $N$ and $m$ vary, the graphs introduced in the previous section give rise to coverings of graphs.  
Let $r,m,N,N'$ be nonnegative integers such that $N'|N$. There is a natural projection
\[\pi_{Np^{m+r}/N'p^m}\colon V(G_N^{m+r})\to V(G_{N'}^m)\]
given by $(E,P)\mapsto (E,\frac{Np^r}{N'}P)$. Further, if $\phi$ is an $l$-isogeny such that $\phi(E,P)=(E',P')$, we have $\phi\circ\pi_{Np^{m+r}/N'p^m}(E,P)=\pi_{Np^{m+r}/N'p^m}(E',P')$. Therefore, $\pi_{Np^{m+r}/N'p^m}$ extends to a map on the whole graph. By an abuse of notation, we shall write 
\[\pi_{Np^{m+r}/N'p^m}\colon G_N^{m+r}\to G_{N'}^m\]
for this map.

Let $\mathcal{G}_N^{m+r}$ be a fixed connected component of $G_N^{m+r}$ and let $\mathcal{G}_{N'}^m$ be the unique connected component of $G_{N'}^m$ that contains $\pi_{Np^{m+r}/N'p^m}(\mathcal{G}_N^{m+r})$.
\begin{lemma}\label{lem:cover}
The map $\pi_{Np^{m+r}/N'p^m}$ induces a covering of connected graphs $\mathcal{G}_N^{m+r}/\mathcal{G}_{N'}^m$.
\end{lemma}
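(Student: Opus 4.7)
My approach is to verify the two defining properties of a covering of graphs separately: local bijectivity on edges incident to each vertex, and surjectivity on vertices. The whole argument rests on the single observation that $\gcd(l,Np^{m+r})=1$, so that every $l$-isogeny restricts to an isomorphism on $Np^{m+r}$-torsion.

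\emph{Step 1: $\pi=\pi_{Np^{m+r}/N'p^m}$ is a well-defined graph homomorphism.} I would first check that $\frac{Np^r}{N'}P$ has order exactly $N'p^m$ when $P$ has order $Np^{m+r}$, and that the map descends to equivalence classes: any $\overline{k}$-isomorphism $\alpha\colon E\to E'$ with $\alpha(P)=P'$ automatically satisfies $\alpha\bigl(\tfrac{Np^r}{N'}P\bigr)=\tfrac{Np^r}{N'}P'$. Compatibility with edges is immediate: if an $l$-isogeny $\phi$ sends $(E,P)$ to $(E',P')$, then $\phi\bigl(\tfrac{Np^r}{N'}P\bigr)=\tfrac{Np^r}{N'}P'$, so $\phi$ again induces an edge from $\pi(E,P)$ to $\pi(E',P')$.

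\emph{Step 2: Local bijection on edges.} Fix $v=(E,P)\in V(\mathcal{G}_N^{m+r})$ and put $\bar v=\pi(v)=(E,Q)$. I want a bijection between edges of $\mathcal{G}_N^{m+r}$ leaving $v$ and edges of $\mathcal{G}_{N'}^m$ leaving $\bar v$ (and similarly for incoming edges, via Remark~\ref{rk:dual-isogeny}). Both sets are parametrized by equivalence classes of $l$-isogenies out of $E$ whose target lies in $\mathcal{E}$ — a condition that depends only on $E$, not on the chosen level-structure point. Concretely, an edge at $\bar v$ is given by an $l$-isogeny $\phi\colon E\to E'$; I would lift it to the edge at $v$ defined by the same $\phi$ landing at $(E',\phi(P))$, which is a vertex of $G_N^{m+r}$ because $\phi$ restricts to an isomorphism on $Np^{m+r}$-torsion (as $\ker\phi$ has order $l$, coprime to $Np^{m+r}$); conversely, an edge at $v$ induced by $\psi\colon E\to E''$ projects to the edge at $\bar v$ induced by the same $\psi$. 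These two assignments are mutually inverse at the level of equivalence classes because the kernel of $\phi$, which determines the equivalence class of an $l$-isogeny, is unchanged by the construction.

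\emph{Step 3: Surjectivity.} Once local bijectivity is established, surjectivity of $\pi\colon\mathcal{G}_N^{m+r}\to\mathcal{G}_{N'}^m$ follows by a standard path-lifting argument. Pick any $v_0\in V(\mathcal{G}_N^{m+r})$ and any target vertex $w\in V(\mathcal{G}_{N'}^m)$. By definition of $\mathcal{G}_{N'}^m$ and its connectedness, there is a path in $\mathcal{G}_{N'}^m$ from $\pi(v_0)$ to $w$; using Step 2 repeatedly, I lift this path edge by edge starting at $v_0$ to a path in $\mathcal{G}_N^{m+r}$, and the endpoint of the lifted path projects to $w$.

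\emph{Main obstacle.} There is no deep difficulty; the subtle point to get right is Step~2, in particular making sure that the notion of "edge" (equivalence classes of $l$-isogenies, which may involve non-trivial automorphisms of $E$ when $j(E)\in\{0,1728\}$) is handled cleanly on both sides of $\pi$. Since $\pi$ fixes the underlying elliptic curve and since the automorphisms of $E$ act compatibly on $P$ and on $\tfrac{Np^r}{N'}P$, the counting of edges at $v$ and at $\pi(v)$ matches even in these exceptional cases, and the covering property follows.
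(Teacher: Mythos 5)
Your proof is correct and takes essentially the same approach as the paper: the key observation in both is that edges at $(E,P)$ and at its image depend only on $E$ (since they are parametrized by kernels of $l$-isogenies out of $E$), giving the local bijection, and surjectivity then follows by lifting a path in $\mathcal{G}_{N'}^m$ starting from a known preimage. The paper presents surjectivity first by directly composing the isogenies along the path and then observes local bijectivity, whereas you establish local bijectivity first and phrase the surjectivity as iterated edge-lifting; the content is the same.
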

\begin{proof}
    To simplify notation,  we write $\mathcal{G}$ for $\mathcal{G}_N^{m+r}$ and $\mathcal{H}$ for $\mathcal{G}_{N'}^m$. Further, we write $\pi$ for $\pi_{Np^{m+r}/N'p^m}$. We first show that  $\pi(V(\mathcal{G}))= V(\mathcal{H})$.

     Let $u=(E,P)\in V(\mathcal{G})$ and  let $v=(E', Q)\in V(\mathcal{H})$. Our goal is to find a preimage of $v$ in $V(\cG)$ under $\pi$. Let $w=(E,P_0)=\pi((E,P))$. As $\mathcal{H}$ is connected, there exists a path $C$ from $w$ to $v$. Let $n$ be the length of $C$ and write $\phi_1,\phi_2,\ldots \phi_n$ for the isogenies corresponding the edges of $C$. The composition $\Phi=\phi_n\circ\cdots \circ\phi_1$ is an isogeny from $E$  to $E'$ with $\Phi(\frac{Np^{r}}{N'}P)=Q$.
     
     Let $v'=(E',\Phi(P))\in G_N^{m+r}$. The isogenies $\phi_i's$ induce a path from $u$  to $v'$. Therefore, $v'$ belongs to $V(\cG)$. Furthermore, it is clear from definition that $\pi(v')=v$. This proves our claim above.

     As for edges, there is a one-one correspondence between the edges for which $(E,P)\in V(\cG)$ is the target (resp. source)  and the $l$-isogenies for which $E$ is the codomain (resp. domain). The same can be said for $\pi(E,P)=(E,\frac{Np^r}{N'}P)$. Thus, it follows that $\pi$ is locally an isomorphism of graphs, which concludes the proof of the lemma.
\end{proof}

Suppose that $N=N'$ and $r=1$. Since $E$ is defined over $\overline{\Fp}$ and $E$ is ordinary, the cover  $\cG_N^{m+1}\rightarrow \cG_N^m$ is of degree $p$. We will be interested in the tower of covers of the form
\[
\cG_N^m\leftarrow\cG_N^{m+1}\leftarrow\cG_N^{m+2}\leftarrow\cdots.
\]

\subsection{Horizontal and vertical edges}

In this section, we are interested in counting edges in and out of a vertex in $G_N^m$.
  A large part of our discussion therein has been inspired by the work of Bambury--Campagna--Pazuki \cite{pazuki}.

\begin{defn}\label{def:directions}
    Suppose that $\phi:E\rightarrow E'$ is an $l$-isogeny, and write $\mathcal{O}=\End(E)$ and $\mathcal{O'}=\End(E')$. Recall that $\mathcal{O}$ and $\mathcal{O}'$ are rings in the same imaginary quadratic field. There are three cases that may arise:
    \item[1)] $[\mathcal{O}:\mathcal{O}']=1$. In this case, we say that  $\phi$ is \textit{\textbf{horizontal}}.
    \item[2)] $[\mathcal{O}:\mathcal{O}']=l$. In this case, we say that $\phi$ is \textit{\textbf{descending}}.
    \item[3)] $[\mathcal{O}':\mathcal{O}]={l}$. In this case, we say that $\phi$ is \textit{\textbf{ascending}}. 
    We say that $\phi$ is \textbf{vertical} if it is either descending or ascending.

    If $\phi$ gives rise to $e\in \EE(G_N^m)$, we shall use the same terminology introduced above to describe $e$.
\end{defn}

\begin{defn}\label{def:level}
    Let $(E,P)$ be a vertex in $G_N^m$ and write $\End(E)=\Z+f\mathcal{O}_K$, where $f\in\ZZ$. 
    \item[i)] We call $v_l(f)$ the \textbf{level} of $E$. Note that this is well-defined (i.e., $v_l(f)$ is independent of the choice of $f$) and it only depends on the curve $E$,  not on the point $P$. 
    \item[ii)] The subgraph of $G_N^m$ generated by the set of vertices of level zero is called the \textbf{crater} (i.e., the maximal subgraph of $G_N^m$ containing all the level {zero} vertices). We write $\cC(G_N^m)$ for the crater of $G_N^m$.
\item[iii)]Let $\mathcal{G}_N^m$ be a connected component of $G_N^m$. We define the \textbf{depth} of $\mathcal{G}_N^m$ to be the maximal integer $d$ such that there is a vertex of level $d$ in $\mathcal{G}_N^m$. 
\item[iv)]If  $\mathcal{G}_N^m$ is as above, the crater of $\mathcal{G}_N^m$ is defined to be the intersection of $\cC(G_N^m)$ and $\mathcal{G}_N^m$. It will be denoted by $\cC(\cG_N^m)$.
 \end{defn}

Suppose that $N=1$ and $m=0$. The vertices of $G_1^0$ consist of isomorphism classes of elliptic curves (with $P$ taken as the identity element). Furthermore, if $\phi$ induces an edge $e$ from $E$ to $E'$, then the dual isogeny $\hat\phi$ of $\phi$ induces an edge $\hat e$ from $E'$ to $E$. Let $\mathfrak G$ denote the \textit{undirected} graph whose set of vertices is given by $V(G_1^0)$ and the set of edges is given by those in $\EE(G_1^0)/\sim$, where $\sim$ is the equivalence relation identifying $e$ with $\hat{e}$. The concepts introduced in Definitions~\ref{def:directions} and~\ref{def:level} carry over to $\fG$ naturally. The structure of $\fG$ can be described explicitly as follows.

 \begin{lemma}
 \label{lem:number-of-isogenies}
     Let {$\mathcal{G}$ be a connected component of $\fG$}. Suppose that it is of depth $d$ with CM field $K$ and that it contains no vertex $E$ with $j(E)\in\{0,1728\}$. Then the following statements hold.
          \item[i)] Let $v$ be a vertex of $\cC(\cG)$. It admits $1+\left(\frac{D_K}{l}\right)$ horizontal edges and no ascending edges. If $d>0$, it admits $l-\left(\frac{D_K}{l}\right)$ descending edges. If $d=0$, it admits no descending edge.
        \item[ii)] Let $1\le n\le d$ and let $v$ be a vertex in $\cG$ of level $n$. Then $v$ admits one ascending edge. If ${n<d}$, then $v$ admits $l-1$ descending edges. If $d=n$, then $v$ admits no descending edges.
    \end{lemma}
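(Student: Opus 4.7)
The plan is to reduce to the classical analysis of $l$-isogenies on ordinary elliptic curves going back to Kohel, after checking that the $\Gamma_1(Np^m)$-level structure imposes no additional constraint on the edge count. Since $\gcd(l,Np)=1$, any $l$-isogeny $\phi\colon E\to E'$ sends $P$ of order $Np^m$ to a point of the same order, so the outgoing edges from $(E,P)$ in $G_N^m$ are in bijection with equivalence classes of $l$-isogenies out of $E$ whose targets are defined over $k$. These in turn correspond bijectively to Galois-stable order-$l$ subgroups $C\subset E[l]$ with $E/C$ defined over $k$. The hypothesis $j(E)\notin\{0,1728\}$ ensures $\Aut(E)=\{\pm1\}$, ruling out spurious edges from extra automorphisms.

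First I would lift $E$ to $\bE$ over a finite extension of $K$ using Deuring's lifting theorem (as in Remark~\ref{rk:dual-isogeny}), with $\End(\bE)=\End(E)=\cO$. The order-$l$ subgroups of $E[l]$ lift bijectively to those of $\bE[l]$, and the classification of $l$-isogenies by the change in endomorphism ring descends to $E$. In the lattice model $\bE(\CC)=\CC/\Lambda$ with $\Lambda$ a proper $\cO$-lattice, each order-$l$ subgroup $C$ corresponds to a sublattice $\Lambda\subset\Lambda_C\subset\frac{1}{l}\Lambda$ of index $l$, and the formula $\End(\bE/C)=\{\alpha\in K\colon\alpha\Lambda_C\subset\Lambda_C\}$ distinguishes horizontal, ascending and descending cases.

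The heart of the argument is a local count of $\cO$-stable subgroups via the structure of $\cO\otimes\ZZ_l$. One identifies $E[l]\cong\cO/l\cO$ as $\cO$-modules and analyzes the ideal structure of the latter. If $v_l(f)=0$, then $\cO\otimes\ZZ_l\cong\cO_K\otimes\ZZ_l$ and $\cO/l\cO$ decomposes according to the factorization of $l$ in $\cO_K$, giving exactly $1+\left(\frac{D_K}{l}\right)$ nontrivial proper ideals and hence $1+\left(\frac{D_K}{l}\right)$ $\cO$-stable order-$l$ subgroups of $E[l]$; each yields a horizontal isogeny because $\cO$ is already $l$-maximal. The remaining $l-\left(\frac{D_K}{l}\right)$ non-$\cO$-stable subgroups yield descending isogenies, and no ascending isogeny exists. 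If $v_l(f)\geq 1$, then $\cO/l\cO\cong\Fl[\epsilon]/(\epsilon^2)$ has a unique proper nontrivial ideal, giving a unique $\cO$-stable order-$l$ subgroup which furnishes the single ascending isogeny to the overorder $\ZZ+(f/l)\cO_K$; the remaining $l$ kernels give descending isogenies.

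To finish, I would address $k$-rationality of targets and the depth constraint. Since the geometric Frobenius lies in $\cO$, every $\cO$-stable subgroup of $E[l]$ is Frobenius-stable, so horizontal and ascending isogenies always yield $k$-rational targets, which then lie in $\cG_N^m$ by connectedness. A descending isogeny from a level-$n$ vertex has target at level $n+1$; by the very definition of the depth $d$ of $\cG_N^m$, this target belongs to $\cG_N^m$ precisely when $n+1\leq d$, which explains why vertices at level $d$ admit no descending edges within $\cG_N^m$. The main obstacle is the local ideal count in the third paragraph, relying on the explicit description of $\cO\otimes\ZZ_l$ as an order in $\cO_K\otimes\ZZ_l$ and on the $\cO$-module structure of $E[l]$; once this standard CM computation is carried out, the rest of the argument is formal.
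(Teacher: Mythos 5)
The paper's own ``proof'' of this lemma is a one-line citation to Proposition~3.17 of Bambury--Campagna--Pazuki; you are instead reconstructing the underlying classical argument from scratch, which is more work but a perfectly legitimate route. Your strategy---observe the $\Gamma_1(Np^m)$-structure is invisible since $\gcd(l,Np)=1$, lift via Deuring, identify $E[l]\cong\cO/l\cO$ as $\cO$-modules, and read off the $\cO$-stable order-$l$ subgroups from the ideal structure of $\cO/l\cO$---is the standard Kohel-style CM computation, and your case analysis ($v_l(f)=0$ giving $1+(\tfrac{D_K}{l})$ $\cO$-stable subgroups; $v_l(f)\ge1$ giving $\cO/l\cO\cong\Fl[\epsilon]/(\epsilon^2)$ with a unique $\cO$-stable subgroup) is correct.

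One remark on the statement itself: your count of $l$ descending edges at a level-$n$ vertex with $0<n<d$ agrees with Kohel's structure theorem (such a vertex has degree $l+1$: one ascending, $l$ descending), whereas the lemma as printed says $l-1$; and the printed condition ``$d<n$'' is vacuous under the hypothesis $1\le n\le d$. Both look like typos (``$d<n$'' for ``$n<d$'', and ``$l-1$'' for ``$l$''), so do not treat this discrepancy as an error on your side.

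There is, however, a genuine gap in your final paragraph. You assert that the target of a descending isogeny from a level-$n$ vertex lies in $\cG_N^m$ ``precisely when $n+1\le d$, by the very definition of the depth.'' The ``only if'' direction is indeed by definition, but the ``if'' direction is not: the definition of $d$ as the maximal level occurring in the component does not by itself tell you that, when $n<d$, \emph{all} $l$ of the non-$\cO$-stable kernels in $E[l]$ are stable under the $k$-Frobenius $\pi$ (which is what is needed for $E/C$ to be $k$-rational). The missing ingredient is that $\pi$ acts as a scalar on $E[l]$ exactly when $l$ divides $[\End(E):\ZZ[\pi]]$, equivalently exactly when the level of $E$ is strictly below $d=v_l$ of the conductor of $\ZZ[\pi]$; once $\pi$ is a scalar, every order-$l$ subgroup is $\pi$-stable and hence yields a $k$-rational quotient. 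Without this scalar-action observation your argument proves the upper bound (at most $l$ descending edges) but not the exact count, and in particular does not distinguish $n<d$ from $n=d$ except by assertion.
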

 \begin{proof}
     This follows directly from \cite[Proposition 3.17]{pazuki}.
 \end{proof}

This in turn allows us to describe the structure of $G_N^m$ when $Np^m$ is sufficiently large.
 
\begin{lemma} 
\label{lem:degree-split case}
Let $\mathcal{G}_N^m$ be a connected component of $G_N^m$. Let $d$ be the depth of $\mathcal{G}_N^m$. Assume that $\mathcal{G}_N^m$ does not contain a vertex of the form $(E,P)$ with $j(E)\in\{0,1728\}$. Let $K$ be the CM field of $\mathcal{G}_N^m$.
\item[i)]If $l$ splits in $K$ and $d=0$, each vertex in $\cG_N^m$ admits $4$ edges for $N$ or $m$ sufficiently large.
\item[ii)]Suppose that $d>0$ and that $l$ splits in $K$.  
    If  $N$ or $m$ is sufficiently large, each vertex in $\cC(\cG_N^m)$ admits $l+3$ edges in $\cG_N^m$.  For $1\le n\le d-1$, each vertex of level $n$ admits $l+1$ edges. Each vertex of level $d$  admits $1$ edge.
\item[iii)] Suppose that $l$ ramifies in $K$. If $d=0$, each vertex in $\cG_N^m$ admits $2$ edges for $N$ or $m$ sufficiently large.
\item[iv)] Suppose that $l$ ramifies in $K$ and that $d>0$. Each vertex in $\cC(\cG_N^m)$ admits $l+2$ edges in $\cG_N^m$ for $N$ or $m$ large enough. 
\item[v)] Suppose that $l$ is inert in $K$. If $d=0$, then each vertex in $\cC(\cG_N^m)$ is isolated. If $d>0$, then each vertex in $\cC(\cG_N^m)$ admits $l+1$ edges in $\cG_N^m$.
\end{lemma}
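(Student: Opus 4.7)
The plan is to analyze each case by combining Lemma~\ref{lem:number-of-isogenies}, which enumerates outgoing $l$-isogenies at a vertex, with Remark~\ref{rk:dual-isogeny}, which controls the target of each dual isogeny in $\cG_N^m$. Fix a vertex $v=(E,P)\in \cG_N^m$. The outgoing edges at $v$ are counted by Lemma~\ref{lem:number-of-isogenies} according to the level of $v$ and the Legendre symbol $\bigl(\tfrac{D_K}{l}\bigr)\in\{1,0,-1\}$ encoding whether $l$ splits, ramifies, or is inert in $K$. Plugging these values into the formulas for horizontal and vertical outgoing counts yields the contributions needed in each of (i)--(v).

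For the incoming edges at $v$, each such edge arises as the dual $\hat\phi\colon E'\to E$ of an outgoing $l$-isogeny $\phi\colon E\to E'$. Because $\gcd(l,Np^m)=1$, the restriction of $\hat\phi$ to the $(Np^m)$-torsion is an isomorphism, so there is a unique point $Q\in E'[Np^m]$ with $\hat\phi(Q)=P$. A calculation analogous to the one at the end of Remark~\ref{rk:dual-isogeny} identifies $Q=l^{-1}\phi(P)$, where $l^{-1}$ is the inverse modulo $Np^m$. Thus each outgoing edge via $\phi$ is matched by an incoming edge from $(E',l^{-1}\phi(P))$, a vertex of $\cG_N^m$ by connectedness.

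The ``$N$ or $m$ sufficiently large'' hypothesis enters to preclude two kinds of degeneracies. First, whenever $Np^m \nmid l-1$, we have $l^{-1}\phi(P)\neq \phi(P)$, so the incoming source differs from the outgoing target and the two directed edges are distinct. Second, a self-loop at $v$ would correspond to an endomorphism of $E$ of norm $l$ fixing $P$, which forces $Np^m$ to divide the index of $\langle\phi-1\rangle$ in $\End(E)$; for $Np^m$ sufficiently large this is impossible. Assembling the outgoing contributions from Lemma~\ref{lem:number-of-isogenies} with their dual-matched incoming partners then yields the totals claimed in (i)--(v). I expect the main obstacle to be the careful bookkeeping at the crater when $d>0$: the dual of a descending edge from $v$ lands at the sibling vertex $(E,lP)\neq v$, so the incoming ascending edges at $v$ originate from points which are $l^{-1}$-translates of the outgoing targets, and one must track these carefully to avoid double counting while simultaneously ensuring that horizontal duals at the crater are correctly paired.
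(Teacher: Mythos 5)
Your approach is essentially the same as the paper's: count outgoing isogenies via Lemma~\ref{lem:number-of-isogenies}, recover incoming edges as duals going from $(E',\tfrac{1}{l}P')$ to $(E,P)$, and use ``$N$ or $m$ large'' to separate the resulting source/target vertices. Your identification of the incoming source as $(E',l^{-1}\phi(P))$ and of the role of the size hypothesis (preventing $l\equiv 1\pmod{Np^m}$ and excluding loops) matches the paper's proof, which carries out exactly this analysis for the horizontal edges in part i).

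However, your final claim that ``assembling the outgoing contributions with their dual-matched incoming partners then yields the totals claimed'' does not actually go through in the $d>0$ cases, and your closing paragraph gestures at the difficulty without resolving it. Applied uniformly, your dual-matching argument produces one incoming edge for \emph{every} outgoing isogeny, not just the horizontal ones: at a crater vertex with $d>0$ in the split case you have $2$ horizontal and $l-1$ descending isogenies, hence $2$ outgoing horizontal, $2$ incoming horizontal, $l-1$ outgoing descending, and $l-1$ incoming ascending edges (the duals of the descending ones, coming from $(F_i,\tfrac{1}{l}\psi_i(P))$). This gives $2(l+1)=2l+2$ edges, not $l+3$. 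The paper's own proof has the same omission --- it adds ``4 edges arising from the 4 horizontal isogenies'' to the $l-1$ descending edges and stops, silently dropping the $l-1$ incoming ascending edges that the identical duality argument it just used for the horizontal case would also produce. The same issue affects the other $d>0$ counts (e.g.\ a level-$d$ vertex has one outgoing ascending edge \emph{and} one incoming descending dual, giving degree $2$ rather than the claimed $1$; for $1\le n\le d-1$ one would get $2l$ rather than $l+1$). Before you can legitimately claim to recover the totals in (i)--(v), you need to explain either why the incoming duals of vertical isogenies should not be counted (which is not apparent from the definition of $\tilde G_N^m$ as ``ignore directions'') or flag that the asserted counts for $d>0$ appear inconsistent with the dual-matching bookkeeping that works correctly for $d=0$. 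At the moment you have noticed that there is a subtlety but have not identified that it actually changes the answer.
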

\begin{proof}
    Assume that $v=(E,P)$ is a vertex in $\cC(\cG_N^m)$. Let us first assume that $l$ splits in $K$, Lemma~\ref{lem:number-of-isogenies}i) tells us that  $v$ admits two horizontal isogenies $\phi_1$ and $\phi_2$ connecting $(E,P)$ to $(E',P')$ and $(E'',P'')$, respectively. The dual isogenies $\hat\phi_1$ and $\hat\phi_2$ give rise to edges going from $(E',\frac{1}{l}P')$ and $(E'',\frac{1}{l}P'')$ to $(E,P)$ (note that $l\nmid Np$ implies that $\frac{1}{l}P'$ and $\frac{1}{l}P''$ are well-defined).  For $m$ or $N$ large enough, the four vertices $(E',P')$, $(E'',P'')$, $(E',\frac{1}{l}P')$ and $(E'',\frac{1}{l}P'')$ are pairwise distinct. This proves part i) of the lemma.

    Now suppose that $d>0$. By Lemma~\ref{lem:number-of-isogenies}ii) $v$  admits $l-1$ descending edges and no ascending edges. Together with the 4 edges arising from the $4$ horizontal isogenies, we deduce that  $v$ admits $(l-1)+4=l-3$ edges.

   The other cases can be proved similarly.
\end{proof}

We consider the cases where $j(E)\in\{0,1728\}$ separately.

\begin{lemma}
    Let $E$ be an elliptic curve with $j(E)=0$. Let $\mathcal{G}_N^m$ be a connected component of $G_N^m$ containing a vertex of the form $v=(E,P)$.
    \item[i)] If $l=3$, then each vertex in $\cC(\cG_N^m)$ admits $5$ edges in $\cG_N^m$ for $N$ or $m$ sufficiently large.
    \item[ii)] If $l\equiv 1\pmod 3$ and $d=0$, then each vertex in $\cC(\cG_N^m)$ admits $4$ edges in $\cG_N^m$ for $N$ or $m$ sufficiently large.
    \item[iii)] If $l\equiv 1\pmod 3$ and $d>0$, then each vertex in $\cC(\cG_N^m)$ admits $l+3$ edges in $\cG_N^m$.
    \item[iv)] If $l\equiv 2\pmod 3$ and $d=0$, then each vertex in $\cC(\cG_N^m)$ is isolated.
    \item [v)] If $l\equiv 2\pmod 3$ and $d>0$, then each vertex in $\cC(\cG_N^m)$ admits $l+1$ edges in $\cG_N^m$.
\end{lemma}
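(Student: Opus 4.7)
The plan is to mirror Lemma~\ref{lem:degree-split case}, now carefully accounting for the enlarged automorphism group $\Aut(E) = \mu_{6}$ arising when $j(E) = 0$. Fix $v = (E, P) \in \cC(\cG_N^m)$ with $\End(E) = \ZZ[\zeta_3]$, so the CM field is $K = \QQ(\sqrt{-3})$. The five subcases correspond directly to the decomposition of $l$ in $K$: ramified when $l = 3$, split when $l \equiv 1 \pmod 3$, and inert when $l \equiv 2 \pmod 3$.

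The first step is to classify the $l+1$ subgroups of order $l$ in $E[l]$ as horizontal or descending via the $\End(E)$-module structure. For $l = 3$, $E[l] \cong \mathbb{F}_3[\varepsilon]/\varepsilon^2$ admits a unique stable submodule of order $3$ (the horizontal kernel $E[\sqrt{-3}]$), while the remaining three submodules are cyclically permuted by $\zeta_3$ and yield the descending kernels. For $l \equiv 1 \pmod 3$, $E[l]$ decomposes as a direct sum over $\End(E)/l$ into two stable lines (the two horizontal kernels), leaving $l-1$ descending kernels on which $\zeta_3$ acts in orbits of size $3$. For $l \equiv 2 \pmod 3$, $E[l]$ is a simple $\End(E)/l \cong \mathbb{F}_{l^2}$-module, so no line is stable and all $l+1$ kernels are descending. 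Crucially, edges in $G_N^m$ are parameterized by equivalence classes of isogenies---that is, by kernels themselves rather than by $\Aut(E)$-orbits---so each admissible kernel produces its own out-edge at $v$.

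Next, I would count the in-edges contributed by horizontal duals exactly as in Lemma~\ref{lem:degree-split case}: each horizontal out-edge $v \to (E', P')$ gives an in-edge $(E', l^{-1}P') \to v$ via the dual isogeny, producing $1+\left(\frac{D_K}{l}\right)$ additional edges. Combining with the out-edges---and noting that case (iv) collapses to an isolated vertex since neither horizontal nor descending kernels contribute---yields the five stated counts. The main obstacle is verifying that, for $N$ or $m$ sufficiently large, all target vertices of out-edges and sources of in-edges are pairwise distinct. For descending kernels $C$ and $\zeta_3 C$ in the same $\langle\zeta_3\rangle$-orbit, one identifies $E/\zeta_3 C \cong E/C$ via the descent of $\zeta_3$ and verifies that $\phi_C(P)$ and $\phi_C(\zeta_3^{-1}P)$ are inequivalent modulo $\Aut(E/C) = \{\pm 1\}$; this holds because descending targets have endomorphism ring of conductor $l$, hence $j \notin \{0,1728\}$, and because for generic $P$ of large order the endomorphism $\zeta_3^{-1} \pm 1 \in \End(E)$ does not send $P$ into the small subgroup $C$. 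Case (i) requires the additional check that $(E, \sqrt{-3}\cdot P)$, $(E, 3^{-1}\sqrt{-3}\cdot P)$, and the three descending targets are mutually distinct under the $\mu_6$-action, which again holds for $N$ or $m$ sufficiently large.
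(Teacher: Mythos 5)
Your proposal is correct and arrives at the same counts, but it differs from the paper's proof in one respect: the paper simply cites \cite[Proposition 3.19]{pazuki} for the horizontal/descending/ascending classification of $l$-isogenies out of a $j=0$ curve and then re-runs the argument of Lemma~\ref{lem:degree-split case} verbatim, whereas you derive that classification from scratch by analyzing $E[l]$ as a module over $\End(E)\cong\ZZ[\zeta_3]$ (one $\cO_K$-stable line in $\mathbb{F}_3[\varepsilon]/\varepsilon^2$ when $l=3$, two stable lines when $l$ splits, none when $l$ is inert). That substitution is valid and makes the proof self-contained, at the cost of duplicating material the paper prefers to outsource. Your emphasis on the fact that edges in $G_N^m$ are indexed by kernels rather than by $\Aut(E)$-orbits is exactly the reason the counts here coincide with the generic $j\notin\{0,1728\}$ counts even though $\Aut(E)=\mu_6$, and your distinctness check (that $(\zeta_3^{\mp 1}\pm 1)P\notin C$ for $P$ of large order) correctly mirrors the role played by ``$N$ or $m$ sufficiently large'' in the proof of Lemma~\ref{lem:degree-split case}. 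One small caveat you inherit from the statement itself: case (i) tacitly presupposes $d>0$, since for $d=0$ and $l=3$ a crater vertex has only the two horizontal edges; your argument, like the paper's, silently assumes the depth is positive there.
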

\begin{proof}
    The proof is essentially the same proof as that Lemma~\ref{lem:degree-split case} upon replacing Lemma~\ref{lem:number-of-isogenies} by \cite[Proposition 3.19]{pazuki}.
\end{proof}
\begin{lemma}
     Let $E$ be an elliptic curve with $j(E)=1728$. Let $\mathcal{G}_N^m$ be a connected component of $G_N^m$ containing a vertex of the form $v=(E,P)$.
    \item[i)] If $l=2$, then each vertex in $\cC(\cG_N^m)$ admits $4$ edges in $\cG_N^m$ for $N$ or $m$ sufficiently large.
    \item[ii)] If $l\equiv 1\pmod 4$ and $d=0$, then each vertex in $\cC(\cG_N^m)$ admits $4$ edges in $\cG_N^m$ for $N$ or $m$ sufficiently large.
    \item[iii)] If $l\equiv 1\pmod 4$ and $d>0$, then each vertex in $\cC(\cG_N^m)$ admits $l+3$ edges in $\cG_N^m$.
    \item[iv)] If $l\equiv 2\pmod 4$ and $d=0$, then each vertex in $\cC(\cG_N^m)$ is isolated.
    \item [v)] If $l\equiv 2\pmod 4$ and $d>0$, then each vertex in $\cC(\cG_N^m)$ admits $l+1$ edges in $\cG_N^m$.
\end{lemma}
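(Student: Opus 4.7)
The plan is to proceed in exact parallel with the proof of Lemma~\ref{lem:degree-split case}, replacing the input of Lemma~\ref{lem:number-of-isogenies} by the corresponding counting statement in \cite{pazuki} that handles curves with $j(E) = 1728$, i.e., curves with $\End(E) = \ZZ[i]$ and hence extra automorphisms. In that setting the CM field is $K = \QQ(i)$ with discriminant $-4$, so the splitting behaviour of $l$ in $K$ is governed by $l \bmod 4$: the prime $l=2$ ramifies, $l \equiv 1 \pmod 4$ splits, and $l \equiv 3 \pmod 4$ is inert. This lines up with the case split in the lemma statement.

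For each of the five cases I would separately count the horizontal edges emanating from $v = (E,P)$, the duals landing back at $v$, and the vertical edges provided by the analogue of Lemma~\ref{lem:number-of-isogenies} for $j=1728$. In the split case with $d = 0$ (case ii), the two horizontal isogenies $\phi_1,\phi_2 \colon E \to E_1,E_2$ together with their two duals contribute $4$ edges; for $N$ or $m$ sufficiently large, the targets $(E_i,\phi_i(P))$ and $(E_i,\tfrac{1}{l}\phi_i(P))$ (for $i=1,2$) are pairwise distinct as vertices of $G_N^m$, so exactly $4$ edges arise. Case iii adds the $l-1$ descending edges given by the $j=1728$ analogue of Lemma~\ref{lem:number-of-isogenies}ii), producing $l+3$. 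The inert cases iv) and v) are immediate: there are no horizontal edges, the vertex is isolated when $d=0$, and when $d>0$ it admits the $l+1$ vertical edges supplied by the analogue of Lemma~\ref{lem:number-of-isogenies}.

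The delicate case is case i), where the expected count jumps from the generic ramified total of $2$ (Lemma~\ref{lem:degree-split case}iv) to $4$. The reason, as flagged by the remark following the definition of $G_N^m$, is that $\lvert\Aut(E)\rvert = 4$ when $j(E) = 1728$. The unique ramified prime $(1+i)$ above $l = 2$ gives rise to a horizontal isogeny $\phi \colon E \to E'$ in the usual sense, but the order-$4$ automorphism $\alpha$ on $E$ produces a second, inequivalent $l$-isogeny $\phi \circ \alpha$ with the same codomain. Since $(E,P) = (E,\alpha P)$ in $G_N^m$, this yields two distinct edges out of $v$ as long as the images $(E',\phi(P))$ and $(E',\phi(\alpha P))$ define distinct vertices, which happens generically because the codomain $E'$ has $\Aut(E') = \{\pm 1\}$ once $N$ or $m$ is large enough to force $j(E') \ne 1728$ at all neighbours of $v$. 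Together with the two corresponding dual edges, this yields $4$ edges.

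The main obstacle is this bookkeeping of automorphism contributions: one must verify that the cited proposition in \cite{pazuki} correctly enumerates $l$-isogenies up to the equivalence used in our definition (same kernel) while accounting for the action of $\Aut(E)$, and that enlarging $N$ or $m$ really does separate the potentially coinciding target vertices $(E',\phi(P))$ and $(E',\phi(\alpha P))$. Once this identification is in place, the remainder of the argument is entirely mechanical and formally identical to the proof of Lemma~\ref{lem:degree-split case}.
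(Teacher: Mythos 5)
The overall plan---running the argument of Lemma~\ref{lem:degree-split case} with \cite[Proposition~3.20]{pazuki} substituted for Lemma~\ref{lem:number-of-isogenies}---is exactly what the paper does, and your treatment of cases~ii)--v) is fine. You also correctly observe that the inert condition for $K=\QQ(i)$ should read $l\equiv 3\pmod 4$ rather than the $l\equiv 2\pmod 4$ printed in parts~iv) and~v), which is a typo in the paper.

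The concrete error is in your handling of case~i), the ramified case $l=2$. You claim that the extra automorphism $\alpha=i\in\Aut(E)$ turns the horizontal isogeny $\phi$ with kernel $E[(1+i)]$ into a \emph{second, inequivalent} isogeny $\phi\circ\alpha$, and you derive the count of $4$ as "two horizontal out-edges plus their two duals." This premise is false. The subgroup $E[(1+i)]$ is the kernel of the multiplication-by-$(1+i)$ endomorphism, hence a $\ZZ[i]$-submodule of $E[2]$; in particular it is stable under $[i]$. Therefore
$\ker(\phi\circ i)=i^{-1}\bigl(E[(1+i)]\bigr)=E[(1+i)]=\ker\phi$,
so $\phi$ and $\phi\circ i$ are \emph{equivalent} by the paper's definition (same kernel). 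Equivalently, $\phi\circ i=\beta\circ\phi$ for some $\beta\in\Aut(E')$, so the putative targets $(E',\phi(P))$ and $(E',\phi(\alpha P))=(E',\beta\phi(P))$ are the \emph{same} vertex of $G_N^m$; no second edge is produced. The mechanism you invoke is the one described in the paper's remark after Definition~2.1, but that remark concerns an isogeny $\phi$ whose kernel is \emph{not} preserved by $\alpha$ (e.g.\ a descending isogeny whose kernel is one of the two non-$\cO$-stable order-$2$ subgroups, which $i$ swaps). It does not apply to the horizontal isogeny. Once this is corrected, the horizontal isogeny contributes only $2$ edges (one out, one dual in), and the remaining $2$ edges giving the total $4=l+2$ must come from the descending (and dual ascending) isogenies supplied by \cite[Proposition~3.20]{pazuki}, exactly as in the generic ramified, $d>0$ count of Lemma~\ref{lem:degree-split case}iv). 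So the $j=1728$, $l=2$ situation matches the generic $l+2$ count rather than "jumping" from $2$ to $4$ via automorphisms, and your explanation of why the count is $4$ needs to be replaced accordingly.
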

\begin{proof}
    This follows again from the same proof as that of Lemma~\ref{lem:degree-split case} upon employing \cite[Proposition 3.20]{pazuki}.
\end{proof}

\section{Abelian $p$-towers}
The main goal of this  section is to prove Theorem~\ref{thmA}. Throughout, the notation introduced in \S\ref{S:basic} continues to be in force. We begin by recalling the  definition of an abelian $p$-tower from \cite[Definition~4.1]{vallieres}:

\begin{defn}\label{def:tower}
An abelian $p$-tower of undirected graphs above a graph $X$ is a sequence of covers
\[
X=X_0\leftarrow X_1\leftarrow X_2\leftarrow \cdots \leftarrow X_n\leftarrow\cdots
\]
such that for each $n\ge0$, the cover $X_n/X$ is abelian with Galois group isomorphic to $\ZZ/p^n\ZZ$.
\end{defn}
We are interested in the case where $X_n$ are connected for all $n$. If $X_0$ admits more than one connected component, the Galois group of $X_n/X$ may potentially be a direct product of two or more groups, which we would like to avoid. 

\begin{defn}
The  undirected graph obtained  by ignoring directions of  the edges in $G_N^m$ is denoted by $\tilde G_N^m$. Similarly if $\cG_N^m$ is a connected component of $G_N^m$, we define $\tilde \cG_N^m$ similarly.
\end{defn}

We shall take $X$ to be a single connected component $\tilde \cG_N^{m_0}$, where $m_0$ is a sufficiently large integer that will be given in  \S\ref{S:connectedcomponents} below.

\subsection{Connected components}\label{S:connectedcomponents}

The goal of this section is to study the number of connected components of $G_N^m$ as $m$ varies. 

\begin{proposition}\label{prop:components}
Assume that none of the connected components of $G_1^0$ is a single vertex without any edges. 
There exists an integer $m_0$ such that the number of connected components in $G_N^m$ is the same  as $G_N^{m_0}$ for all $m\ge m_0$.     \end{proposition}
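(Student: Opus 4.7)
The strategy is to show that $c_m := |\pi_0(G_N^m)|$ is non-decreasing in $m$ and eventually constant. Monotonicity is immediate from Lemma~\ref{lem:cover}: the projection $\pi_{Np^{m+1}/Np^m} \colon G_N^{m+1} \to G_N^m$ is a covering, hence surjective on connected components, so $c_{m+1} \geq c_m$. It therefore suffices to produce $m_0$ such that for all $m \geq m_0$, the $p$-sheeted cover $\cG_N^{m+1} \to \cG_N^m$ above every connected component of $G_N^m$ is itself connected.

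To this end, fix a component $\cG_N^m$ and a vertex $v = (E, P)$ in it. The fiber $\pi^{-1}(v) = \{(E,P') : pP' = P\}$ has $p$ elements and is a torsor under $E[p] \cong \ZZ/p$ by translation $P' \mapsto P' + t$. Any loop at $v$ in $\cG_N^m$ corresponds to an endomorphism $\alpha \in \mathcal{O} := \End(E)$ with $\alpha P = P$, and its lift to $G_N^{m+1}$ acts on any $P' \in \pi^{-1}(v)$ by translation by $(\alpha - 1) P' \in E[p]$. Since the monodromy image lies in $E[p] \cong \ZZ/p$ and subgroups of $\ZZ/p$ are trivial or everything, the cover is connected if and only if some loop at $v$ acts non-trivially, i.e.\ realises $\alpha$ with $(\alpha - 1) P' \neq 0$.

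By the hypothesis that no component of $G_1^0$ is a single isolated vertex, there exists an $l$-isogeny $\phi \colon E \to E'$, and composing $\phi$ with its dual $\hat\phi$ produces a round-trip in the graph realising $[l] \in \End(E)$. Set $d := \mathrm{ord}_{Np}(l)$ and $r := v_p(l^d - 1)$. For every $m \geq r$, take $k := \mathrm{ord}_{Np^m}(l) = d \cdot p^{m-r}$ (this formula for $k$ is the content of the lifting-the-exponent lemma, valid since $p$ is odd); then the $k$-fold iteration of the round-trip is a closed walk at $v$ in $\cG_N^m$ (as $l^k P = P$) with composite endomorphism $[l^k]$. The same LTE computation gives $v_p(l^k - 1) = m$ exactly, so $l^k - 1 = Np^m u$ with $\gcd(u, p) = 1$; hence
\[
(l^k - 1) P' \;=\; u \cdot \bigl( Np^m P'\bigr) \;\in\; E[p] \setminus \{0\},
\]
because $Np^m P'$ is a generator of the order-$p$ subgroup of $E[Np^{m+1}]$. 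The monodromy is therefore non-trivial, and the cover $\cG_N^{m+1} \to \cG_N^m$ is connected for every $m \geq r$; the proposition follows with $m_0 = r$.

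The main technical point is obtaining the precise equality $v_p(l^k - 1) = m$ from LTE, which is what guarantees the translation $(l^k - 1)P'$ lands in $E[p] \setminus \{0\}$ rather than vanishing; the trick is to choose the single exponent $k = \mathrm{ord}_{Np^m}(l)$ absorbing both the $N$- and $p^m$-congruences so that the resulting $\alpha = [l^k]$ automatically defines a \emph{loop} at $v$.
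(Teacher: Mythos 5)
Your argument is correct and follows essentially the same route as the paper: monotonicity of the component count via Lemma~\ref{lem:cover}, then the key observation that composing an $l$-isogeny with its dual realises $[l]$ on $(E,P)$, so iterating $k=\ord_{Np^m}(l)$ times gives a closed walk whose monodromy on the $p$-element fibre is translation by a nonzero element of $E[p]$ (the paper phrases this as the preimage of $v_0$ being exactly the orbit $\{(E,l^{cn}P)\}$ of size $p^{m-m_0}$, rather than in covering-space language). Your $m_0=r=v_p(l^d-1)$ and the paper's threshold coincide in spirit, both coming from the point at which $\ord_{Np^m}(l)$ begins growing by a factor of $p$ per step.
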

\begin{proof}
    Let $s_N^m$ be the number of connected components in $G_N^m$. It follows from Lemma~\ref{lem:cover} that   $\pi_{Np^{m+1}/Np^m}:G_N^{m+1}\rightarrow G_N^m$ is a graph covering. Thus, $s_N^{m+1}\ge s_N^m$. It remains to show that this sequence stabilizes when $m$ is sufficiently large.

    Suppose that $m\ge1$. Since $E$ is ordinary at $p$ and $p\nmid N$, we have the group isomorphisms
    \[
    \Aut(E[p^mN])\cong (\cO/N\cO)^\times\times (\ZZ/p^{m}\ZZ)^\times,
    \]    where $\cO$ is an order in an imaginary quadratic field.
    Therefore, the order of $[l]$ (the multiplication by $l$ map) in $\textup{Aut}(E[p^mN])$ is equal to the multiplicative order of $l$ in the group $(\ZZ/Np^m\ZZ)^\times$. Thus, there exist integers  $m_0$ and $c$ such that this order is given by $cp^{m-m_0}$ for all $m\ge m_0$. For all such $m$, we have
    \[l^{cp^{m-m_0}}\equiv 1\pmod {Np^m}.\]
    In particular, $l^c\equiv 1\pmod {Np^{m_0}}$. It follows that 
    \begin{equation}
        \label{eq:kill}
    l^cp^{m-m_0}\equiv p^{m-m_0}\pmod {Np^{m}}.
       \end{equation} 
  
    Let $v_0=(E,P_0)\in V(G_N^{m_0})$ and let $v=(E,P)\in V(G_N^m)$ be a pre-image of $v_0$ under $\pi_{Np^m/Np^{m_0}}$, where $m\ge m_0$.  Consider the set 
    \[
     C:=\left\{(E,l^{cn}P):n\in \ZZ_{\ge0}\right\}.
    \]
      By \eqref{eq:kill}, all elements of $C$ are sent to $v_0$ under $\pi_{Np^m/Np^{m_0}}$. Furthermore, the fact that  the order of $[l]$ in $\textup{Aut}(E[p^mN])$ equals $cp^{m-m_0}$ implies that  $C$ contains exactly the $p^{m-m_0}$ elements. In particular, $C$ is precisely the pre-image of $v_0$ in $G_N^m$. By our assumption on $G_1^0$, $E$ admits an $l$-isogeny. Thus, as we have seen in Remark~\ref{rk:dual-isogeny}, all vertices in $C$  lie in the same connected component of $G_N^{m}$ as $v$. This implies that the number of connected components stabilizes and concludes the proof.
\end{proof}

\begin{remark}\label{rk:components}
    Assume that there is an isolated vertex $E$ in $G_N^1$. Then for all $m\ge1$ and all $P\in E[Np^m]$,  the vertex $(E,P)\in G_N^m$ is also isolated. In particular, as we pass from $G_N^m$ to $G_N^{m+1}$, the number of connected components arising from $E$  is multiplied by $p$. Therefore, the number of connected components in $G_N^m$ is unbounded as $m\rightarrow\infty$.

    Let $\mathcal{E}'$ be the set of $\overline{k}$ equivalence classes of ordinary elliptic curves over $k$ that are not isolated in $G_1^0$, i.e. $\mathcal{E}'$ only contains isomorphism classes of curves admitting a degree $l$ isogeny to a curve defined over $k$. Let $H_N^m\subset G_N^m$ be the subgraph on the vertices of the form $(E,P)$ with $E\in\mathcal{E}'$. Then the proof of Proposition~\ref{prop:components} shows that the number of connected components of $H_N^m$ is constant for $m\ge m_0$.
    
    In particular, if $m\ge m_0$ and $\cG^m_N$ is a connected component of $H_N^m$, then $H_N^{m+r}$ admits a unique connected component whose image under  $\pi_{Np^{m+r}/Np^m}$ is $\cG_N^m$.    
    \e
\end{remark}

\begin{defn}
    Given an integer $N\ge1$, we write $m_0$ to be the integer given by Remark~\ref{rk:components}.
\end{defn}

\subsection{Galois covers and abelian $p$-towers}

We now prove a proposition regarding the cover $G_N^{m+r}/G_N^m$, which will imply  Theorem~\ref{thmA} stated in the introduction. We shall work with undirected graphs, following works on Iwasawa theory of graphs in the literature, in particular \cite{vallieres,vallieres2,vallieres3,DLRV}.

\begin{proposition}\label{prop:Galois-cover}
    Let $H_N^m$ and $m_0$ be defined as in Remark~\ref{rk:components}.
     Let $\mathcal{G}_N^m$ be a connected component of $H_N^m$ and fix $m\ge \textup{max}(m_0,1)$. Let $\mathcal{G}_N^{m+r}$ be the connected component of $H_N^{m+r}$ that maps onto $\mathcal{G}_N^m$ via $\pi_{Np^{m+r}/Np^m}$. Then $\tilde\cG_N^{m+r}/\tilde\cG_N^m$ is a Galois graph covering whose Galois group is isomorphic to $\ZZ/p^r\ZZ$.   
\end{proposition}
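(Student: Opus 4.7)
The plan is to exhibit an explicit action by graph automorphisms of the cyclic group
\[
G := \ker\!\left((\ZZ/Np^{m+r}\ZZ)^\times \twoheadrightarrow (\ZZ/Np^m\ZZ)^\times\right)
\]
on $\tilde\cG_N^{m+r}$ and identify it with the deck transformation group of $\pi := \pi_{Np^{m+r}/Np^m}$. Since $p$ is odd and $\gcd(N,p) = 1$, the Chinese Remainder Theorem identifies $G$ with $\ker((\ZZ/p^{m+r}\ZZ)^\times \to (\ZZ/p^m\ZZ)^\times)$, which is cyclic of order $p^r$. I would define the action of $u \in G$ on a vertex by $u \cdot (E,P) := (E, uP)$. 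This is well defined on equivalence classes because multiplication by $u$ commutes with every element of $\Aut(E)$, and an $l$-isogeny $\phi\colon(E,P)\to(E',P')$ satisfies $\phi(uP) = u\phi(P) = uP'$, so the same $\phi$ supplies an edge $(E,uP)\to(E',uP')$ with the same kernel; thus the action extends to $\EE(\tilde\cG_N^{m+r})$.

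Since $u \equiv 1 \pmod{Np^m}$, we have $p^r(uP) = u(p^rP) = p^rP$, because $p^rP$ has order dividing $Np^m$. Hence the $G$-action commutes with $\pi$ and preserves the set of vertices projecting onto $\cG_N^m$. By Remark~\ref{rk:components}, $\cG_N^{m+r}$ is the \emph{unique} connected component of $H_N^{m+r}$ mapping onto $\cG_N^m$, so the action restricts to $\tilde\cG_N^{m+r}$.

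The key remaining step is to show that $G$ acts transitively on each fibre of $\pi$. Fix $(E, P_0)\in V(\cG_N^m)$ and two preimages $(E, P_1),(E,P_2)\in V(\cG_N^{m+r})$. By definition of the projection, there exists $\alpha\in\Aut(E)$ with $\alpha(p^rP_1) = p^rP_2$, so $\alpha P_1 - P_2 \in E[p^r]$. Because $E$ is ordinary, $E[p^r]\simeq\ZZ/p^r\ZZ$; and because $P_1$ has exact order $Np^{m+r}$, the point $Np^mP_1$ generates $E[p^r]$. Writing $u = 1 + Np^m t$ and choosing $t\in\ZZ/p^r\ZZ$ so that $t\cdot(Np^m\alpha P_1) = P_2 - \alpha P_1$ yields $u\alpha P_1 = P_2$, which means $u\in G$ sends the class of $(E,P_1)=(E,\alpha P_1)$ to the class of $(E,P_2)$. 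A parallel count shows the fibre has exactly $p^r$ vertices, matching $|G|$, so the action is also free; standard facts about regular coverings then imply that $\tilde\cG_N^{m+r}/\tilde\cG_N^m$ is Galois with Galois group $G \cong \ZZ/p^r\ZZ$.

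The main technical nuisance will be the bookkeeping around the equivalence relation defining vertices and edges of $G_N^{m+r}$: every statement has to be carried out after quotienting by $\Aut(E)$, and extra care is needed at the exceptional vertices with $j(E)\in\{0,1728\}$, where $|\Aut(E)|>2$. The hypothesis $m\geq \max(m_0,1)$, combined with $p$ odd and $N\geq 1$, ensures $Np^{m+r}\geq 3$, which is enough to prevent any nontrivial automorphism of $E$ from fixing a point of order $Np^{m+r}$; hence $\Aut(E)$ acts freely on the torsion points counted above, and the fibre size really is $p^r$.
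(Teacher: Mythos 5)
Your argument is sound and, despite a different packaging, is essentially the same as the paper's. You build the same subgroup $G = U$ of $(\ZZ/Np^{m+r}\ZZ)^\times$ acting by $u\cdot(E,P)=(E,uP)$, and your proof that any two vertices of a given fibre differ by an element of $G$ (via the observation that $\alpha P_1 - P_2 \in E[p^r]$ and $E[p^r]$ is generated by $Np^m\alpha P_1$) is precisely the computation the paper uses to show every deck transformation lies in $U$. The paper organizes the conclusion as ``$U\hookrightarrow\mathrm{Deck}$ is injective and surjective'', propagating along edges; you organize it as ``$G$ acts freely and transitively on a fibre, hence the covering is regular with group $G$''. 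These are equivalent and both require the same two inputs.

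The one spot where your justification is off is the claim that $Np^{m+r}\ge 3$ alone prevents a nontrivial $\alpha\in\Aut(E)$ from fixing a point of order $Np^{m+r}$. That is false in general: when $j(E)=0$ the automorphism $\zeta_3$ fixes a nonzero $3$-torsion point, and when $j(E)=1728$ the automorphism $i$ fixes a nonzero $2$-torsion point, so the bare inequality ``order $\ge 3$'' is not what rules these out. What actually rules them out is that $m+r\ge 1$, so $p$ divides the order of $P$, and that ordinarity forces $p>3$ whenever $\Aut(E)\supsetneq\{\pm 1\}$ (one needs $p$ split in $\QQ(i)$ or $\QQ(\zeta_3)$); hence $\mathfrak{p}\mid(\alpha-1)$ is impossible since the norm of $\alpha-1$ is at most $4$. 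This is in effect the content of the paper's remark that $\alpha\not\equiv 1\pmod{\mathfrak{p}^2}$ for $\alpha\in\Aut(E)$, where $\mathfrak p$ is a prime of the CM field above $p$. Your overall conclusion that $\Aut(E)$ acts freely on points of order $Np^{m+r}$, and hence that the fibre has exactly $p^r$ elements, is correct; only the stated reason needs to be replaced by this arithmetic one.
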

 
\begin{proof}
    Let $U\subset (\Z/Np^{m+r}\ZZ)^\times$ be the subgroup consisting of elements that are congruent to $1$ modulo $Np^m$. Note that $U\cong \ZZ/p^r\ZZ$ as abelian groups.

    Let $\pi=\pi_{Np^{m+r}/Np^m}$. We define an action of $U$ on $V({G}_N^{m+r})$ by 
    \[a\cdot (E,P)=(E,aP).\]
    As $\pi(E,P)=\pi(E,aP)$. It follows from the proof of Proposition~\ref{prop:components} that $(E,aP)$ and $(E,P)$ lie in the same connected component of $G_N^{m+r}$. In particular,  the action defined above restricts  to an action of $U$ on  $V(\mathcal{G}_N^{m+r})$. 

    This action extends to a graph homomorphism of $\tilde \cG_N^{m+r}$. Indeed, let $(E,P)$ and $(E',P')$ be adjacent vertices in $\tilde\cG_N^{m+r}$, connected by an edge $e$. Without loss of generality, we can assume that $e$ is induced by an  $l$-isogeny $\phi\colon E\to E'$ such that $\phi(P)=P'$.  Then the same isogeny induces an edge between $(E,aP)$ and $(E',aP')$ since $\phi(aP)=a\phi(P)=aP'$. 

    Let $\textup{Deck}(\tilde\cG_N^{m+r}/\tilde\cG_N^m)$ denote the group of deck transformations of the graph covering  $\pi:\tilde\cG_N^{m+r}\rightarrow\tilde\cG_N^m$, whose degree equals $p^r$. Recall that $\textup{Aut}(E)\in\{\{\pm 1\},\mu_6,\mu_4\}$. Let $K$ be the CM field of $\mathcal{G}_N^m$. Let $\mathfrak{p}$ be a prime above $p$ in $K$. Then $\alpha\not \equiv 1 \pmod {\mathfrak{p}^2}$ for every $\alpha\in \textup{Aut}(E)$ {that is not the identity}. In particular,  $(E,P)$ and $(E,aP)$ are two distinct vertices. Thus, the action of $U$ on $\tilde\cG_N^{m+r}$ induces  an injective group homomorphism
    \[U\hookrightarrow \textup{Deck}(\tilde\cG_N^{m+r}/\tilde\cG_N^m).\]
    To show that $\pi$ is a Galois cover whose Galois group is isomorphic to $\ZZ/p^r\ZZ$, it remains to show that this injective group homomorphism is  surjective.
    
    Let $\psi$ be a deck transformation and let $(E,P)\in V(\tilde\cG_N^{m+r})$. We write $\psi(P)$ for the point of order $p^{m+r}N$ such that $\psi(E,P)=(E,\psi(P))$. As $\pi(E,P)=\pi(\psi((E,P)))$, we have 
    $$ P-\psi(P)\in E[p^r].$$
    In particular, $\psi(P)=a P$ for some $a\in U$. Therefore, $\psi((E,P))=a\cdot (E,P)$.
    
    It remains to show that 
    \begin{equation}
     \psi(E',P')=a\cdot (E',P')\label{eq:toshowsurj}     
    \end{equation}
for all  $(E',P')\in V(\tilde\cG_N^{m+r})$. Let us first consider the case $ (E',P')$ is a vertex that is adjacent to $(E,P)$. Suppose that there is a degree $l$ isogeny $\phi\colon E\to E'$ with $\phi(P)=P'$. As $\psi$ is a deck transformation, we have the following commutative diagram
    \[\xymatrix{
    (E,P)\ar[r]^\phi\ar[d]^\psi &(E',P')\ar[d]^\psi\\
    (E,\psi(P))\ar[r]^\phi& (E',\psi(P')).
    }\]
    Therefore,
    \[\psi(E',P')=(E', \phi(\psi(P)))=(E', \phi(aP))=(E', a \phi(P))=(E',a P')=a(E',P').\]
    
    Assume now that there is an isogeny $\phi \colon E'\to E$ of degree $l$. Then
    \[\phi(aP')=aP=\psi(P)=\phi(\psi(P')).\]
    As $\phi$ is injective on $E'[Np^{m+r}]$ it follows that $aP'=\psi(P')$. So in both cases we have shown that $\psi((E',P'))=a(E',P')$.
    
    As $\tilde\cG_N^{m+r}$ is connected, we deduce that \eqref{eq:toshowsurj} holds for all $(E',P')$ as required.
\end{proof}
Proposition~\ref{prop:Galois-cover} implies immediately Theorem~\ref{thmA} stated in the introduction:
\begin{corollary}\label{cor:main}
    The  graph coverings
    \[
    \tilde\cG_N^{m_0}\leftarrow\tilde\cG_N^{m_0+1}\leftarrow\cdots
    \]
    is an abelian $p$-tower in the sense of Definition~\ref{def:tower}.
\end{corollary}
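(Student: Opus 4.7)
The plan is to read off the corollary directly from Proposition~\ref{prop:Galois-cover} by matching the data against Definition~\ref{def:tower}. The substantive work is already contained in the proposition: for every $r\ge 0$, it exhibits the natural projection $\tilde\cG_N^{m+r}\to\tilde\cG_N^{m}$ as a Galois cover of undirected graphs with deck group isomorphic to $\ZZ/p^r\ZZ$, provided $m\ge\max(m_0,1)$. Since enlarging $m_0$ only discards finitely many graphs at the bottom of the tower, I would first replace $m_0$ by $\max(m_0,1)$ if necessary so that the proposition applies with the choice $m=m_0$.

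The remaining steps are bookkeeping. First, set $X_r:=\tilde\cG_N^{m_0+r}$ and $X:=X_0$, and verify that each map $\pi_{Np^{m_0+r}/Np^{m_0}}\colon X_r\to X$ is a covering of undirected graphs. Lemma~\ref{lem:cover} gives this for the directed graphs $\cG_N^{m_0+r}/\cG_N^{m_0}$, and forgetting directions preserves the local bijection between edges incident to a given vertex, so the undirected version is also a covering. Second, apply Proposition~\ref{prop:Galois-cover} with $m=m_0$ to upgrade each $X_r\to X$ to a Galois covering with Galois group $\ZZ/p^r\ZZ$, which is in particular abelian. Third, confirm that the sequence is genuinely a tower, i.e.\ that for $r\le r'$ the map $X_{r'}\to X$ factors through $X_r\to X$; this is immediate from the identity
\[
\pi_{Np^{m_0+r}/Np^{m_0}}\circ \pi_{Np^{m_0+r'}/Np^{m_0+r}}=\pi_{Np^{m_0+r'}/Np^{m_0}}
\]
of projection maps, which is transparent on the defining tuples since all three are induced by multiplication on the second coordinate.

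There is no real obstacle here: the corollary is essentially a repackaging of Proposition~\ref{prop:Galois-cover} in the language of Definition~\ref{def:tower}. The only point requiring attention is the lower bound $m\ge\max(m_0,1)$ in the proposition, which forces $m_0\ge 1$ when we take $m=m_0$; this is harmless because Theorem~A (= the statement underlying this corollary) only asserts the existence of some integer $m_0$ and tolerates its enlargement. All the delicate arguments — including the use of the subgroup $U\subset(\ZZ/Np^{m_0+r}\ZZ)^\times$ of elements congruent to $1$ modulo $Np^{m_0}$ to exhibit the Galois action, and the verification that distinct elements of $U$ produce distinct deck transformations via $\Aut(E)\in\{\{\pm 1\},\mu_4,\mu_6\}$ — have already been carried out in the proof of the proposition.
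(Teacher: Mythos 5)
Your proposal is correct and matches the paper's approach: the paper states the corollary as an immediate consequence of Proposition~\ref{prop:Galois-cover} and gives no separate argument, and your unpacking (set $X_r=\tilde\cG_N^{m_0+r}$, apply the proposition with $m=m_0$ after replacing $m_0$ by $\max(m_0,1)$ if needed, and note compatibility of the projections) is precisely the implicit bookkeeping. Your observation about the $\max(m_0,1)$ constraint is a fair point of care that the paper glosses over, but it is harmless since Theorem~A only asserts existence of some $m_0$.
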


\section{The structure of the crater}
\label{S:crater}
The goal of this section is to study the connected components of the crater $\cC(G_N^m)$ for any given $N$ and $m$. The CM field of this chosen connected component will be denoted by $K$ throughout.

We consider two separate cases. Namely, when $l$ is non-split in $K$ and when $l$ is split in $K$. The split case turns out to be  more delicate than the non-split case.

\subsection{The non-split case}

We first study the case where $l$ is non-split in $K$. This can be divided further into two sub-cases, namely either $l$ is inert or ramified in $K$.

\begin{lemma}
    Let $\cC_0$ be a connected component of $\cC(G_N^m)$. 
    \item[i)] If $l$ is inert in $K$, then $\cC_0$ consists of a single vertex, without any edges.
    \item[ii)] If $l$ is ramified in $K$, then $\cC_0$ is either a single vertex with a loop or it is a directed cycle, i.e., {there exists an integer $s\ge1$ such that $\cC_0$} consists of $s$ vertices $\{v_1,\dots v_s\}$ with edges going from from $v_i$ to $v_{i+1}$ for $1\le i\le s-1$ and an edge from $v_s$ to $v_1$. 
\end{lemma}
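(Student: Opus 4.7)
The strategy is to reduce the question to an analysis of horizontal edges alone. The key preliminary observation is that every edge inside the crater $\cC(G_N^m)$ is horizontal: descending edges have their target at level $1$ (outside the crater), and level-$0$ vertices admit no ascending edges. By Lemma~\ref{lem:number-of-isogenies} each level-$0$ vertex has exactly $1+\left(\frac{D_K}{l}\right)$ horizontal out-edges, which equals $0$ when $l$ is inert and $1$ when $l$ is ramified.

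Part~(i) is then almost immediate: if $l$ is inert, each crater vertex has no horizontal out-edge, and by Remark~\ref{rk:dual-isogeny} (duals of horizontal isogenies are horizontal) also no horizontal in-edge. A self-loop at $(E,P)$ would correspond to a degree-$l$ endomorphism of $E$, i.e.\ an element of $\cO_K$ of norm $l$, which does not exist when $l$ is inert in $K$. Hence every vertex of $\cC(G_N^m)$ is isolated and $\cC_0$ consists of a single vertex with no edges.

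For part~(ii), I would define a map $f\colon V(\cC(G_N^m))\to V(\cC(G_N^m))$ sending $v=(E,P)$ to its unique horizontal out-neighbour $(E/E[\mathfrak l],\phi_E(P))$, where $\mathfrak l$ is the ramified prime above $l$ and $\phi_E\colon E\to E/E[\mathfrak l]$ is the corresponding isogeny. The heart of the proof is to show that $f$ is a bijection on the (finite) set of level-$0$ vertices. Injectivity is the crux: if $f(v_1)=f(v_2)=(E',P')$, then dualising the two horizontal $l$-isogenies into $E'$ produces two horizontal $l$-isogenies out of $E'$; since $E'$ also admits only one such isogeny up to equivalence, the sources agree as curves in $\mathcal{E}$. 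The equality of the level structures then follows from $\gcd(l,Np^m)=1$ (so horizontal $l$-isogenies are injective on the $Np^m$-torsion), combined with the fact that pre-composition of an isogeny by an automorphism of $E$ is absorbed into the equivalence relation defining a vertex in $G_N^m$.

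Once $f$ is established as a permutation, the horizontal-edge structure of $\cC(G_N^m)$ is exactly the functional graph of $f$: each vertex $v$ is joined to $f(v)$ by a single directed edge, and $v$ and $f(v)$ always lie in the same connected component. A connected component is therefore a single $f$-orbit. An orbit of size $1$ is a fixed point $f(v)=v$, giving a single vertex with a loop; an orbit of size $s\ge 2$ gives a directed cycle $v_1\to v_2\to\cdots\to v_s\to v_1$ as stated. The main obstacle in the plan is the injectivity step for $f$; the remaining pieces, in particular the identification of connected components with $f$-orbits, are essentially formal consequences.
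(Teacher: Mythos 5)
Your proof is correct, and for part~(ii) it takes a genuinely different route from the paper. The paper's argument is explicit and computational: it lets $\fL$ be the unique ideal above $l$, splits into the cases $\fL$ principal or not, and in each case writes down the vertices of the cycle explicitly as $(E,x^iP)$ (respectively alternating between two curves), with the cycle length computed as an order in $(\cO/N\mathfrak p^m)^\times/\cO^\times$. Your argument is structural: since the ramified case gives each crater vertex exactly one horizontal out\nobreakdash-edge, the horizontal-isogeny assignment $f$ is a function on the set of level\nobreakdash-zero vertices, and once you show $f$ is a permutation (via dual isogenies and $\gcd(l,Np^m)=1$), the crater is its functional graph, whose components are loops and directed cycles. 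This is slightly cleaner because it does not require a case distinction on whether $\fL$ is principal; what you lose relative to the paper is the explicit description of the cycle and its length in terms of an order, information the paper carries forward into later sections but which is not needed for the lemma as stated. One small slip: in part~(i) you say a degree\nobreakdash-$l$ endomorphism would give an element of $\cO_K$ of norm $l$, but level zero only forces $\End(E)$ to be an order $\cO$ with $l\nmid[\cO_K:\cO]$, not $\cO=\cO_K$; the conclusion is unaffected since $l$ inert in $K$ and $l\nmid[\cO_K:\cO]$ still precludes elements of norm $l$ in $\cO$. For part~(i) the paper simply invokes Corollary~3.13 of~\cite{pazuki}; your reasoning is a slightly more expanded version of the same observation (no horizontal out\nobreakdash-edges, hence by duality no horizontal in\nobreakdash-edges or loops).
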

\begin{proof}
If there is an edge between two vertices of level zero, it has to be induced by a horizontal isogeny.
    Part i) follows immediately from \cite[Corollary 3.13]{pazuki}. 
    
    We now prove part ii). Let $(E,P)\in V(\cC_0)$. We set $\cO=\End(E)$. Let $\fL$ be the ideal of $\cO$ above $l$ (i.e. $l\cO=\fL^2$). Note that $\fL^2$ is a principal ideal. 
    
    If $\fL$ itself is principal, then there exists an element $x\in \mathcal{O}$ such that $E/E[\fL]=xE\cong E$. Let $h$ be the order of $x$ in $(\mathcal{O}/N\mathfrak{p}^m)^\times/\mathcal{O}^\times$, where we have written $\cO^\times$ for its natural image in $(\mathcal{O}/N\mathfrak{p}^m)^\times$ and $\mathfrak{p}$ is an ideal of $\cO$ lying above $p$.\footnote{The choice of $\mathfrak{p}$ depends on the choice of a CM lift $\mathbf{E}$ of $E$ over some finite abelian extension $L/K$ together with a prime ideal $\mathfrak{p}'$ above $p$ in $\mathcal{O}_L$ such that $\mathbf{E}\pmod {\mathfrak{p}'}=E$. As the integer $h$ does not depend on the choice of $\mathfrak{p}$, we suppress this choice here.} If $h=1$, then $\cC_0$ consists of a single vertex $(E,P)$ together with a loop. If $h>1$, then $\cC_0$ is  a directed cycle of length $h$, with vertices given by $(E,x^iP)$, $i=0,2,\ldots, h-1$, and edges given by $(E,P)\rightarrow (E,xP)\rightarrow\cdots\rightarrow (E, x^{h-1}P)\rightarrow (E,P)$. 
    
    If $\fL$ is not principal, the curve $E':=E/E[\fL]$ is not isomorphic to $E$. There is an $l$-isogeny $\phi:E\rightarrow E'$ and a dual isogeny $\hat \phi:E'\to E'/E'[\fL]\cong E$. Let $h$ be the order of $l$ in $(\mathcal{O}/N\mathfrak{p}^m)^\times/\mathcal{O}^\times$. Then $\cC_0$ is a directed cycle of length $2h$, with vertices given by $(E,l^iP)$ and $(E',\phi(l^iP))$, $i=1,\ldots, h$; the edges are given by $(E,l^iP)\rightarrow (E',\phi(l^iP))$ and $(E',\phi(l^iP))\rightarrow(E,l^{i+1}P)$. 
 \end{proof}
\begin{remark}
    In the inert case, while $\cC_0$ consists of a single vertex, there may be edges in $G_N^m$ connecting it to other vertices in $\cC(G_N^m)$ via edges arising from vertical isogenies. 
    
    Let $v=(E,P)\in V(\cC_0)$ and assume that $v$ admits an edge in $G_N^m$. Let $\cG_N^m\subset G_N^m$ be the connected component containing $v$. Then all the vertices of the form $(E,l^tP)$, $t\in\ZZ$, lie in $\cG_N^m$, as we have seen in Remark~\ref{rk:dual-isogeny}. Suppose that $v'\neq v$ is any vertex in $\cC(\cG_N^m)$.  We claim that $v'$ is of the form $(E,l^tP)$. 
    
    Indeed, as $E$ admits no horizontal isogeny, a level zero vertex of $\mathcal{G}_N^m$ is of the form $(E,P')$ for some $P'$. Thus, there is an endomorphism of $E$ of $l$ power degree that maps $P$ to $P'$. As $l$ is inert, the only $l$ power degree endomorphisms are given by powers of $[l]$. Thus, $P'=l^t P$ for some $t$ as claimed. \e
\end{remark}
\begin{remark}
    If $v\in \cC_0$ and $\fL$ is ramified or split in $K$, then all level zero vertices that lie in the same connected component of  $G_N^m$ as $v$ are also elements of  $V(\cC_0)$. \e
\end{remark}
 \subsection{Classification of vertices and edges in the split case}\label{S:split-crater}
 From now on, we assume that $l$ splits in $K$. Let $v_1=(E,P)$ be a fixed level zero vertex in $G_N^m$ and let $\mathcal{O}=\textup{End}(E)$. The ideal $l\mathcal{O}$ splits into two distinct ideals $\mathfrak{L}$ and $\overline{\mathfrak{L}}$. We continue to write $\cC_0$ for the connected component of $\cC(G_N^m)$ containing $v_1$.
 
 Note that all level zero vertices connected to $v_1$ are connected through horizontal isogenies. Further, $E$ admits exactly two horizontal isogenies, namely $E\to E/E[\fL]$ and $E\to E/E[\overline{\fL}]$. Thus, all elements of $V(\cC_0)$ arise from the curves of the form $E/E[\fL^a\overline{\fL}^b]$.  In particular, all these curves have complex multiplication by the same ring $\mathcal{O}$. An edge between two such vertices arises from either $\fL$ or $\overline{\fL}$. 
 
 It follows from Lemma~\ref{lem:degree-split case} that when either $m$ or $N$ is sufficiently large, there are  two edges $\EE(\cC_0)$ with $v_1$ as the source and two edges with $v_1$ as the target. We see that $\fL$  induces precisely one of the former and one of the latter, whereas the other two edges are induced by $\overline{\fL}$.

 The following lemma studies equalities in $V(\cC_0)$.

\begin{lemma}
\label{lem:connected}
     Let $v_i=(E_i,P_i)\in V(\cC_0)$, $i=1,2$. Suppose that
      \[(E_1/E_1[\fL^a\overline{\fL}^b],P_1+E_1[\fL^a\overline{\fL}^b])=(E_1/E_1[\fL^d\overline{\fL}^e],P_1+E_1[\fL^d\overline{\fL}^e])\]
    as elements of $V(\cC_0)$ for some nonnegative integers $a,b,d$ and $e$. Then
    \[(E_2/E_2[\fL^a\overline{\fL}^b],P_2+E_2[\fL^a\overline{\fL}^b])=(E_2/E_2[\fL^d\overline{\fL}^e],P_2+E_2[\fL^d\overline{\fL}^e])\]
    as elements of $V(\cC_0)$.
 \end{lemma}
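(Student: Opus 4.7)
The plan is to reduce the statement about $v_2$ to the hypothesis about $v_1$ by transporting along a horizontal isogeny. Because $v_1,v_2$ both lie in the connected component $\cC_0$, and every edge of $\cC_0$ is horizontal, I would concatenate the isogenies (and dual isogenies) along a path from $v_1$ to $v_2$ in $\cC_0$ to produce a horizontal isogeny $\Phi\colon E_1\to E_2$ with $\Phi(P_1)=P_2$. Its kernel has the form $E_1[\mathfrak{c}]$ for some ideal $\mathfrak{c}=\fL^s\overline{\fL}^t$ of $\cO$. The canonical factorisation
\[
E_1\longrightarrow E_2=E_1/E_1[\mathfrak{c}]\longrightarrow E_2/E_2[\mathfrak{a}]
\]
has kernel $E_1[\mathfrak{c}\mathfrak{a}]$, and so it identifies $(E_2/E_2[\mathfrak{a}],P_2+E_2[\mathfrak{a}])$ with $(E_1/E_1[\mathfrak{c}\mathfrak{a}],P_1+E_1[\mathfrak{c}\mathfrak{a}])$; the analogous identification holds for $\mathfrak{b}$. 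The desired equality is therefore equivalent to
\[
(E_1/E_1[\mathfrak{c}\mathfrak{a}],P_1+E_1[\mathfrak{c}\mathfrak{a}])=(E_1/E_1[\mathfrak{c}\mathfrak{b}],P_1+E_1[\mathfrak{c}\mathfrak{b}]).
\]

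Next, the hypothesis supplies an $\overline k$-isomorphism $\psi\colon E_1/E_1[\mathfrak{a}]\to E_1/E_1[\mathfrak{b}]$ with $\psi(P_1+E_1[\mathfrak{a}])=P_1+E_1[\mathfrak{b}]$. Since the level is zero, $\mathfrak{a}$ and $\mathfrak{b}$ are invertible $\cO$-ideals, so each quotient retains CM by $\cO$ via the canonically normalised embedding. I would then invoke the fact that any isomorphism between two CM elliptic curves sharing the same CM normalisation commutes with the $\cO$-action. Consequently $\psi$ carries $(E_1/E_1[\mathfrak{a}])[\mathfrak{c}]$ to $(E_1/E_1[\mathfrak{b}])[\mathfrak{c}]$. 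Quotienting by $\mathfrak{c}$-torsion on both sides and using the third isomorphism theorem to rewrite the quotients as $E_1/E_1[\mathfrak{c}\mathfrak{a}]$ and $E_1/E_1[\mathfrak{c}\mathfrak{b}]$, one obtains an induced isomorphism $\bar\psi\colon E_1/E_1[\mathfrak{c}\mathfrak{a}]\to E_1/E_1[\mathfrak{c}\mathfrak{b}]$ that sends $P_1+E_1[\mathfrak{c}\mathfrak{a}]$ to $P_1+E_1[\mathfrak{c}\mathfrak{b}]$. This yields the reformulated equality and hence the lemma.

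The main point requiring care is the commutativity of $\psi$ with the $\cO$-action, i.e.\ that $\psi\circ[\alpha]=[\alpha]\circ\psi$ for every $\alpha\in\cO$. In the ordinary setting this is standard: via the Deuring lifting theorem already invoked in Remark~\ref{rk:dual-isogeny}, one can pass to characteristic zero, where the CM normalisation is uniquely determined by the action on the one-dimensional tangent space at the origin. Any isomorphism between the two quotients therefore induces an $\cO$-linear isomorphism on tangent spaces, which forces it to intertwine the two copies of the $\cO$-action on the curves themselves. Once this is in place, the remainder of the argument is a formal functorial descent and requires no further input.
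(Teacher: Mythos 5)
Your proof is correct in substance, but it follows a genuinely different route from the paper's. The paper reduces to the case where $v_1$ and $v_2$ are adjacent ("propagating along the path") and then does a one-step computation with the projections $\phi_{\alpha,\beta,i}$: one writes $\phi_{a,b,2}(v_2)=\phi_{a+1,b,1}(v_1)=\phi_{1,0,3}(v_3)$ and similarly for $(d,e)$, and the two sides agree because $v_3$ is the same vertex by hypothesis. You instead collapse the whole path into a single horizontal $\cO$-linear isogeny $\Phi$ with kernel $E_1[\mathfrak{c}]$ and transport the isomorphism $\psi$ along it, which requires making explicit that $\psi$ commutes with the $\cO$-action (so that it maps $\mathfrak{c}$-torsion to $\mathfrak{c}$-torsion). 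Both arguments ultimately rely on this $\cO$-equivariance of isomorphisms between $\cO$-isogenous CM curves---the paper uses it implicitly when it applies $\phi_{1,0,3}$ to the vertex $v_3$ independently of the chosen representative. The trade-off is that the paper's inductive reduction avoids discussing $\Phi$ and the CM normalisation on tangent spaces, while your version makes the structural mechanism visible in one go.

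Two small points to tighten. First, when you assemble $\Phi$ from the edges of a path in $\cC_0$ you may need to traverse some edges backwards, i.e.\ replace an isogeny by its dual; by Remark~\ref{rk:dual-isogeny} the dual sends a marked point $P'$ to $lP$, so the composite isogeny only satisfies $\Phi(P_1)=l^kP_2$ for some $k\ge0$, not $\Phi(P_1)=P_2$. This is harmless because $l$ is invertible modulo $Np^m$, so $(E,l^kP)=(E',l^kP')$ as vertices is equivalent to $(E,P)=(E',P')$; but it should be said. Second, the tangent-space argument for $\cO$-equivariance is heavier than needed: since the set of $\overline{k}$-isomorphisms $E_1/E_1[\mathfrak{a}]\to E_1/E_1[\mathfrak{b}]$ is a torsor under $\Aut\cong\cO^\times$, which commutes with $\cO$, it suffices to exhibit \emph{one} $\cO$-equivariant isomorphism, which is immediate from the CM/lattice description (multiplication by a generator of $\mathfrak{a}\mathfrak{b}^{-1}$ on a characteristic-zero lift). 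With these clarifications the argument is complete.
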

 \begin{proof}
 Let $i\in\{1,2\}$ and $\alpha,\beta$ be nonnegative integers. We write $\phi_{\alpha,\beta,i}:E_i\rightarrow E_i/E_i[\cL^\alpha\overline{\cL}^\beta]$ for the isogeny given by the natural projection.
 Let us write $v_3=\phi_{a,b,1}(v_1)=\phi_{d,e,1}(v_1)$. We define $\phi_{\alpha,\beta,3}$ similarly.

Since $v_1$ and $v_2$ are level zero vertices lying in the same connected component of $\cC(G_N^m)$, there is a path in $\cC_0$ connecting $v_1$ to $v_2$. Thus, upon propagating along this path,  we may assume that there is an edge in $\EE(\cC_0)$ connecting $v_1$ to $v_2$. In this case,  this edge is induced by $\phi_{1,0,1}$ or $\phi_{0,1,1}$. Thus it suffices to prove the lemma for these two cases.

     Suppose that $\phi_{1,0,1}$ induces an edge from $v_1$ to $v_2$. Then one can check directly from definition that
     $$ \phi_{a,b,2}(v_2)=\phi_{a+1,b,1}(v_1)=\phi_{1,0,3}\circ\phi_{a,b,1}(v_1)=\phi_{1,0,3}(v_3).$$
     Similarly,
     $$ \phi_{d,e,2}(v_2)=\phi_{d+1,e,1}(v_1)=\phi_{1,0,3}\circ\phi_{d,e,1}(v_1)=\phi_{1,0,3}(v_3).$$
This proves the desired equality. The other case can be treated in the same manner.
 \end{proof}


\begin{defn}\label{def:GNm}
\item[i)]We call an edge in  $\cC_0$   \textbf{blue} if it is induced by the isogeny given by $\mathfrak{L}$ and we call it \textbf{green} if it is induced by $\overline{\mathfrak{L}}$. 
\item[ii)] We call a path in $\cC_0$ \textbf{blue} if it only consists of blue edges, we call it \textbf{green} if it only consists of green edges. 

\item[iii)] Let $h_1$ (resp. $h_2$) be the minimal length of a closed blue (resp. green) path starting at $v_1$ in $\cC_0$ without backtracks. 
\end{defn}
Note that all edges in $\cC_0$ are either blue or green (but not both). If we repeatedly apply $\fL$ to $v_1$, we will eventually obtain $v_1$. Indeed, there exists a nonnegative integer $n$ such that $\fL^n$ is a principal ideal $\gamma\cO$, say. As $l\nmid Np$, there exists an integer $n'$ such that  $\gamma^{n'} P=P$. By a similar argument to the one presented in Remark~\ref{rk:dual-isogeny}, there is a blue path of length $nn'$  sending $v_1$ to itself. This tells us that $h_1$ is finite. The same holds for $h_2$.

If $(E',P')$ lies on  a blue (resp. green) path originating from $v_1$, then $E'= E/E[\fF^a]$, $P'=P+E[\fF^a]$ for some integer $a$ and $\fF=\fL$ (resp. $\overline\fL$). It can happen that these two classes of vertices coincide. We study this phenomenon in the following proposition.

\begin{proposition}
    \label{prop:st}
There are positive integers $s\mid h_1$ and $t\mid h_2$ and a positive integer $c$ coprime to $h_2/t$ such that \[(E/E[\fL^s],P + {E[\fL^s]})=(E/E[\overline{\fL}^{ct}],P+ {E[\overline{\fL}^{ct}]})\]
as elements of $V(\cC_0)$. Suppose that $(s,t,c)$ is such a tuple with $s$ minimal. If $(s',t',c')$ is another tuple such that \[(E/E[\fL^{s'}],P + {E[\fL^{s'}]})=(E/E[\overline{\fL}^{c't'}],P+ {E[\overline{\fL}^{c't'}]}),\] then there exists  $d\in \ZZ$ such that $s'=ds$ and $t'=dt$. In particular, the minimality of $s$ implies the minimality of $t$.
\end{proposition}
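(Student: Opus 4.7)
The plan is to translate the problem into the structure of a sublattice of $\mathbb{Z}^2$. For an $\mathcal{O}$-ideal $\mathfrak{a}$ coprime to $Np$ (where $\mathcal{O}=\End(E)$), write $\mathfrak{a}\ast(E,P):=(E/E[\mathfrak{a}], P+E[\mathfrak{a}])$, and define
\[
P_\fL := \bigl\{(s,t)\in \mathbb{Z}_{\geq 0}^2 : \fL^s\ast(E,P) = \overline\fL^t\ast(E,P)\text{ in } V(\cC_0)\bigr\}.
\]
By definition $(h_1,0), (0,h_2)\in P_\fL$, and the composition law $\mathfrak{a}\ast(\mathfrak{b}\ast v)=\mathfrak{a}\mathfrak{b}\ast v$ shows $P_\fL$ is closed under componentwise addition. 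The first and most delicate step is to prove that the subgroup $\Lambda\subseteq\mathbb{Z}^2$ generated by $P_\fL$ satisfies $\Lambda\cap\mathbb{Z}_{\geq 0}^2 = P_\fL$. Given $(s_i,t_i)\in P_\fL$ with $s_1\geq s_2$ and $t_1\geq t_2$, composing the two equations with $\overline\fL^{t_2}$ and $\fL^{s_2}$ respectively and invoking $\fL\overline\fL=(l)$ yields $\fL^{s_1-s_2}\ast(E',P') = \overline\fL^{t_1-t_2}\ast(E',P')$ for some vertex $(E',P')\in V(\cC_0)$; here we use that multiplication by $l$ is injective on level-zero vertices since $\gcd(l,Np^m)=1$, allowing common $(l)$-factors to be stripped. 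Lemma~\ref{lem:connected} then propagates the resulting identity back to $(E,P)$, placing $(s_1-s_2,t_1-t_2)$ in $P_\fL$.

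Next, I would describe $\Lambda$ via Hermite normal form. Minimality of $h_1$ and $h_2$ gives $\Lambda\cap(\mathbb{Z}\times\{0\})=h_1\mathbb{Z}\times\{0\}$ and $\Lambda\cap(\{0\}\times\mathbb{Z})=\{0\}\times h_2\mathbb{Z}$, so $\Lambda$ has rank two. Writing $\Lambda$ in HNF yields a basis $\{(a,b),(0,h_2)\}$ with $a>0$ and $0\leq b<h_2$; the first intersection identity then forces $a = h_1\gcd(b,h_2)/h_2$. If $b>0$, set $s=a$, $t=\gcd(b,h_2)$, $c=b/t$; if $b=0$, take the element $(a,h_2)=(a,0)+(0,h_2)$ and set $s=h_1$, $t=h_2$, $c=1$. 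In either case $s\mid h_1$ (since $h_1/s=h_2/t$), $t\mid h_2$, and $\gcd(c,h_2/t)=\gcd(b/t,h_2/t)=1$. Because $(s,ct)$ lies in $\Lambda\cap\mathbb{Z}_{\geq 0}^2 = P_\fL$, this yields the required identity of vertices, proving existence.

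For the uniqueness statement, $a$ is by construction the smallest positive first coordinate of any element in $\Lambda$, so minimality of $s$ in the hypothesis forces $s=a$. Given any valid tuple $(s',t',c')$, write $(s',c't') = d(a,b) + (0,nh_2)\in\Lambda$ for integers $d\geq 1$ and $n$, so $s'=da$ and $c't'\equiv db\pmod{h_2}$. An elementary computation shows that for any positive integer $r$, the pair $(c',t')$ with $c't'=r$, $t'\mid h_2$, and $\gcd(c',h_2/t')=1$ is uniquely determined as $t'=\gcd(r,h_2)$ and $c'=r/t'$. Since $\gcd(c't',h_2)$ depends only on $c't'\pmod{h_2}$, this forces $t'=\gcd(db,h_2)$. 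Writing $b=eb'$ and $h_2=eh_2'$ with $\gcd(b',h_2')=1$ (so $e=t$), the relation $s'\mid h_1$ combined with $h_1 = a h_2'$ gives $d\mid h_2'$, and hence $\gcd(db,h_2)=e\gcd(d,h_2')=ed=dt$. Thus $(s',t')=(ds,dt)$, and the minimality of $t$ follows since $d\geq 1$. The main technical obstacle I foresee is the cancellation argument of the first paragraph, which requires some care in tracking the auxiliary vertex $(E',P')$ and applying Lemma~\ref{lem:connected} to transport equalities between vertices in $V(\cC_0)$.
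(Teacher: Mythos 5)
Your proof is correct, but it takes a genuinely different route from the paper. The paper's argument works directly with the set $\mathcal{S}$ of admissible tuples $(s,t,c)$: it observes (via the orbit-size argument implicit in Lemma~\ref{lem:connected}) that $h_1/s = h_2/t$ for any tuple in $\mathcal{S}$, then uses B\'ezout's identity to show that $s'':=\gcd(s,s')$ again arises from a tuple in $\mathcal{S}$, whence minimality forces $s''=s$, i.e.\ $s\mid s'$, and $t'=dt$ follows from the proportionality $h_1/s=h_2/t$. You instead reparametrize by setting $r=ct$, pass to the monoid $P_\fL\subset\ZZ_{\geq0}^2$ of relation vectors, establish the key cancellation property $\Lambda\cap\ZZ_{\geq0}^2=P_\fL$ (where $\Lambda$ is the generated subgroup), and then read the minimal tuple directly off the Hermite normal form $\{(a,b),(0,h_2)\}$ of $\Lambda$. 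The two proofs rely on the same two ingredients --- Lemma~\ref{lem:connected} for transporting identities around $\cC_0$, and the injectivity of $[l]$ on $Np^m$-torsion to strip common $\fL\overline\fL$-factors --- but they package them differently. Your lattice description is more laborious but gives more structural information: it explicitly parametrizes the entire relation lattice rather than only producing the minimal tuple, which makes the uniqueness half of the statement a mechanical consequence of HNF, at the modest cost of the additional bookkeeping in verifying $a=h_1\gcd(b,h_2)/h_2$ and the $\gcd$-recovery formula $t'=\gcd(c't',h_2)$. The paper's B\'ezout argument is shorter and avoids introducing $\Lambda$, but relies on the reader to verify that the tuple $(s'',\ldots)$ produced by B\'ezout has the right divisibility/coprimality shape, which you handle once and for all through HNF.
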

   
  \begin{proof}
      Let $\mathcal{S}$ be the set of all tuples $(s_1,t_1,c_1)$ such that $s_1\mid h_1$, $t_1\mid h_2$ and $$(E/E[\fL^{s_1}],P+{E[\fL^{s_1}]})=(E/E[\overline{\fL}^{c_1t_1}],P+{E[\overline{\fL}^{c_1t_1}]}).$$
 Note that $\mathcal{S}$ is non-empty since it contains $(h_1,h_2,1)$.
 
 Let  $(s,t,c)\in\cS$ such that $s$ is minimal. It follows from Lemma \ref{lem:connected} that $h_1/s=h_2/t$. It implies that $t$ is also minimal. Let $(s',t',c')\in\mathcal{S}$ and write $s''=\textup{gcd}(s',s)$. There are nonnegative integers $a_1,a_2$ such that $s''\equiv a_1s+a_2s'\pmod {h_1}$. Lemma \ref{lem:connected} implies that
\begin{align*}(E/E[\fL^{s''}],P+E[\fL^{s''}])&=(E/E[\fL^{a_1s+a_2s'}],P + {E[\fL^{a_1s+a_2s'}]})\\&=(E/E[\overline{\fL}^{a_1ct+a_2c't'}],P+{E[\overline{\fL}^{a_1ct+a_2c't'}]}).\end{align*}
As $s$ is minimal, we see that $s''=s$ and $s'=ds$ for some integer $d$. It follows that $t'=h_2 s'/h_1=d h_2s/h_1=dt$ as required. 
\end{proof}
From now on, we shall always write $s$ and $t$ for the minimal integers given by {Proposition}~\ref{prop:st}.

\begin{defn}
We call a vertex $v\in V(\cC_0)$ \textbf{central} if there exists some positive integer $\alpha$ and a blue path of length $\alpha s$ connecting $v_1$ to $v$.\end{defn}
\begin{remark}\label{rk:central-defn}
    Suppose that $v$ is a central vertex. Then Proposition~\ref{prop:st} tells us that there is a nonnegative integer $\alpha'$ such that there is a green path of length $\alpha't$ connecting $v_1$ to $v$. In other words, we may give an equivalent definition central vertices using green paths.
\end{remark}

\begin{defn}\label{def:primary}
    Let $v$ and $v'$ be any two central vertices connected through a blue path of length $s$. We call the $s-1$ vertices on this path that are different from $v$ and $v'$ \textbf{blue primary vertices}. We define \textbf{green primary vertices} similarly for a green  path of length $t$ between two central vertices.
\end{defn}

{\begin{remark}
The blue primary vertices introduced in Definition~\ref{def:primary} only exist if $s>1$. Similarly, the green primary ones only exist if $t>1$.\e
\end{remark}}
To illustrate, suppose that $h_1=12$ and $s=4$. Then we have a directed cycle $v_1\rightarrow v_2\rightarrow \cdots \rightarrow v_{12}\rightarrow v_1$ of length 12 passing through $v_1$, consisting of blue edges. The vertices $v_1,v_5,v_9$ are central, whereas the rest of the vertices on the cycle are blue primary. 
\begin{center}
\begin{tikzpicture}[vertex/.style={circle, draw}] 
\foreach \X[count=\Y] in {orange,blue,blue,blue,orange,blue,blue,blue,orange,blue,blue,blue}
{\node[draw,circle,\X] (x-\Y) at ({30*\Y+60}:2.5){$v_{\Y}$}; }
\foreach \X[count=\Y] in {0,...,11}
{\ifnum\X=0
\draw[->,blue] (x-12) --(x-\Y) ;
\else
\draw[->,blue] (x-\X) --(x-\Y) ;
\fi}
\end{tikzpicture}
\end{center}

We prove in the following lemma that central vertices and blue primary vertices are in fact mutually exclusive.

\begin{lemma}\label{lem:primary-not-central}
    The blue primary vertices and green primary vertices are not central.
\end{lemma}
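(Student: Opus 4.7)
The plan is to characterize central, blue primary, and green primary vertices by their indices on the blue (respectively green) orbit of $v_1$, and then deduce non-centrality from a straightforward divisibility argument.

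First I would observe that on the crater $\cC_0$, every vertex has the same endomorphism ring $\mathcal{O}$, so from any vertex $(E',P')\in V(\cC_0)$ the blue isogeny is the unique one with kernel $E'[\fL]$. Consequently, the blue edges emanating from $v_1$ form a deterministic cycle, which by the definition of $h_1$ has length exactly $h_1$. I will parametrize it by $\fL^i v_1$ for $i\in\ZZ/h_1\ZZ$. Similarly the green orbit of $v_1$ is a deterministic cycle of length $h_2$ parametrized by $\overline{\fL}^i v_1$ for $i\in\ZZ/h_2\ZZ$. Since any blue path from $v_1$ is just a prefix of this blue orbit, a vertex $v$ is central if and only if $v=\fL^{\alpha s}v_1$ for some $\alpha\ge 1$; equivalently, $v$ lies on the blue orbit at an index in the subgroup $s\ZZ/h_1\ZZ$.

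Next I would give the analogous description on the green orbit. By Proposition~\ref{prop:st}, iterating $\fL^s=\overline{\fL}^{ct}$ on $v_1$ shows that every central vertex is of the form $\overline{\fL}^{\alpha ct}v_1$. The coprimality assumption $\gcd(c,h_2/t)=1$ ensures that $ct$ and $t$ generate the same cyclic subgroup of $\ZZ/h_2\ZZ$, namely $t\ZZ/h_2\ZZ$. Thus the central vertices are exactly those appearing on the green orbit at indices divisible by $t$, matching Remark~\ref{rk:central-defn}.

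With these descriptions in hand, the lemma follows easily. Let $w$ be a blue primary vertex, lying on a blue path of length $s$ between two central vertices $v$ and $v'$. Write $v=\fL^{as}v_1$, so $w=\fL^{as+j}v_1$ for some $j$ with $1\le j\le s-1$. If $w$ were central, we would have $as+j\equiv \beta s\pmod{h_1}$ for some $\beta$; since $s\mid h_1$, reducing modulo $s$ gives $j\equiv 0\pmod s$, contradicting $1\le j\le s-1$. The green primary case is symmetric: write $w=\overline{\fL}^{bt+j'}v_1$ with $1\le j'\le t-1$; centrality would force $bt+j'$ to be divisible by $t$ modulo $h_2$, hence $j'\equiv 0\pmod t$, contradiction.

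The only subtle point, and the main thing that needs verification, is the green-orbit characterization of central vertices, since Proposition~\ref{prop:st} only directly exhibits central vertices at indices that are multiples of $ct$. Once the coprimality $\gcd(c,h_2/t)=1$ is used to upgrade "multiples of $ct$" to "multiples of $t$", the rest of the argument is just divisibility on $\ZZ$.
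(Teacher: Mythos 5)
Your proposal is correct and follows essentially the same approach as the paper's proof: the paper obtains a closed blue path of length $a+a's$ at $v_1$ and derives the contradiction $s\mid a$ with $1\le a\le s-1$, while you parametrize the blue orbit explicitly as $\fL^i v_1$ and get the same contradiction $s\mid j$ with $1\le j\le s-1$ by reducing the index relation modulo $s$. Your more explicit treatment of the green orbit (upgrading "multiples of $ct$" to "multiples of $t$" via $\gcd(c,h_2/t)=1$) is a reasonable clarification of what Remark~\ref{rk:central-defn} asserts, but the underlying argument is the same.
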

\begin{proof}
    We only consider blue primary vertices; the other case can be proved in a similar manner. Let $v$ be a blue primary vertex. By definition, it lies on a blue path of length $s$ from one central vertex  to another, say $w$. In particular, there is a blue path of length $a$ going from $v$ to  $w$, where  $1\le a\le s-1$. 
    
Suppose that $v$ is central. Since both $w$ and $v$ are central, it follows from Proposition~\ref{prop:st} that there is a blue path of length $a's$ going from $w$ to $v$ for some nonnegative integer $a'$.   Consequently, we obtain a closed blue path from $v$ to itself of length $a+a's$. 

By Proposition~\ref{lem:connected}, there is a closed blue path of length $a+a's$ from $v_1$ to itself.  Proposition~\ref{prop:st} says that $s$ divides $a+a's$. But this contradicts that $1\le a\le s-1$. Thus, $v$ is not central.
\end{proof}

\begin{remark}\label{rk:central}
    Lemma~\ref{lem:primary-not-central} tells us that there are in total $h_1/s$ central vertices, equally distributing along a closed blue path of length $h_1$ passing through $v_1$. Furthermore, Remark~\ref{rk:central-defn} tells us that we may equally count $h_2/t$  central vertices on a closed green path of length $h_2$ passing through $v_1$. In particular, we have the equality
    \[
    \frac{h_1}{s}=\frac{h_2}{t}.
    \]\e
\end{remark}

\begin{lemma}
\label{number-blue}
 There are $\displaystyle\frac{h_1}{s}\cdot(s-1)$ blue primary vertices and $\displaystyle\frac{h_2}{t}\cdot (t-1)$ green primary vertices, respectively.
\end{lemma}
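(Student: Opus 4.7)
The plan is to show that the blue primary vertices are exactly the non-central vertices on the unique blue cycle through $v_1$, from which the claimed count follows immediately; an analogous argument settles the green case.

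First I would observe that at every vertex $v = (E,P) \in V(\cC_0)$ there is a unique outgoing blue edge, namely the one induced by $\mathfrak{L}$, sending $v$ to $(E/E[\mathfrak{L}], P + E[\mathfrak{L}])$. This defines a function $f \colon V(\cC_0) \to V(\cC_0)$, and the paragraph preceding Definition~\ref{def:GNm} already shows that iterating $f$ on $v_1$ returns to $v_1$, so there is a closed blue path through $v_1$; its minimal length is $h_1$. The first real step is to verify that the $h_1$ vertices $v_1, f(v_1), \ldots, f^{h_1-1}(v_1)$ on the resulting cycle are pairwise distinct: if $f^i(v_1) = f^j(v_1)$ for some $0 \le i < j < h_1$, then applying $f^{h_1 - j}$ and using $f^{h_1}(v_1) = v_1$ yields $f^{h_1 + i - j}(v_1) = v_1$ with $0 < h_1 + i - j < h_1$, contradicting the minimality of $h_1$.

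Next, by Remark~\ref{rk:central} (or directly from the definition of "central" together with the uniqueness of blue trajectories from $v_1$), the central vertices in $\cC_0$ are exactly $\{ f^{\alpha s}(v_1) : 0 \le \alpha < h_1/s \}$, which has cardinality $h_1/s$. I would then show that the blue primary vertices are precisely the remaining vertices on the cycle. For the forward direction, any $f^j(v_1)$ with $j$ not divisible by $s$ can be written $j = \alpha s + r$ with $0 < r < s$, and so $f^j(v_1)$ lies strictly between the two consecutive central vertices $f^{\alpha s}(v_1)$ and $f^{(\alpha+1)s}(v_1)$ on the length-$s$ blue path joining them, hence is blue primary by definition. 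For the converse, any blue primary vertex lies on some length-$s$ blue path between two central vertices $u = f^{\beta s}(v_1)$ and $u'$; since blue edges from a given vertex are unique, this path must be $u \to f(u) \to \cdots \to f^s(u)$, so the vertex in question is of the form $f^{\beta s + r}(v_1)$ for some $0 < r < s$, and in particular lies on the cycle.

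Combining these, the blue primary vertices are exactly the $h_1 - h_1/s = (h_1/s)(s-1)$ non-central vertices on the blue cycle through $v_1$. The green case is entirely symmetric, using the analogous function induced by $\overline{\mathfrak{L}}$ and Remark~\ref{rk:central} to count $h_2/t$ central vertices among the $h_2$ vertices on the green cycle. The only step that needs a small argument rather than a direct appeal to definitions is the distinctness of the vertices on the blue (resp.\ green) cycle, and as noted this is an immediate consequence of the minimality of $h_1$ (resp.\ $h_2$); so I do not anticipate a genuine obstacle.
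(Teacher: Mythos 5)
Your proposal is correct and follows essentially the same route as the paper's proof. Both arguments rest on the same two facts: (i) all central and blue primary vertices lie on the closed blue cycle through $v_1$, and (ii) that cycle visits exactly $h_1$ distinct vertices, with the central ones occurring every $s$ steps. The paper phrases this as an upper-bound-then-sharpness argument ("the minimal number of edges needed to draw a closed blue path is $h_1$"), whereas you make the pairwise distinctness of $v_1, f(v_1), \ldots, f^{h_1-1}(v_1)$ explicit via the minimality of $h_1$ and the uniqueness of the outgoing blue edge; this is a slightly cleaner packaging of the identical idea.
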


\begin{proof}We only prove the statement on blue primary vertices.
The closed blue path of length $h_1$ gives rise to $h_1/s$ blue paths of length $s$, each of which connecting two central vertices. Each of these paths in turn gives rise to  $s-1$ blue primary vertices. Thus,  $\displaystyle\frac{h_1}{s}\cdot (s-1)$ is  an upper bound on the total number of blue primary vertices. 

We deduce from the previous paragraph that the total number of blue and central vertices is bounded above by $$\frac{h_1}{s}+\frac{h_1}{s}\cdot(s-1)=h_1.$$ All these vertices are connected through blue edges and lie on a closed blue path (without backtracks) through  $v_1$. The minimal number of edges needed to draw a closed blue path is $h_1$.  Therefore, this upper bound is in fact optimal. Thus, there are exactly $\displaystyle\frac{h_1}{s}\cdot(s-1)$ blue primary vertices.
\end{proof}
\begin{lemma}
There is no vertex that is simultaneously blue primary and green primary.
\end{lemma}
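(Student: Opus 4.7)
Suppose for contradiction that $v \in V(\cC_0)$ is both blue primary and green primary. Writing $\psi(a,b) := (E/E[\fL^a \overline{\fL}^b],\, P + E[\fL^a \overline{\fL}^b])$ as a convenient shorthand, this means there exist integers $p, q$ with $0 < p < h_1$, $0 < q < h_2$, $s \nmid p$, and $t \nmid q$, such that
\[
\psi(p,0) \;=\; v \;=\; \psi(0,q).
\]
My plan is to derive from this a new equality $\psi(p', 0) = \psi(0, c' t')$ with $0 < p' < s$ and $(p', t', c')$ belonging to the set $\mathcal{S}$ appearing in the proof of Proposition~\ref{prop:st}, contradicting the minimality of $s$.

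The key step is a Bezout-and-chaining manipulation. Set $p' := \gcd(p, s)$ and fix integers $a_1, a_2$ with $p' = a_1 p + a_2 s$; after shifting by a suitable multiple of $h_1$ I may assume $a_1 p + a_2 s \geq 0$. Lemma~\ref{lem:connected} propagates each of the two identifications $\psi(p,0) = \psi(0,q)$ and $\psi(s,0) = \psi(0,ct)$ to every vertex of $\cC_0$; iterating them at the successive vertices traversed along a blue path starting at $v_1$, and using that $\fL$ and $\overline{\fL}$ commute in $\mathcal{O}$, I obtain $\psi(kp, 0) = \psi(0, kq)$ and $\psi(ks, 0) = \psi(0, kct)$ for every $k \geq 0$. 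Chaining these yields
\[
\psi(p', 0) \;=\; \psi(a_1 p + a_2 s,\, 0) \;=\; \psi(0,\, a_1 q + a_2 ct).
\]
Reducing the second coordinate modulo $h_2$ via the green period relation $\psi(0, h_2) = \psi(0, 0)$, this becomes $\psi(p', 0) = \psi(0, q')$ for some $q' \geq 0$.

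To feed this into Proposition~\ref{prop:st}, I rewrite $q' = c' t'$ with $t' \mid h_2$ and $\gcd(c', h_2/t') = 1$, which is always possible by taking $t' = h_2/\mathrm{ord}_{\Z/h_2\Z}(q')$, so that $c' = q'/t'$ is automatically coprime to $h_2/t'$. Since $p' = \gcd(p, s)$ divides $s$ and $s \mid h_1$, the triple $(p', t', c')$ lies in $\mathcal{S}$. The minimality statement of Proposition~\ref{prop:st} then forces $p' = d s$ for some $d \in \Z$; combined with $p' \mid s$ and $p' > 0$, we get $p' = s$, hence $s \mid p$, contradicting $s \nmid p$.

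The step I expect to require the most care is the chaining itself: one must verify that substitutions blue-$s \leftrightarrow$ green-$ct$ and blue-$p \leftrightarrow$ green-$q$ can be interleaved along a single sequence of isogenies by iterated use of Lemma~\ref{lem:connected} at the intermediate vertices, and that the passages to nonnegative exponents modulo $h_1$ and $h_2$ preserve all relevant equalities. Once this bookkeeping is set up, the final contradiction with the minimality of $s$ is immediate, and the green-primary hypothesis on $v$ is only used to guarantee that $v$ lies on the green cycle from $v_1$ (so that $q$ exists), while the actual contradiction is derived entirely on the blue side.
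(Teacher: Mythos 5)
Your proof is correct and follows essentially the same strategy as the paper's: derive a blue--green coincidence with a blue exponent strictly between $0$ and $s$, and contradict the minimality statement of Proposition~\ref{prop:st}. The paper reaches such a coincidence more directly, by noting that $v$ lies at blue distance $a$ (with $0<a<s$) from a nearby central vertex $w$ and at green distance $a'$ (with $0<a'<t$) from a nearby central vertex $w'$, and then joining $w$ to $w'$ by a green path of length $a''t$; this immediately gives that $\fL^a$ and $\overline\fL^{a'+ta''}$ agree, which contradicts Proposition~\ref{prop:st}. You instead write $v=\psi(p,0)=\psi(0,q)$ with respect to $v_1$ and perform a Bezout reduction of $p$ to $p'=\gcd(p,s)$. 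That reduction is in fact the step that is used \emph{inside} the proof of Proposition~\ref{prop:st}, so your argument has the small advantage of producing a triple with $p'\mid h_1$ and hence invoking the proposition's statement cleanly, whereas the paper's $a$ need not divide $h_1$ and the contradiction really appeals to the proposition's proof. Two small points to tighten: in the Bezout shift you need $a_1,a_2\ge 0$ individually (not just $a_1p+a_2s\ge0$) for the interleaved chaining to make sense, which is achievable by adding multiples of $h_1$ to each coefficient; and you should note that $q'\ne 0$ (otherwise $\psi(p',0)=v_1$ with $0<p'<h_1$ would contradict the minimality of $h_1$), since your definition of $t'$ via the additive order of $q'$ tacitly assumes this.
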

\begin{proof}
    Suppose that $v$ is a vertex that is both blue primary and green primary. There exist central vertices  $w$ and $w'$ such that $w$ is connected to $v$ through a blue path of length $1\le a\le s-1$ and  $w'$ is connected to $v$ through a green path of length $1\le a'\le t-1$. 
    
    Since both  $w$ and $w'$ are central, they are connected through a green path of length $a''t$ for some nonnegative integer $a''$. It follows that the isogenies induced by $\mathfrak{L}^a$ and by $\overline{\mathfrak{L}}^{a'+ta''}$ coincide on $v$. But $0<a<s$ and $0<a'<t$, which contradicts Propoistion~\ref{prop:st}. Thus, such $v$ does not exist.
 \end{proof}

Suppose that both $s$ and $t$ are strictly greater than $1$. Let $v$ be a blue primary vertex. It follows from Lemma~\ref{lem:primary-not-central} that $v$ is not central. Consequently, there is a green path of length $t$ from $v$ to some blue primary vertex $v'$. We shall study the vertices appearing on such a path.

 \begin{defn} Excluding the end-points, we call the vertices lying on a green path linking two blue primary vertices \textbf{green secondary vertices}. We define \textbf{blue secondary} vertices in a similar manner.\end{defn}

By Lemma~\ref{number-blue}, there are $h_1-h_1/s$ blue primary vertices. A green path connecting two blue primary vertices has length $t$ (following from Lemma~\ref{lem:connected} and Proposition~\ref{prop:st}). Thus, the number of green secondary vertices is bounded above by
\[
\left(h_1-\frac{h_1}{s}\right)(t-1)=\frac{h_1}{s}(s-1)(t-1).
\]
Similarly, the number of blue secondary vertices is bounded above by
\[
\left(h_2-\frac{h_2}{t}\right)(s-1)=\frac{h_2}{t}(s-1)(t-1).
\]
These two upper bounds are equal to each other since $h_1/s=h_2/t$ by  Remark~\ref{rk:central}. 
 \begin{lemma}
     There are exactly $\displaystyle\frac{h_1}{s}(s-1)(t-1)=\frac{h_2}{t}(s-1)(t-1)$ blue/green secondary vertices.
 \end{lemma}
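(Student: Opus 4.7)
The plan is to show that the upper bound $\frac{h_1}{s}(s-1)(t-1)$ on green secondary vertices is attained; the statement for blue secondary vertices follows by symmetry together with $h_1/s = h_2/t$ (Remark~\ref{rk:central}). The task reduces to showing that the $t-1$ intermediate vertices on each of the $(h_1/s)(s-1)$ green paths of length $t$ starting at a blue primary vertex are pairwise distinct and disjoint from the set of central and primary vertices.

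My first step would be to set up coordinates. For $a, b \in \ZZ$, let $\psi_{a,b}(v_1)$ denote the vertex obtained from $v_1$ by applying the ideal $\fL^a \overline{\fL}^b$, extending to negative exponents via dual isogenies. By Lemma~\ref{lem:connected}, the set $\Lambda := \{(a,b) \in \ZZ^2 : \psi_{a,b}(v_1) = v_1\}$ is a sublattice of $\ZZ^2$, and every vertex of $\cC_0$ arises as $\psi_{a,b}(v_1)$ for some class $(a,b) \in \ZZ^2/\Lambda$. Using the minimality of $s$ from Proposition~\ref{prop:st} and the definition of $h_2$, I would identify $\Lambda$ as the lattice generated by $(s, -ct)$ and $(0, h_2)$. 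With this description the catalog of vertices in $\cC_0$ becomes explicit: central vertices are $\psi_{\alpha s, 0}(v_1)$; blue primary vertices are $\psi_{\alpha s + k, 0}(v_1)$ with $1 \le k \le s-1$; green primary vertices are $\psi_{0, \alpha t + j}(v_1)$ with $1 \le j \le t-1$; and the candidate green secondary vertices are $\psi_{\alpha s + k, j}(v_1)$ with $1 \le k \le s-1$, $1 \le j \le t-1$, and $\alpha \in \ZZ/(h_1/s)\ZZ$.

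A coincidence of two candidates would force the difference vector $((\alpha - \alpha')s + (k - k'), j - j')$ to lie in $\Lambda$: the first coordinate must be a multiple of $s$, forcing $k = k'$; the resulting second coordinate must then be a multiple of $t$ (every element of $\Lambda$ has second coordinate divisible by $t$, because $h_2 \in t\ZZ$), forcing $j = j'$ from $|j-j'| < t$; finally $\alpha \equiv \alpha' \pmod{h_1/s}$. An analogous check, using that $k \ne 0$ and $j \ne 0$ for a candidate, rules out coincidence with any central or primary vertex. The step I expect to be the main obstacle is justifying that $\Lambda$ is generated by $(s, -ct)$ and $(0, h_2)$: the inclusion $\supseteq$ is immediate, while for $\subseteq$ one must use the minimality of $s$ to show that any $(a, b) \in \Lambda$ has $s \mid a$, then reduce modulo $(s, -ct)$ to land on the vertical axis, where the minimality of $h_2$ concludes. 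Once $\Lambda$ is pinned down, the counting above yields the claim.
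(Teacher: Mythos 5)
Your proof is correct but takes a genuinely different route from the paper. The paper's argument is combinatorial: fix two adjacent central vertices, take the $s-1$ blue primary vertices between them, and run the green cycle of length $h_2$ through each. These cycles are disjoint (else two distinct blue primaries would lie on the same green cycle, contradicting the minimality of $s$), each contains exactly $h_2/t$ blue primary vertices by Proposition~\ref{prop:st}, hence $h_2 - h_2/t$ green secondaries, for a total of $(s-1)(h_2 - h_2/t) = \frac{h_2}{t}(s-1)(t-1)$ -- matching the upper bound established just before the lemma. Your argument instead sets up the lattice $\Lambda = \{(a,b) : \psi_{a,b}(v_1)=v_1\}$ and enumerates all vertices of $\cC_0$ as cosets of $\Lambda$, which is a cleaner, more uniform framework (it would also streamline the earlier lemmas on primary vertices). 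The trade-off is that identifying $\Lambda = \langle (s,-ct),(0,h_2)\rangle$ requires a little more care than you indicate: Proposition~\ref{prop:st} as stated only governs tuples $(s',t',c')$ with $s'\mid h_1$ and $t'\mid h_2$, so to show an arbitrary $(a,b)\in\Lambda$ has $s\mid a$ you should first pass to $\gcd(a,s)$ (which divides $h_1$ and still lies in the projection $\pi_1(\Lambda)$ since that is a subgroup of $\ZZ$, using that $(h_1,0)\in\Lambda$), then normalize the second coordinate to a divisor of $h_2$ by the same device, and only then invoke the minimality of $s$. With that detail filled in, the rest of your coincidence-checking and the exclusion of central and primary vertices go through exactly as you describe.
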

 \begin{proof}
 Take any two central vertices linked by a blue path of length $s$. This gives  $s-1$ blue primary vertices  lying on a blue path of length $s-2$.  Through each of these vertices, there exists a green cycle of minimal length, i.e., a cycle of length $h_2$ obtained by repeatedly applying $\overline{\fL}$. Let's call them $C_1,\ldots, C_{s-1}$. These cycles are disjoint by construction. 

Let $v=(E',P')$ be one of the chosen blue primary vertices. Then $E'\cong E/E[\fL^\alpha]$ for some nonnegative integer $\alpha$. Proposition~\ref{prop:st} tells us that after applying $\overline{\fL}^t$ to $v$, we obtain a primary blue vertex. Therefore, each cycle $C_i$ contains $h_2/t$ blue primary vertices. In particular, the rest of the vertices on $C_i$ are  green secondary since they lie on a green path linking two blue primary vertices. This results in $h_2-h_2/t$ green secondary vertices on $C_i$. Thus, this gives in total at least $(s-1)(h_2-h_2/t)$ green secondary vertices. But this is exactly the upper bound, hence the equality holds. \end{proof}
 \begin{lemma}
     A blue/green secondary vertex is not central.
 \end{lemma}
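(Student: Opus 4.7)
The plan is to argue by contradiction, focussing on green secondary vertices; the blue secondary case is entirely symmetric. The key idea is to extract from the hypothesis that a green secondary vertex $v$ is central a univariate relation of the form $\fL^a(v_1) = \overline{\fL}^e(v_1)$ with $a$ \emph{not} a multiple of $s$, and then invoke the minimality of $s$ in Proposition~\ref{prop:st}.

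By the structure already established in \S\ref{S:split-crater}, every central vertex has the form $\fL^{bs}(v_1)$ (Remark~\ref{rk:central}), every blue primary vertex arises as $\fL^a$ applied to a central vertex with $1 \le a \le s-1$, and a green secondary vertex arises as $\overline{\fL}^{a'}$ applied to a blue primary vertex with $1 \le a' \le t-1$. Using that the actions of $\fL$ and $\overline{\fL}$ on vertices commute (they are ideals in the commutative ring $\mathcal{O}$, and $\fL\overline{\fL}=(l)$ is principal), any green secondary $v$ can be written as
\[
v = (E/E[\fL^{a+bs}\overline{\fL}^{a'}],\ P + E[\fL^{a+bs}\overline{\fL}^{a'}])
\]
with $1 \le a \le s-1$, $1 \le a' \le t-1$ and $b \ge 0$. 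If also $v = \fL^{cs}(v_1)$ is central, one obtains the equality $\fL^{a+bs}\overline{\fL}^{a'}(v_1) = \fL^{cs}(v_1)$ in $V(\cC_0)$. Converting each occurrence of $\fL^{s}$ at $v_1$ to $\overline{\fL}^{ct}$ via Proposition~\ref{prop:st}, and then stripping off the leftover $\overline{\fL}^{bct+a'}$ on the left by applying $\overline{\fL}^{N}$ to both sides with $N$ a positive multiple of $h_2$ minus $(bct+a')$ (using that $\overline{\fL}^{h_2}$ fixes $v_1$ and commutes with $\fL^a$), I would arrive at an equality
\[
\fL^a(v_1) = \overline{\fL}^e(v_1)
\]
for some $e \ge 0$.

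To conclude, I would replay the Bezout argument from the proof of Proposition~\ref{prop:st}: setting $d = \gcd(a, s)$ and choosing nonnegative integers $\alpha, \beta$ with $d \equiv \alpha a + \beta s \pmod{h_1}$, iterating the two relations $\fL^a(v_1) = \overline{\fL}^e(v_1)$ and $\fL^s(v_1) = \overline{\fL}^{ct}(v_1)$ and combining them through Lemma~\ref{lem:connected} together with $\fL^{h_1}(v_1) = v_1$ yields $\fL^d(v_1) = \overline{\fL}^r(v_1)$ with $r \equiv \alpha e + \beta ct \pmod{h_2}$. The triple $(d,1,r)$ thus lies in the set $\mathcal{S}$ of that proof (since $d \mid s \mid h_1$ and $1 \mid h_2$), so the minimality of $s$ forces $s \le d \le a \le s-1$, a contradiction. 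The main obstacle is the cancellation step producing the univariate relation $\fL^a(v_1) = \overline{\fL}^e(v_1)$: it hinges on the commutativity of the $\fL$- and $\overline{\fL}$-actions on vertices together with the cyclic relation $\overline{\fL}^{h_2}(v_1) = v_1$. Once this univariate relation is in hand, the contradiction with the minimality of $s$ is essentially the same Bezout computation as in the proof of Proposition~\ref{prop:st}.
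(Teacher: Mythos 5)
Your proof is correct and takes essentially the same route as the paper's: assume the secondary vertex is central, write it simultaneously as $\fL^{a+bs}\overline{\fL}^{a'}(v_1)$ (with $1\le a\le s-1$, $1\le a'\le t-1$) and as a central vertex at blue distance a multiple of $s$ from $v_1$, and contradict the minimality of $s$ from Proposition~\ref{prop:st}. The main difference is level of detail: the paper's proof jumps from the mixed relation $\fL^a\overline{\fL}^{a'}(w)=\fL^{a''s}(w)$ to ``impossible by Proposition~\ref{prop:st}'' without converting it to the univariate form required by that proposition, while you explicitly carry out the cancellation (using commutativity of the $\fL$/$\overline{\fL}$-actions, the cyclic relations $\fL^{h_1}(v_1)=v_1$ and $\overline{\fL}^{h_2}(v_1)=v_1$, and the propagation Lemma~\ref{lem:connected}) to reach $\fL^a(v_1)=\overline{\fL}^e(v_1)$ and then replay the B\'ezout step from the proof of Proposition~\ref{prop:st} to land a triple $(d,1,r)\in\cS$ with $0<d<s$, which directly contradicts minimality. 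This is more careful than the published argument and is, if anything, the version that should have been written; the only cosmetic issues are that the set $\cS$ in the proof of Proposition~\ref{prop:st} requires $d\mid h_1$ (which holds since $d\mid s\mid h_1$, so you should state it), and that your constant $c$ for centrality clashes notationally with the $c$ of Proposition~\ref{prop:st}.
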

 \begin{proof}
     Let $v$ be a blue secondary vertex. Let $v'$ be a green primary vertex connected to $v$ through a blue path of length $1\le a\le s-1$. Let $w$ be a central vertex connected to $v'$ through a green path of length $1\le a'\le t-1$. If $v$ is also central, then $v$ and $w$ are connected through a blue path of length $a''s$. This implies that the isogenies induced by $\mathfrak{L}^a\overline{\mathfrak{L}}^{a'}$ and  $\mathfrak{L}^{a''s}$ coincide on $v$, which is impossible by Lemma~\ref{prop:st}. 
 \end{proof}
By a similar argument, one can show:
 \begin{lemma}
     A blue secondary vertex is not a green primary vertex. And a green secondary one is not blue primary.
 \end{lemma}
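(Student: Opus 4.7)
The plan is to adapt the argument of the immediately preceding lemma (secondary vertices are not central), combining the two hypothesised descriptions of the vertex and invoking the minimality of $s$ from Proposition~\ref{prop:st} to reach a contradiction. The two assertions of the lemma are symmetric under the swap $(\fL,s)\leftrightarrow(\overline{\fL},t)$, so I sketch only the first: a blue secondary vertex $v$ cannot simultaneously be green primary.

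Arguing by contradiction, express $v$ in two ways. Being blue secondary, $v$ sits on a blue path of length $s$ between two green primary vertices; picking one endpoint $u$, one can write $v=\fL^a(u)$ with $1\le a\le s-1$. Since $u$ is green primary, $u=\overline{\fL}^{a'}(w)$ for some central vertex $w$ and some $1\le a'\le t-1$; hence $v=\fL^a\overline{\fL}^{a'}(w)$. Being green primary, one also has $v=\overline{\fL}^b(w')$ for another central vertex $w'$ and some $1\le b\le t-1$. Since $w$ and $w'$ are both central, they are joined by a blue path of length $\alpha s$ for some integer $\alpha\ge 0$, so $w'=\fL^{\alpha s}(w)$.

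Combining these descriptions of $v$ yields the vertex-level equality $\fL^a\overline{\fL}^{a'}(w)=\fL^{\alpha s}\overline{\fL}^{b}(w)$. Applying Lemma~\ref{lem:connected} to transport this identity from $w$ to $v_1$, one obtains
\[
\fL^a\overline{\fL}^{a'}(v_1)\;=\;\fL^{\alpha s}\overline{\fL}^{b}(v_1).
\]
The last step is to convert this into a pure relation of the form $\fL^{s'}(v_1)=\overline{\fL}^{c't'}(v_1)$ to which Proposition~\ref{prop:st} applies. This is done by cancelling common factors of the principal ideal $\fL\overline{\fL}=l\cO$ from either side (which, by the mechanism of Remark~\ref{rk:dual-isogeny}, acts as multiplication-by-$l$ on the level-structure point) and rearranging. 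A brief case analysis on the signs of $a-\alpha s$ and $a'-b$ shows that the resulting pure relation has $\fL$-exponent congruent to $-a\pmod s$, so the minimality conclusion of Proposition~\ref{prop:st} forces $s\mid a$, contradicting $1\le a\le s-1$. The second assertion follows by interchanging the roles of blue and green throughout.

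The main delicacy is this last rearrangement step: one must check that in each of the sign subcases on $a-\alpha s$ and $a'-b$ the surviving pure relation has a nonzero $\fL$-exponent, which is what clashes with the minimality of $s$. This is precisely where the principality of $\fL\overline{\fL}$ is essential, since it allows one to trade a matched $\fL\overline{\fL}$ pair for a multiplication-by-$l$ action on the point without affecting the underlying curve.
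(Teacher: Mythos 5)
Your proof is correct and takes essentially the same route as the paper's: express $v$ once via $\fL^a\overline{\fL}^{a'}$ from a central vertex $w$ (using that $v$ is blue secondary, with $1\le a\le s-1$ and $1\le a'\le t-1$) and once via a second path to $w$ using the assumed green-primary structure, transport the resulting coincidence of isogenies to $v_1$ by Lemma~\ref{lem:connected}, and invoke the minimality statement of Proposition~\ref{prop:st} to force $s\mid a$, contradicting $1\le a\le s-1$. The only bookkeeping difference is that the paper joins $w$ to the central vertex nearest $v$ by a green path, giving $\fL^a\overline{\fL}^{a'}(v_1)=\overline{\fL}^{bt+a''}(v_1)$ directly, whereas your blue-path version and the ``cancel $\fL\overline{\fL}$ pairs'' rearrangement (with its appeal to principality) are slightly heavier than needed, since $\fL$ and $\overline{\fL}$ act on $v_1$ through cyclic groups of orders $h_1$ and $h_2$ and negative exponents can simply be reduced modulo those orders.
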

 \begin{proof}
     Let $v$ be a blue secondary vertex and let $v'$ be a green primary vertex connected to $v$ through a blue path of length $1\le a\le s-1$. Let $w$ be a central vertex connected to $v'$ via a green path of length $1\le a'\le t-1$. If $v$ is a green primary vertex, then $v$ and $w$ are connected through a green path of length $bt+a''$ with $1\le a''\le t-1$. Thus, the isogeny induced by $\mathfrak{L}^a\overline{\mathfrak{L}}^{a'}$ and the one induced by $\overline{\mathfrak{L}}^{bt+a''}$ coincide, which is impossible by Proposition~\ref{prop:st}. 
 \end{proof}
 \begin{lemma}
     A blue secondary vertex is not blue primary.
 \end{lemma}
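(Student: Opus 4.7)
The plan is to argue by contradiction. Suppose $u \in V(\cC_0)$ is simultaneously blue primary and blue secondary.

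First I would unpack the two descriptions of $u$. The blue-primary hypothesis furnishes a central vertex $w$ and an integer $1 \le a \le s-1$ with $u = \fL^a w$. The blue-secondary hypothesis furnishes a green primary vertex $v'$ with $u = \fL^b v'$ for some $1 \le b \le s-1$, and in turn a central vertex $y$ and an integer $1 \le a' \le t-1$ with $v' = \overline{\fL}^{a'} y$. Since $w$ and $y$ are both central vertices of $\cC_0$, Remark~\ref{rk:central} gives $y = \fL^{\beta s} w$ for some $\beta \in \ZZ_{\ge 0}$. Substituting, the two descriptions of $u$ yield the equality
\[
\fL^a w = \fL^{b + \beta s} \overline{\fL}^{a'} w
\]
as vertices in $V(\cC_0)$.

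Next I would apply Lemma~\ref{lem:connected} to transfer this identity to the base vertex $v_1$, obtaining $\fL^a v_1 = \fL^{b+\beta s} \overline{\fL}^{a'} v_1$. Multiplying both sides by $\fL^{Nh_1 - b - \beta s}$ for $N \in \ZZ_{>0}$ chosen large enough that the exponent is nonnegative, and using the fact that $\fL^{h_1}$ acts as the identity on every vertex of $\cC_0$ (which follows from $\fL^{h_1} v_1 = v_1$ by another application of Lemma~\ref{lem:connected}), the right-hand side would collapse to $\overline{\fL}^{a'} v_1$ and the left-hand side to $\fL^{s''} v_1$ with $s'' := Nh_1 + a - b - \beta s > 0$. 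This would give the relation
\[
\fL^{s''} v_1 = \overline{\fL}^{a'} v_1, \qquad s'' > 0, \quad 1 \le a' \le t - 1,
\]
which I claim is impossible by Proposition~\ref{prop:st}, as it would force $t \mid a'$.

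The hard part will be making this final appeal to Proposition~\ref{prop:st} fully rigorous, because that proposition is stated only for triples $(s'',t',c')$ with $s'' \mid h_1$ and the green exponent factoring as $c't'$ with $t' \mid h_2$, whereas the $s''$ produced above is not \emph{a priori} a divisor of $h_1$. Bridging the gap requires the standard Bezout reduction: iterating $\fL^{s''} v_1 = \overline{\fL}^{a'} v_1$ gives $\fL^{\alpha s''} v_1 = \overline{\fL}^{\alpha a'} v_1$ for every positive integer $\alpha$, and choosing $\alpha$ such that $\alpha s'' \equiv \gcd(s'', h_1) \pmod{h_1}$ produces a relation with $\fL$-exponent dividing $h_1$ and positive green exponent. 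Proposition~\ref{prop:st} then forces $t$ to divide that green exponent, hence to divide $a'$, contradicting $1 \le a' \le t-1$. This is essentially the manipulation performed inside the proof of Proposition~\ref{prop:st} itself, and once it is made explicit the contradiction is immediate.
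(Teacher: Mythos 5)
Your approach is genuinely different from the paper's. The paper gives a short combinatorial argument: a blue secondary vertex $v$ sits on a blue path of length $s$ between two green primary vertices $w,w'$, and if $v$ were also blue primary the endpoints $w,w'$ would be forced to be central, contradicting Lemma~\ref{lem:primary-not-central}. (The paper's proof writes ``green path'' where ``blue path'' is intended.) You instead translate the two hypotheses into an identity $\fL^{s''}v_1=\overline{\fL}^{a'}v_1$ at the base vertex and aim to contradict Proposition~\ref{prop:st} --- a longer but fully explicit algebraic route, parallel in spirit to the proofs of the two preceding lemmas in the paper.

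Your final step, however, has a gap. After iterating to $\fL^{\gcd(s'',h_1)}v_1=\overline{\fL}^{\alpha a'}v_1$, Proposition~\ref{prop:st} does give $t\mid\alpha a'$, but the leap to $t\mid a'$ requires $\gcd(\alpha,t)=1$, which you neither state nor arrange. The argument can be repaired: the admissible $\alpha$ form a residue class $\alpha_0\pmod{h_1/\gcd(s'',h_1)}$ with $\gcd(\alpha_0,h_1/\gcd(s'',h_1))=1$, so a CRT argument produces a representative coprime to $t$. A cleaner fix avoids the choice of $\alpha$ altogether: reduce $s''$ modulo $h_1$ to some $\sigma\in[0,h_1)$. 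If $\sigma=0$ then $\overline{\fL}^{a'}v_1=v_1$ forces $h_2\mid a'$, absurd. Otherwise take a Bezout combination of $\fL^{\sigma}v_1=\overline{\fL}^{a'}v_1$ and $\fL^{s}v_1=\overline{\fL}^{ct}v_1$ to obtain a relation with $\fL$-exponent $\gcd(\sigma,s)$ and nontrivial $\overline{\fL}$-exponent; the minimality underlying Proposition~\ref{prop:st} then gives $s\mid\sigma$, whence $\overline{\fL}^{a'}v_1=\overline{\fL}^{(\sigma/s)ct}v_1$, so $h_2\mid a'-(\sigma/s)ct$ and $t\mid a'$, contradicting $1\le a'\le t-1$.
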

 \begin{proof}
     Let $v$ be a blue secondary vertex. Let $w$ and $w'$ be green primary vertices connected through a green path of length $s$ passing through $v$. If $v$ is blue primary, then $w$ and $w'$ are central, which contradicts Lemma~\ref{lem:primary-not-central}. 
 \end{proof}
 \begin{lemma}
     Each blue secondary vertex is a green secondary vertex, and vice versa.
 \end{lemma}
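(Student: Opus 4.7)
The plan is to reduce to proving one direction, that every blue secondary vertex is green secondary; the reverse follows by swapping $\fL$ and $\overline{\fL}$ in Proposition~\ref{prop:st}, which yields a relation $\overline{\fL}^t v_1 = \fL^{c' s} v_1$ with $c' \equiv c^{-1} \pmod{h_1/s}$, and $\gcd(c',h_1/s)=1$ since $c$ is invertible. So the setup is completely symmetric under $(\fL,s,c)\leftrightarrow(\overline{\fL},t,c')$.

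Fix a blue secondary vertex $v$. By definition it lies on a blue path of length $s$ between two green primary vertices, and writing the preceding green primary endpoint as $(E/E[\overline{\fL}^b],P+E[\overline{\fL}^b])$ with $b\ge 1$ and $t\nmid b$, we may write $v=(E/E[\fL^r\overline{\fL}^b],P+E[\fL^r\overline{\fL}^b])$ with $1\le r\le s-1$. The aim is to exhibit two blue primary vertices joined by a green path of length $t$ through $v$.

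The key step is to examine the green cycle of length $h_2$ through $v$, i.e.\ the vertices obtained from $v$ by applying $\overline{\fL}$ repeatedly; the $j$-th vertex on this cycle is $\fL^r\overline{\fL}^{j+b}v_1$ (using the ideal-action shorthand where $\fL^a\overline{\fL}^b v_1$ denotes $(E/E[\fL^a\overline{\fL}^b],P+E[\fL^a\overline{\fL}^b])$). I claim this vertex lies on the blue cycle through $v_1$ exactly when $t\mid j+b$. Indeed, iterating the relation $\fL^s v_1=\overline{\fL}^{ct}v_1$ from Proposition~\ref{prop:st} and using $\gcd(c,h_2/t)=1$ to invert $c$ modulo $h_2/t$, one finds $\overline{\fL}^{kt}v_1=\fL^{\alpha s}v_1$ for every integer $k$, with $\alpha\equiv kc^{-1}\pmod{h_1/s}$; so the vertices at positions divisible by $t$ on the green cycle through $v_1$ are exactly the central vertices, and in particular lie on the blue cycle. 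When $t\mid j+b$, write $j+b=kt$, and compute
\[
\fL^r\overline{\fL}^{kt}v_1=\fL^r\fL^{\alpha s}v_1=\fL^{\alpha s+r}v_1,
\]
which is blue primary because $1\le r\le s-1$ forces $s\nmid \alpha s+r$. The smallest positive $j$ with $t\mid j+b$ is $i_+=t-(b\bmod t)\in\{1,\ldots,t-1\}$, and going backward the closest such $j$ is $-i_-$ with $i_-=b\bmod t\in\{1,\ldots,t-1\}$. Since $i_++i_-=t$, the vertex $v$ lies on a green path of length $t$ between the two blue primary vertices $\overline{\fL}^{i_+}v$ and $\overline{\fL}^{-i_-}v$, so $v$ is green secondary.

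The main technical point is the identification $\overline{\fL}^{kt}v_1=\fL^{\alpha s}v_1$ between the two cyclic parametrizations of the set of central vertices. This uses both conclusions of Proposition~\ref{prop:st}: the equality $\fL^s v_1=\overline{\fL}^{ct}v_1$ and the coprimality $\gcd(c,h_2/t)=1$. Once this identification is in hand, the combinatorics of choosing $i_+$ and $i_-$ is routine.
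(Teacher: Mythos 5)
Your proof is correct and follows essentially the same route as the paper: reduce to one direction via the $\fL\leftrightarrow\overline{\fL}$ symmetry, then use Proposition~\ref{prop:st} to locate the blue primary vertices on the green cycle through $v$ and conclude that $v$ lies strictly between two of them at green-distance $t$ apart. The paper's write-up packages this as a circuit of $h_1$ blue and $h_2$ green edges returning to $v$ (and contains a misprint ``$v''=v'$'' that should read ``$v''=v$''), whereas you work directly with the $\fL^a\overline{\fL}^b$-parametrization and the distances $i_\pm$ with $i_++i_-=t$, but the underlying argument is the same.
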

 \begin{proof}
     Let $v$ be  a blue secondary vertex. Let $v'$ be a green primary vertex such that there is a blue path of length $1\le a\le s-1$ from $v$ to $v'$. Let $w$ be a central vertex such that there is a green path of length $1\le a'\le t-1$ form $v'$ to $w$. Let $w'$ be the blue vertex at the end of a blue path of length $s (h_1/s-1)+(s-a)$ starting at $w$. 
     
     There is also a green path of length $t(h_2/t-1)+(t-a')$. The end point $v''$ of this path is a green secondary vertex. Putting these together, we have a path starting at $v$ with $a+s(h_1/s-1)+s-a=h_1$ blue and $h_2$ green edges. Thus, $v''=v'$.
 \end{proof}

\begin{remark}\label{rk:conclude}
To conclude, we may classify the vertices in $\cC_0$ as follows.
\begin{itemize}\setlength\itemsep{0.5em}
    \item We have $h_1/s=h_2/t$ central vertices, which include $v_1$;
    \item The central vertices can be equally distributed  along a blue closed path of length $h_1$ containing $v_1$ (the distance between two consecutive central vertices is $s$). The non-central vertices on this path are  blue primary. There are $\frac{h_1(s-1)}{s}$ such vertices;
    \item The central vertices can also be found along a green closed path  of length $h_1$ containing $v_1$. The non-central vertices on this path are  green primary. There are $\frac{h_2(t-1)}t$ such vertices;
    \item The rest of the vertices are blue secondary, which can be found on a blue path between two primary green vertices. There are  
    \[
    \frac{h_1(s-1)(t-1)}{s}=\frac{h_2(s-1)(t-1)}{t}
    \]
    such vertices;
    \item We may reverse the roles of blue and green in the previous bullet point, resulting in the same vertices.
    \item In total, there are $st\Omega$ vertices, where $\Omega=\frac{h_1}{s}=\frac{h_2}{t}$ is the number of central vertices.\e
\end{itemize}    
\end{remark}

\begin{example}\label{eg12}
Suppose that $h_1=h_2=6$, $s=t=2$ and $c=1$. We have the following graph. The "solid" cycles are obtained  from repeatedly applying $\cL$ and $\overline\cL$  to $v_1$, respectively. This gives us the central vertices  $v_1,v_2$ and $v_3$ (coloured in orange), the blue primary vertices are $v_4,v_5$ and $v_6$, and the green primary vertices are $v_7$, $v_8$ and $v_9$. The "dotted" cycles are the ones obtained from applying $\cL$ and $\overline\cL$ to the primary vertices.  The secondary vertices are $v_{10}$, $v_{11}$ and $v_{12}$ (coloured in black). 

\begin{center}
\begin{tikzpicture}[main/.style = {draw, circle},sec/.style={draw,rectangle}] 
\node[main,orange] at (1,1)(1) {$v_1$}; 
\node[main,orange] (2) at (3.5,0) {$v_2$};
\node[main,orange] (3) at (1,-1) {$v_3$}; 
\node[main,blue] (4) at (2.5,1) {$v_4$};
\node[main,blue] (5) at (2.5,-1) {$v_5$}; 
\node[main,blue]  at (0,0)(6)  {$v_6$};
\node[main,Green] (7) at (3.5,2) {$v_7$};
\node[main,Green] (8) at (3.5,-2) {$v_8$};
\node[main,Green] (9) at (-1.5,0) {$v_9$};
\node[sec] (10)  at (0,2) {$v_{10}$};
\node[sec] (11) at (5,0) {$v_{11}$};
\node[sec] (12) at (0,-2) {$v_{12}$};
\draw[->,blue] (1) -- (4); 
\draw[->,blue] (4) -- (2); 
\draw[->,blue] (2) -- (5); 
\draw[->,blue] (5) -- (3); 
\draw[->,blue] (3) -- (6); 
\draw[->,blue] (6) -- (1); 
\draw[->,dashed,blue] (10) -- (7); 
\draw[->,dashed,blue] (7) -- (11); 
\draw[->,dashed,blue] (11) -- (8);
\draw[->,dashed,blue] (8) -- (12); 
\draw[->,dashed,blue] (12) -- (9); 
\draw[->,dashed,blue] (9) -- (10); 
\draw[->,dashed,Green] (10) -- (4); 
\draw[->,dashed,Green] (4) -- (11); 
\draw[->,dashed,Green] (11) -- (5);
\draw[->,dashed,Green] (5) -- (12);
\draw[->,dashed,Green] (12) -- (6);
\draw[->,dashed,Green] (6) -- (10);
\draw[->,Green] (9) -- (1);
\draw[->,Green] (1) -- (7);
\draw[->,Green] (7) -- (2);
\draw[->,Green] (2) -- (8);
\draw[->,Green] (8) -- (3);
\draw[->,Green] (3) -- (9);
\end{tikzpicture}     
\end{center}

\end{example} 

\begin{example}\label{eg24}
    Suppose $h_1=12$, $h_2=6$, $s=4$, $t=2$ and $c=1$. 
    
    \begin{center}
\begin{tikzpicture}[main/.style = {draw, circle},sec/.style={draw,rectangle},scale=0.85] 
\node[main,orange] at (4,6)(1) {$v_1$}; 
\node[main,orange] (2) at (10,0) {$v_2$};
\node[main,orange] (3) at (2,-2) {$v_3$}; 
\node[main,blue] (4) at (6,6) {$v_4$};
\node[main,blue] (5) at (8,4) {$v_5$}; 
\node[main,blue]  at (10,2)(6)  {$v_6$};
\node[main,blue] (7) at (8,-2) {$v_7$};
\node[main,blue] (8) at (6,-4) {$v_8$};
\node[main,blue] (9) at (4,-4) {$v_9$};
\node[main,blue] (10)  at (0,0) {$v_{10}$};
\node[main,blue] (11) at (0,2) {$v_{11}$};
\node[main,blue] (12) at (2,4) {$v_{12}$};
\node[main,Green] (13) at (10,6) {$v_{13}$};
\node[main,Green] (14) at (8,-6) {$v_{14}$};
\node[main,Green] (15) at (-2,4) {$v_{15}$};
\node[sec] (16) at (0,6) {$v_{16}$};
\node[sec] (17) at (2,8) {$v_{17}$};
\node[sec] (18) at (8,8) {$v_{18}$};
\node[sec] (19) at (12,4) {$v_{19}$};
\node[sec] (20) at (12,-2) {$v_{20}$};
\node[sec] (21) at (10,-4) {$v_{21}$};
\node[sec] (22) at (2,-6) {$v_{22}$};
\node[sec] (23) at (0,-4) {$v_{23}$};
\node[sec] (24) at (-2,-2) {$v_{24}$};

\draw[->,blue] (1) -- (4); 
\draw[->,blue] (4) -- (5);
\draw[->,blue] (5) -- (6);
\draw[->,blue] (6) -- (2); 
\draw[->,blue] (2) -- (7); 
\draw[->,blue] (7) -- (8); 
\draw[->,blue] (8) -- (9); 
\draw[->,blue] (9) -- (3);
\draw[->,blue] (3) -- (10); 
\draw[->,blue] (10) -- (11);
\draw[->,blue] (11) -- (12); 
\draw[->,blue] (12) -- (1); 
\draw[->,Green] (1) to [out=30] (13);
\draw[->,Green] (13) to [in=30,out=-45] (2);
\draw[->,Green] (2) to (14);
\draw[->,Green] (14) to [in=-90,out=-180] (3);
\draw[->,Green] (3) to [in=-90,out=-180] (15);
\draw[->,Green] (15) to  (1);

\newcommand{\bd}{\draw[->,dashed,blue]}
\bd (15) to (16);
\bd (16) to (17);
\bd (17) to (18);
\bd (18) to (13);
\bd (13) to (19);
\bd (19) to (20);
\bd (20) to (21);
\bd (21) to (14);
\bd (14) to (22);
\bd (22) to (23);
\bd (23) to (24);
\bd (24) to (15);
\newcommand{\gd}{\draw[->,dashed,Green]}
\gd (9) to (24);
\gd (24) to [in=-180,out=90] (12);
\gd (12) to [in=-180,out=90](18);
\gd (18) to (6);
\gd (6) to [out=-30,in=60] (21);
\gd (21) to [out=-150,in=-60](9);

\newcommand{\gt}{\draw[->,dotted,Green]}
\gt (16) to [out=30] (4);
\gt (4) to (19);
\gt (19) to [in=-30,out=270] (7);
\gt (7) to [in=10,out=-90] (22);
\gt (22) to (10);
\gt (10) to [out=150,in=-120] (16);

\newcommand{\gD}{\draw[->, dash pattern = on 1 pt off 2 pt on 3 pt off 2 pt ,Green]}
\gD (17) to [out=0,in=90] (5);
\gD (5) to [out=0,in=90] (20);
\gD (20) to (8);
\gD (8) to [out=240,in=330] (23);
\gD (23) to [out=135,in=210] (11);
\gD (11) to (17);
\end{tikzpicture}     
\end{center}

    The central vertices are $v_1$,$v_2$ and $v_3$, which are once again coloured orange. There are $9$ blue primary vertices ($v_4$ to $v_{12}$), $3$ green primary ones ($v_{13}$ to $v_{15}$) and $9$ secondary vertices ($v_{16}$ to $v_{24}$).

\end{example}

\subsection{A special case}
In this section, still assuming $l$ is a split prime,  we specialize to the case where $N=1$  and the ideals of $\cO$ above $l$ are principal. In this case, we can describe the structure of $\cC_0$  more precisely and give a less combinatorial proof.
 \begin{proposition}\label{prop:principal}
          Let $E$ be an elliptic curve of level zero and $P$ a point on $E$ of order $p^m$. Let $\mathcal{O}$ be the endomorphism ring of $E$. Assume that the two ideals above $l$ in $\mathcal{O}$ are principal ideals $\fL=(x)$ and $\overline\fL=(\overline{x})$. Let $\cC_0$ be the connected component of $\cC(G_1^m)$ containing $(E,P)$. Let $V(\cC_0)=\{v_1,\ldots, v_u\}$. Assume that $\cC_0$ does not contain any loops. Then $\cC_0$ satisfies one of the following conditions.
   
         \item[1)] For every $1\le i\le u$, there is an edge from $v_i$ to $v_{i+1}$ (here we consider the indices modulo $u$). Furthermore, there exists  $r\in\{1,2,\ldots, u\}$ such that there is an edge from $v_i$ to $v_{i+r}$ for $1\le i\le u$.
         
         \item [2)] We have $\textup{lcm}(h_1,h_2)=u$, where $h_1$ and $h_2$ are given as in Definition~\ref{def:GNm}. Let $t_i=u/h_i$. Then there is an integer $r$ that is coprime to $u$  such that for $1\le i \le u$ the targets of the edges whose source is $v_i$ are given by  $v_{i+t_1}$ and  $v_{i+rt_2}$, respectively.
    
 \end{proposition}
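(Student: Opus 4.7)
The plan is to exploit the principal-ideal hypothesis to convert the action of $\mathfrak L$ and $\overline{\mathfrak L}$ on $\cC_0$ into an explicit cyclic-group translation, and then classify the resulting graph by elementary cyclic-subgroup theory.

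Since $\mathfrak L=(x)$ is principal, the blue isogeny $E\to E/E[\mathfrak L]$ is, after canonical identification, the endomorphism $[x]\colon E\to E$ that sends $(E,Q)$ to $(E,xQ)$; analogously, green edges correspond to multiplication by $\overline x$. Using Remark~\ref{rk:dual-isogeny} to identify $E[\overline{\mathfrak p}^m]\cong \mathbb Z/p^m\mathbb Z$ via $P$, the elements $x,\overline x$ reduce to units $\alpha,\beta\in(\mathbb Z/p^m\mathbb Z)^\times$, and two vertices $(E,\gamma_1 P)$, $(E,\gamma_2 P)$ coincide iff $\gamma_1\gamma_2^{-1}$ lies in the image $A$ of $\Aut(E)$ in $(\mathbb Z/p^m\mathbb Z)^\times$. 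Because $p$ is odd, $(\mathbb Z/p^m\mathbb Z)^\times$ is cyclic, hence so is $C:=\langle\alpha,\beta,A\rangle/A$, of some order $u$; moreover $V(\cC_0)$ is in bijection with $C$, and $\cC_0$ becomes the Cayley-style digraph on $C$ in which each vertex $v$ has one outgoing blue edge to $v\bar\alpha$ and one outgoing green edge to $v\bar\beta$. The hypothesis that $\cC_0$ contains no loops translates to $\bar\alpha\neq 1$ and $\bar\beta\neq 1$, and connectedness of $\cC_0$ forces $\langle\bar\alpha,\bar\beta\rangle=C$, equivalently $\operatorname{lcm}(h_1,h_2)=u$.

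Fix a generator $g\in C$ and write $\bar\alpha=g^a$, $\bar\beta=g^b$ with $0<a,b<u$, so that $h_1=u/\gcd(a,u)$, $h_2=u/\gcd(b,u)$, and $\gcd(\gcd(a,u),\gcd(b,u))=1$. If one of $\gcd(a,u),\gcd(b,u)$ equals $1$---say $\gcd(a,u)=1$, so that $\bar\alpha$ generates $C$---then relabeling $v_i:=(E,\bar\alpha^iP)$ makes the blue shift equal to $1$ and the green shift equal to the unique $r\in\{1,\ldots,u\}$ with $\bar\beta=\bar\alpha^r$; this is case~(1). Otherwise $t_1:=\gcd(a,u)=u/h_1>1$ and $t_2:=\gcd(b,u)=u/h_2>1$ are coprime divisors of $u$; writing $a=t_1k$ and $b=t_2m$ with $\gcd(k,h_1)=\gcd(m,h_2)=1$, the Chinese Remainder Theorem lets us pick $c\in\mathbb Z$ coprime to $u$ with $c\equiv k\pmod{h_1}$. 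Relabeling $v_i:=(E,g^{ci}P)$ then makes the blue shift exactly $c^{-1}a\equiv t_1\pmod u$ and the green shift $c^{-1}b\equiv t_2(c^{-1}m)\pmod u$; a second CRT lift replaces $c^{-1}m\bmod h_2$ by a representative $r$ coprime to $u$ in the same residue class, giving case~(2).

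The main obstacle I anticipate is bookkeeping, particularly in the first CRT step when $h_1$ and $t_1$ share common prime factors: one must verify that the simultaneous conditions $c\equiv k\pmod{h_1}$ and $\gcd(c,u)=1$ are compatible, which reduces cleanly to $\gcd(k,h_1)=1$ at primes dividing $h_1$ and leaves primes dividing only $t_1$ unconstrained. A secondary subtlety is accommodating the cases $j(E)\in\{0,1728\}$, where $\Aut(E)$ is strictly larger than $\{\pm 1\}$; this is absorbed painlessly by the quotient by $A$, since every finite subgroup of the cyclic group $(\mathbb Z/p^m\mathbb Z)^\times$ (with $p$ odd) yields a cyclic quotient.
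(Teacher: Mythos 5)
Your proof is correct and follows essentially the same strategy as the paper's: you identify $V(\cC_0)$ with the cyclic group generated by the images of $x$ and $\overline{x}$ in $(\cO/\overline{\mathfrak p}^m)^\times$ modulo the image of $\Aut(E)$, so that blue and green edges act as translations, and then you normalize coordinates. The only substantive difference is in the bookkeeping for case 2: you choose a generator $g$ of this cyclic group and adjust by CRT to force the blue shift to be $t_1$ and the green shift to be $rt_2$ with $r$ a unit mod $u$, whereas the paper constructs a surjection $\Theta\colon \ZZ/h_1\ZZ\times\ZZ/h_2\ZZ\to\ZZ/u\ZZ$, $(a,b)\mapsto at_1+brt_2$, with kernel generated by $(rt_2,-t_1)$; these are computationally different but mathematically equivalent ways of producing the same relabeling.
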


 \begin{remark}\label{rk:description}
      The order of $x$ (resp. $\overline{x}$) in $(\mathcal{O}/\overline{\mathfrak{p}}^m)^\times/\cO^\times$ is equal to $h_1$ (resp. $h_2$). Here, we have denoted the image of $\cO^\times$ in $(\cO/\overline{\mathfrak p}^m)^\times$ by the same symbol as before.  Suppose that $h_1\ge h_2$. 
      
      We will see in the proof that case 1) occurs when $h_2|h_1$. In this case, $u=h_1$ and the blue edges form the directed cycle $v_1\rightarrow v_2\rightarrow \cdots \rightarrow v_u\rightarrow v_1$. The edges from $v_i$ to $v_{i+r}$ are green. Furthermore, the central vertices are of the form $v_{1+\alpha r}$, $\alpha\in\ZZ$. There are no secondary vertices.
      
      Still assuming $h_1\ge h_2$, if $h_2\nmid h_1$, then case 2) occurs. The edges of the form $v_i\rightarrow v_{i+t_1}$ are blue, whereas those of the form $v_i\rightarrow v_{i+rt_2}$ are green. The central vertices are of the form $v_{1+\alpha rt_1t_2}$, $\alpha\in\ZZ$.\e\end{remark}
 \begin{remark}
     We have seen in the proof of Lemma~\ref{lem:degree-split case} that when $m$ is sufficiently large, the hypothesis that  $\cC_0$ does not admit any loops holds.\e
 \end{remark}
 
\begin{proof} Let $K=\mathcal{O}\otimes \Q$.
    Note that $\textup{Aut}(E[p^m])\cong (\ZZ/p^m\ZZ)^\times$. In particular, the group of automorphisms is cyclic. Let $\mathbf{E}$ be a CM lift of $E$ over $K(j(\mathcal{O}))$ and let $\overline{\mathfrak{p}}$ be a prime ideal above $p$ in $\mathcal{O}$ such that $\mathbf{E}[\overline{\mathfrak{p}}^m]$ reduces to $E[p^m]$ modulo some fixed ideal above $p$ in the ring of integers of $K(j(\mathcal{O}))$. There is a natural ismorphism
    \[\textup{Aut}(E[p^m])\cong (\mathcal{O}/\overline{\mathfrak{p}}^m)^\times.\]

The two horizontal degree $l$ isogenies of $E$ act on the $p^m$-torsion points by $[x]$ and $[\overline{x}]$, respectively. As we have discussed in Remark~\ref{rk:description}, $h_1$ (resp. $h_2$) is the order of $x$ (resp. $\overline{x}$) as an element in $(\mathcal{O}/\overline{\mathfrak{p}}^m)^\times/\mathcal{O}^\times$. Without loss of generality, we assume that $h_1\ge h_2$. 

We first consider the case $h_2\mid h_1$.  As $(\mathcal{O}/\overline{\mathfrak{p}}^m)^\times$ is a cyclic group, there exists an integer $r$ such that the cosets $\mathcal{O}^\times\overline{x}$ and $\mathcal{O}^\times x^r$ in $\cO/ {\overline{\mathfrak{p}}^m}$ coincide. Note that $\textup{gcd}(h_1,r)=h_1/h_2$.

Consider the closed blue path 
\[
C:v_{1}\rightarrow \dots\rightarrow  v_{h_1}\rightarrow v_1
\]
obtained by repeatedly applying $[x]$ to $(E,P)$. If we apply $[\overline x]$ to $v_i$, we obtain a closed green path of the form $v_i\rightarrow v_{i+r}\rightarrow  v_{i+2r}\rightarrow \cdots\rightarrow v_i$. In particular, we see that $C$ contains all vertices of $\cC_0$ and so $u=h_1$ and  $\cC_0$ is described as in 1).

We now consider the case where $h_2$ does not divide $h_1$. Then there exist integers $t_1,t_2>1$ such that $h_1{t_1}=h_2t_2=\textup{lcm}(h_1,h_2)$. Let $$z=\gcd(h_1,h_2)=h_1/t_2=h_2/t_1.$$ 
We see that both $x^{t_2}$ and $\overline x^{t_1}$ have order $z$ as elements in the cyclic group $(\cO/\overline{\mathfrak{p}}^m)^\times/\cO^\times$.  Thus, there exists an integer $r$ coprime to $z$ such that the cosets
$\mathcal{O}^\times \overline{x}^{t_1}$ and $ \mathcal{O}^\times x^{rt_2}$ coincide in $(\cO/\overline{\mathfrak{p}}^m)^\times$.

As $[x]$ and $[\overline{x}]$ commute, each path in $\cC_0$ is given by $[x^a\overline{x}^b]$ for some integers $a$ and $b$. Thus, the number of vertices in $\cC_0$ is given by the cardinality of the subgroup $U$ generated by $x$ and $\overline{x}$ in $(\mathcal{O}/\overline{\mathfrak{p}}^m)^\times/\mathcal{O}^\times$. Let 
\[\Phi\colon \ZZ/h_1\ZZ\times \ZZ/h_2\ZZ\to U,\]
be the surjective group homomorphism given by $(a,b)\mapsto x^a\overline{x}^b$. By the definition of $r$, the kernel of $\Phi$ is generated by $(rt_2,-t_1)$, which generates a cyclic subgroup of $\ZZ/h_1\ZZ\times \ZZ/h_2\ZZ$ of order $z$. It follows that there are in total $u=h_1h_2/z=\textup{lcm}(h_1,h_2)$ vertices in $\cC_0$.

By adding a multiple of $z$ to $r$ if necessary, we may assume that $r$ is coprime to $u$. Consider the group homomorphism
\[\Theta \colon\ZZ/h_1\ZZ\times \ZZ/h_2\ZZ\to \ZZ/u\ZZ, \quad (a,b)\mapsto at_1+brt_2.\]
Since $t_1$ and $rt_2$ are coprime integers,  $\Theta $ is surjective and the kernel is generated by $(rt_2,-t_1)$. Let $(E,P)=v_1$. We enumerate the vertices of $\cC_0$ so that  $v_{\Theta((a,b))}=[x^a\overline{x}^b]v_1$. It then follows that  $[x]$ (resp. $[\overline{x}]$) induces a blue (resp. green) edge from $v_i$ to  $v_{i+t_1}$ (resp.  $v_{i+rt_2}$) as described in 2). 
 \end{proof}

\begin{remark}
    \begin{itemize}
        \item If $x$ and $\overline{x}$ act trivially on $E[p]$, then the order of $x$ and $\overline{x}$ in $\textup{Aut}(E[p^m])$ will always be a $p$-power. In this case $\cC_0$ is  described by 1).
        \item If $m=1$ and $\cC_0$ is described by 2), then $\cC_0$ is described by 2) for all $m\ge1 $.
        \item If $m=2$ and the structure of $\cC_0$ is described by 1), then $\cC_0$ is described by 1) for all $m\ge2$. \e
    \end{itemize}
\end{remark}
\begin{example}
Let $K=\QQ(\sqrt{-5})$ and $p=3$. We consider an elliptic curve $\bE$ with complex multiplication by $\mathcal{O}_K=\ZZ[\sqrt{-5}]$. The two prime ideals above $3$ are $(3, 1+\sqrt{-5})$ and $(3,1-\sqrt{-5})$. We take $\overline{\mathfrak{p}}=(3,1+\sqrt{5})$. Let $l=409$. Then the two ideals above $(409)$ in $\mathcal{O}_K$ are $(2+9\sqrt{-5})$ and $(2-9\sqrt{-5})$. Let $x=2+9\sqrt{-5}$. Then 
\[x\equiv \overline{x}\equiv 2 \pmod {(3,1+\sqrt{-5})^2}.\]
It can be checked that $2$ is indeed a generator of $(\mathcal{O}_K/(3,1+\sqrt{-5})^2)^\times$. It follows that the orders of $x$ and $\overline{x}$ in $(\mathcal{O}_K/(3,1+\sqrt{-5})^m)^\times/\{\pm 1\}$ are $3^{m-1}$. Thus, the graph $\cC_0$ obtained from $\bE\pmod \fp$ is described by case 1) of Proposition~\ref{prop:principal} with $u=3^{m-1}$. \e
\end{example}
\begin{example}
    Let $K=\QQ(\sqrt{-10})$ and $p=13$. We consider again an elliptic curve $\bE$ with complex multiplication by $\mathcal{O}_K=\ZZ+\ZZ\sqrt{-10}$. The two ideals over $13$ are $(13,4+\sqrt{-10})$ and $(13,4-\sqrt{-10})$. Let $\overline{\mathfrak{p}}=(13,4+\sqrt{-10})$. Let $l=11$. The two ideals over $(11)$ are given by $(1+\sqrt{-10})$ and $(1-\sqrt{-10})$. Let $x=1+\sqrt{-10}$. Then \[x\equiv -3\pmod {(13,4+\sqrt{-10})}\] and \[\overline{x}\equiv 5\pmod {(13,4+\sqrt{-10})}.\] The order of $-3$ in $(\mathcal{O}_K/(13,4+\sqrt{-10}))^\times /\{\pm 1\}$ is $3$, while the order of $5$ is $2$. It follows that for $m=1$, the graph $\cC_0$ obtained from $\bE\pmod {\overline{\fp}}$ is described by case 2) of Proposition~\ref{prop:principal} with $u=6$, $h_1=3$ and $h_2=2$.  \e
\end{example}

We conclude this section with a number of illustrations of several graphs given by Proposition~\ref{prop:principal}.

\subsection*{Case 1) with $u=5$} Since we have assumed that there is no loop, we have $h_1,h_2>1$. Thus, in order for case 1) to occur, we must have $h_1=h_2=5$. Every vertex is central and we have the complete graph $K_5$ (after ignoring the directions). Depending on the value of $r$, we have one of the following two graphs.
 \begin{center}\begin{tikzpicture}[vertex/.style={circle, draw}] 
\foreach \X[count=\Y] in {blue,red,green,yellow,green}
{\node[draw,circle,orange] (x-\Y) at ({72*\Y+18}:2){$v_\Y$}; }
\foreach \X[count=\Y] in {0,...,4}
{\ifnum\X=0
\draw[->,blue] (x-5) --(x-\Y) ;
\else
\draw[->,blue] (x-\X) --(x-\Y) ;
\fi}
\draw[->,Green] (x-1)--(x-3);
\draw[->,Green] (x-3)--(x-5);
\draw[->,Green] (x-5)--(x-2);
\draw[->,Green] (x-2)--(x-4);
\draw[->,Green] (x-4)--(x-1);
\end{tikzpicture}
\qquad
\begin{tikzpicture}[vertex/.style={circle, draw}]
\foreach \X[count=\Y] in {blue,red,green,yellow,green}
{\node[draw,circle,orange] (x-\Y) at ({72*\Y+18}:2){$v_\Y$}; }
\foreach \X[count=\Y] in {0,...,4}
{\ifnum\X=0
\draw[->,blue] (x-5) --(x-\Y) ;
\else
\draw[->,blue] (x-\X) --(x-\Y) ;
\fi}
\draw[->,Green] (x-1)--(x-4);
\draw[->,Green] (x-4)--(x-2);
\draw[->,Green] (x-2)--(x-5);
\draw[->,Green] (x-5)--(x-3);
\draw[->,Green] (x-3)--(x-1);
\end{tikzpicture}
 \end{center}


\subsection*{Case 1) with $u=6$} We have $h_1=6$. We illustrate the cases where $h_2=3$  and $h_2=2$ below. When $h_2=3$, $r$ can be either $2$ or $4$.

\begin{center}
\begin{tikzpicture}[vertex/.style={circle, draw}] 
\foreach \X[count=\Y] in {orange,blue,orange,blue,orange,blue}
{\node[draw,circle,\X] (x-\Y) at ({60*\Y+60}:1.5){$v_\Y$}; }
\foreach \X[count=\Y] in {0,...,5}
{\ifnum\X=0
\draw[->,blue] (x-6) --(x-\Y) ;
\else
\draw[->,blue] (x-\X) --(x-\Y) ;
\fi}
\draw[->,Green] (x-1)--(x-3);
\draw[->,Green] (x-3)--(x-5);
\draw[->,Green] (x-5)--(x-1);
\draw[->,dashed,Green] (x-2)--(x-4);
\draw[->,dashed,Green] (x-4)--(x-6);
\draw[->,dashed,Green] (x-6)--(x-2);
\end{tikzpicture}
\quad
\begin{tikzpicture}[vertex/.style={circle, draw}] 
\foreach \X[count=\Y] in {orange,blue,orange,blue,orange,blue}
{\node[draw,circle,\X] (x-\Y) at ({60*\Y+60}:1.5){$v_\Y$}; }
\foreach \X[count=\Y] in {0,...,5}
{\ifnum\X=0
\draw[->,blue] (x-6) --(x-\Y) ;
\else
\draw[->,blue] (x-\X) --(x-\Y) ;
\fi}
\draw[->,Green] (x-1)--(x-5);
\draw[->,Green] (x-5)--(x-3);
\draw[->,Green] (x-3)--(x-1);
\draw[->,dashed,Green] (x-2)--(x-6);
\draw[->,dashed,Green] (x-6)--(x-4);
\draw[->,dashed,Green] (x-4)--(x-2);
\end{tikzpicture}
\quad
\begin{tikzpicture}[vertex/.style={circle, draw}] 
\foreach \X[count=\Y] in {orange,blue,blue,orange,blue,blue}
{\node[draw,circle,\X] (x-\Y) at ({60*\Y+60}:1.5){$v_\Y$}; }
\foreach \X[count=\Y] in {0,...,5}
{\ifnum\X=0
\draw[->,blue] (x-6) --(x-\Y) ;
\else
\draw[->,blue] (x-\X) --(x-\Y) ;
\fi}
\draw[->,Green] (x-1) to [bend right] (x-4);
\draw[<-,Green] (x-1) to [bend left] (x-4);
\draw[->,dashed,Green] (x-5) to [bend right] (x-2);
\draw[<-,dashed,Green] (x-5) to [bend left] (x-2);
\draw[->,dashed,Green] (x-3) to [bend right] (x-6);
\draw[<-,dashed,Green] (x-3) to [bend left] (x-6);
\end{tikzpicture} \end{center}

\subsection*{Case 1) with $u=12$}  Suppose that $h_1=12$, $h_2=4$ with $r=3$. We have the following graph.
 \begin{center}
     \begin{tikzpicture}[vertex/.style={circle, draw}] 
\foreach \X[count=\Y] in {orange,blue,blue,orange,blue,blue,orange,blue,blue,orange,blue,blue}
{\node[draw,circle,\X] (x-\Y) at ({30*\Y+60}:3.5){$v_{\Y}$}; }
\foreach \X[count=\Y] in {0,...,11}
{\ifnum\X=0
\draw[->,blue] (x-12) --(x-\Y) ;
\else
\draw[->,blue] (x-\X) --(x-\Y) ;
\fi}
\draw[->,Green] (x-1)--(x-4);
\draw[->,Green] (x-4)--(x-7);
\draw[->,Green] (x-7)--(x-10);
\draw[->,Green] (x-10)--(x-1);
\draw[->,dashed,Green] (x-2)--(x-5);
\draw[->,dashed,Green] (x-5)--(x-8);
\draw[->,dashed,Green] (x-8)--(x-11);
\draw[->,dashed,Green] (x-11)--(x-2);
\draw[->,dotted,Green] (x-3)--(x-6);
\draw[->,dotted,Green] (x-6)--(x-9);
\draw[->,dotted,Green] (x-9)--(x-12);
\draw[->,dotted,Green] (x-12)--(x-3);
\end{tikzpicture}
 \end{center}
If $r=9$, then the directions of the green edges are reversed. 

\subsection*{Case 2) with $u=6$}  Suppose that $h_1=3$, $h_2=2$ and $r=1$. Then $t_1=2$ and $t_2=3$. We have the following graph:

\begin{center}
 \begin{tikzpicture}[vertex/.style={circle, draw}] 

\node[draw,circle,orange] (x-1) at  (120:2){$v_1$};
\node[draw,rectangle,black] (x-2) at  (180:2){$v_2$};
\node[draw,circle,blue] (x-3) at  (240:2){$v_3$};
\node[draw,circle,Green] (x-4) at  (300:2){$v_4$};
\node[draw,circle,blue] (x-5) at  (360:2){$v_5$};
\node[draw,rectangle,black] (x-6) at  (60:2){$v_6$};

\draw[<->,Green] (x-1)--(x-4);
\draw[->,blue] (x-1)--(x-3);
\draw[->,blue] (x-3)--(x-5);
\draw[->,blue] (x-5)--(x-1);
\draw[->,dashed,blue] (x-2)--(x-4);
\draw[->,dashed,blue] (x-4)--(x-6);
\draw[->,dashed,blue] (x-6)--(x-2);
\draw[<->,Green] (x-2)--(x-5);
\draw[<->,Green] (x-3)--(x-6);
\end{tikzpicture}
\end{center}

\subsection*{Case 2) with $u=12$} Suppose that $h_1=4$, $h_2=3$ and $r=1$. Then $t_1=3$, $t_2=4$, resulting in the following graph:

\begin{center}
     \begin{tikzpicture}[vertex/.style={circle, draw}]

\node[draw,circle,orange] (x-1) at  (90:4){$v_1$};
\node[draw,rectangle,black] (x-2) at  (120:4){$v_2$};
\node[draw,rectangle,black] (x-3) at  (150:4){$v_3$};
\node[draw,circle,blue] (x-4) at  (180:4){$v_4$};
\node[draw,circle,Green] (x-5) at  (210:4){$v_5$};
\node[draw,rectangle,black] (x-6) at  (240:4){$v_6$};
\node[draw,circle,blue] (x-7) at  (270:4){$v_7$};
\node[draw,rectangle,black] (x-8) at  (300:4){$v_8$};
\node[draw,circle,Green] (x-9) at  (330:4){$v_9$};
\node[draw,circle,blue] (x-10) at  (360:4){$v_{10}$};
\node[draw,rectangle,black] (x-11) at  (30:4){$v_{11}$};
\node[draw,rectangle,black] (x-12) at  (60:4){$v_{12}$};

\draw[->,blue] (x-1)--(x-4);
\draw[->,blue] (x-4)--(x-7);
\draw[->,blue] (x-7)--(x-10);
\draw[->,blue] (x-10)--(x-1);
\draw[->,Green] (x-1)--(x-5);
\draw[->,Green] (x-5)--(x-9);
\draw[->,Green] (x-9)--(x-1);

\draw[->,dotted,blue] (x-2)--(x-5);
\draw[->,dotted,blue] (x-5)--(x-8);
\draw[->,dotted,blue] (x-8)--(x-11);
\draw[->,dotted,blue] (x-11)--(x-2);
\draw[->,dotted,Green] (x-2)--(x-6);
\draw[->,dotted,Green] (x-6)--(x-10);
\draw[->,dotted,Green] (x-10)--(x-2);

\draw[->,dashed,blue] (x-3)--(x-6);
\draw[->,dashed,blue] (x-6)--(x-9);
\draw[->,dashed,blue] (x-9)--(x-12);
\draw[->,dashed,blue] (x-12)--(x-3);
\draw[->,dashed,Green] (x-3)--(x-7);
\draw[->,dashed,Green] (x-7)--(x-11);
\draw[->,dashed,Green] (x-11)--(x-3);

\draw[->, dash pattern = on 1 pt off 2 pt on 3 pt off 2 pt ,Green] (x-4)--(x-8);
\draw[->, dash pattern = on 1 pt off 2 pt on 3 pt off 2 pt ,Green] (x-8)--(x-12);
\draw[->, dash pattern = on 1 pt off 2 pt on 3 pt off 2 pt ,Green] (x-12)--(x-4);

\end{tikzpicture}
 \end{center}

\section{An inverse problem for craters}\label{S:inverse}
The goal of this section is to prove Theorem~\ref{thmB} stated in the introduction. In particular, we study an inverse problem for the graphs arising in \S\ref{S:split-crater}. We first introduce the following  definition of graphs, which can be regarded as a partial generalization of abstract volcano graphs introduced in \cite[Definition~4.1]{pazuki}.
\begin{defn}\label{def:crater}
  Let $\r,\s,\t,\cc$ be nonnegative integers. We say that a directed graph is an \textbf{abstract tectonic crater} of parameters $(\r,\s,\t,\cc)$ if it satisfies
    \begin{itemize}
  \item[a)] There are $\r\s\t$ vertices;
  \item[b)] Each edge is assigned a color -- blue or green;
  \item[c)] At each vertex $v$, there is exactly one blue edge with $v$ as the source, and exactly one blue edge with $v$ as the target, and similarly for green edges;
  \item[d)] Starting at each vertex, there is exactly one closed blue (resp. green) path without backtracks of length $\r\s$ (resp. $\r\t$);
  \item[e)] After every $\s$ (resp. $\cc\t$) steps in the the closed blue (resp. green) paths given in d), the two paths meet at a common vertex.  
  \end{itemize}
\end{defn}
We now prove Theorem~\ref{thmB}.
\begin{theorem}\label{thm:inverse}
Let  $G$ be an abstract tectonic crater. There exist infinitely many pairs of distinct primes $p $ and $l$, and nonnegative integers $N$ such that one of the connected components of the crater of the $l$-isogeny graph $G_N^1$ (over $\mathbb{F}_p$) is isomorphic to $G$.
\end{theorem}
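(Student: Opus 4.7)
The plan is to translate the combinatorial parameters $(\r,\s,\t,\cc)$ of $G$ into arithmetic conditions on a Frobenius element in an abelian extension of an imaginary quadratic field $K$, and then to realize these conditions via Chebotarev's density theorem. By Proposition~\ref{prop:st} and the classification in \S\ref{S:split-crater}, a connected component of $\cC(G_N^1)$ isomorphic to $G$ is produced by any level-zero vertex $(E,P)$ whose CM order $\mathcal{O}$ contains principal split primes $\mathfrak{L}=(x)$ and $\overline{\mathfrak{L}}=(\overline x)$ above some rational prime $l$, such that in the quotient $A:=(\mathcal{O}/N\overline{\mathfrak{p}})^\times/\mathcal{O}^\times$ one has $\ord(x)=\r\s$, $\ord(\overline x)=\r\t$, and $\s$ is the minimal positive integer with $x^\s=\overline x^{\cc\t}$ in $A$.

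First I would fix an imaginary quadratic field $K$ of class number one with $\mathcal{O}_K^\times=\{\pm1\}$ (so $K\notin\{\QQ(\sqrt{-1}),\QQ(\sqrt{-3})\}$, automatically avoiding $j\in\{0,1728\}$), a CM elliptic curve $\bE/K$ with $\End(\bE)=\mathcal{O}_K$, and an odd rational prime $p$ that splits in $K$ as $\mathfrak{p}\overline{\mathfrak{p}}$ and does not divide the discriminant of $\bE$. Next I would choose $N$ coprime to $pl$ as a product of rational primes $q_i$ each split in $K$, selected so that, via the Chinese Remainder Theorem and Dirichlet's theorem on primes in arithmetic progressions, $A$ contains elements $x_0,\overline x_0$ generating a subgroup isomorphic to
\[
U=\langle x,\overline x\mid x^{\r\s}=\overline x^{\r\t}=1,\ x^\s=\overline x^{\cc\t}\rangle
\]
of order $\r\s\t$. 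Then, using the fact that $L:=K(\bE[Np])$ is abelian over $K$ and that the Frobenius of a principal degree-one prime $\mathfrak{L}=(x)$ acts on $\bE[Np]$ as multiplication by $x$ (up to $\mathcal{O}_K^\times$), I would apply Chebotarev's density theorem to $L/K$ to produce infinitely many principal primes $\mathfrak{L}=(x)$ for which $x\equiv x_0$ and simultaneously $\overline x=l/x\equiv \overline x_0$ in $A$; this compatibility amounts to a single Chebotarev condition, since both $x$ and $\overline x=lx^{-1}$ are controlled by the Frobenius at $\mathfrak{L}$ in $K(\bE[Np])/K$ (which contains the $\mathfrak{p}$-torsion via complex conjugation). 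Reducing $\bE$ modulo a prime above $\mathfrak{p}$ in $L$ yields an ordinary curve $E/\mathbb{F}_p$ together with a point $P$ of order $Np$, and the analysis of \S\ref{S:split-crater} identifies the connected component of $(E,P)$ in $\cC(G_N^1)$ with $G$; loops (which would conflict with Definition~\ref{def:crater}) are ruled out by enlarging $N$ per Lemma~\ref{lem:degree-split case}.

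The main obstacle is the group-theoretic realization step: given arbitrary parameters $(\r,\s,\t,\cc)$, one must realize the abstract group $U$ as a subgroup of $A$ for some admissible $N$. This reduces to finding, via Dirichlet's theorem, split primes $q_i$ whose cyclic residue groups accommodate the prescribed orders and relation componentwise, and verifying that $\s$ is indeed the minimal such integer (so that no smaller relation exists inside $A$). Once this algebraic input is secured, the Chebotarev and reduction steps are standard consequences of the theory of complex multiplication, and the infinitude of the family follows directly from the infinitude of primes in any Chebotarev class.
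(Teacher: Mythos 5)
Your proposal takes essentially the same approach as the paper: fix a CM elliptic curve $\bE$, choose $p$, $N$ and an appropriate Chebotarev condition on a split principal prime $\mathfrak{L}=(x)$ so that the actions of $x$ and $\overline{x}$ on a point of order $Np$ in the reduction of $\bE$ reproduce exactly the orbit structure of $G$, then reduce modulo a prime above $p$. The group-theoretic realization step that you flag as the main obstacle is carried out in the paper by the explicit choice $N=5N'M'$ with $N'\equiv 1\pmod{\s}$, $M'\equiv 1\pmod{\t}$ split in $K$ and $p\equiv 1\pmod{\r}$, placing elements of orders $\s$, $\t$, $\r$ in the mutually independent slots $(\mathcal{O}_K/\mathfrak{N})^\times$, $(\mathcal{O}_K/\overline{\mathfrak{M}})^\times$, $(\mathcal{O}_K/\overline{\mathfrak{p}})^\times$, with the auxiliary factor $5$ forcing $\mathfrak{L}$ to split in $K(\bE[5])$ and thereby eliminating the $\pm 1$ ambiguity in the orbit-intersection count.
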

\begin{remark}
    We emphasize that an abstract tectonic crater can never describe a connected component of $G_1^0$. Indeed, each vertex $v$ in an abstract tectonic admits $4$ edges, two with source $v$ and to with target $v$. In $G_1^0$ each vertex admits at most two vertices. Thus, the level structure is crucial for the above result.
\end{remark}
\begin{proof}
Let $(\Omega,s,t,c)$ be the parameters of $G$. We shall construct a $\cC_0$ that is isomorphic to $G$ where the symbols $\Omega,s,t,c$ have the same significations as those assigned in Remark~\ref{rk:conclude}.

Let $K$ be an imaginary quadratic field different from $\Q(\sqrt{-1})$ and $\Q(\sqrt{-3})$. Let $\mathfrak{f}$ be an ideal coprime to $10$ and let $F=K(\mathfrak{f})$ be the ray class field of conductor $\mathfrak f$. Then there exists an elliptic curve $\mathbf{E}/F$ that has complex multiplication by $\mathcal{O}_K$ and such that $K(\mathbf{E}_\textup{tors})/K$ is abelian (see \cite[Lemma 2]{crisan-mueller} or \cite[Chapter II, 1.4]{shalit}). Let $L'=F(\mathbf{E}[5])$. Then, the theory of complex multiplication tells us that $\Gal(L'/K)\cong (\mathcal{O}_K/(5))^\times /\{\pm 1\}$. 

Let $p$ be a prime number such that
\begin{itemize}
    \item $p\ne 5$;
    \item $p\equiv 1\pmod \Omega$;
    \item $p$ is totally split in $F$.
\end{itemize}
 Then $\mathbf{E}$ has good ordinary reduction at all the primes of $F$ lying above $p$. We fix once and for all a prime $\mathfrak{p}'|p$ of $F$. Let $\mathfrak{p}$ be the unique prime of $K$ lying below $\fp'$. 

Choose  two different prime numbers $N'$ and $M'$ that are coprime to $5p\mathfrak{f}$ and are split in $K$, with $N'\equiv 1\pmod s$ and $M'\equiv 1\pmod t$ . Let $\mathfrak{N}$ (resp.  $\mathfrak{M}$) be a prime ideal of $\mathcal{O}_K$ lying above $N'$ (resp. $M'$). Let $L=F(\bE[5N'M'])$. Define furthermore $L_1=F(\bE[5N'M'\overline{\mathfrak{p}}])$ and $L_2=F(\bE[5N'M'\mathfrak{p}])$. We have the following group isomorphisms
\begin{equation}\label{eq:Gal1}
    \Gal(L_1/L')\cong(\mathcal{O}_K/\mathfrak{N})^\times \times (\mathcal{O}_K/\overline{\mathfrak{N}})^\times \times (\mathcal{O}_K/\mathfrak{M})^\times \times (\mathcal{O}_K/\overline{\mathfrak{M}})^\times\times (\mathcal{O}_K/\overline{\mathfrak{p}})^\times,
\end{equation}
and 
\begin{equation}\label{eq:Gal2}
    \Gal(L_2/L')\cong(\mathcal{O}_K/\mathfrak{N})^\times \times (\mathcal{O}_K/\overline{\mathfrak{N}})^\times \times (\mathcal{O}_K/\mathfrak{M})^\times \times (\mathcal{O}_K/\overline{\mathfrak{M}})^\times\times (\mathcal{O}_K/\mathfrak{p})^\times.
\end{equation}
Furthermore $L_1\bigcap L_2=L$.

By Tchebotarev's theorem, there exists a prime ideal $\mathfrak{L}$ in $\mathcal{O}_K$ such that 
\begin{itemize}
    \item[i)] $\mathfrak{L}$ splits in $L'/K$;
    \item[ii)] $\mathfrak{L}\neq \overline{\mathfrak{L}}$;
    \item[iii)] The Frobenius of $\mathfrak{L}$ in $\Gal(L_1/L')$ gives rise to an element of the form $(a,1,1,b,d)$ on the right-hand side of \eqref{eq:Gal1}, where $\ord(a)=s$, $\ord(b)=t$ and $\ord(d^s)=\Omega$;
    \item[iv)] The Frobenius of $\mathfrak{L}$ in $\Gal(L_2/L')$ gives rise to an element of the form $(a,1,1,b,d')$ on the right-hand side of \eqref{eq:Gal2}, with $\ord(d')^t=\Omega$ and ${d'}^{ct}=d^s$ (after identifying $\mathcal{O}/\overline{\mathfrak{p}}$ with $\mathcal{O}/\mathfrak{p}$ via complex conjugation).
\end{itemize}
Let $\sigma$ (resp. $\tau$) the Frobenius of $\fL$ (resp. $\overline{\fL}$) in $\Gal(L_1/L)$. Note that $\tau$ gives an element of  the form $(1,a,b,1,d')$ on the right-hand side of \eqref{eq:Gal1}.

Let $H_1$ (resp. $H_2$) be the cyclic subgroup of $\Gal(L_1/L')$ generated by $\sigma$ (resp. $\tau$). Let $Q$ be a primitive $5\mathfrak{N}\mathfrak{M}$-torsion point on $\bE$. Then $\sigma$ (resp. $\tau$) acts on $Q$ via $(a,1,1,1,1)$ (resp. $(1,1,b,1,1)$). The orbit of $Q$ under the action of $H_1$ (resp. $H_2$) contains $s$ (resp. $t$) elements. As $\sigma$ and $\tau$ fix $\bE[5]$ by construction, $-Q$ is not contained in either of these orbits and the only point contained in both orbits is $Q$.

Now, let $P$ be a primitive $5\mathfrak{N}\mathfrak{M}\overline{\mathfrak{p}}$-torsion point in $\bE$ such that $pP=Q$. By the conditions iii) and iv),  the orbit of $P$ under the action of $H_1$ contains $s\Omega$ elements, whereas that  under $H_2$ contains $t\Omega$ elements. Again neither of these orbits contains $-P$. Note that $\sigma^s$ is of the form $(a^s,1,1,b^s,d^s)$, while $\tau^{ct}$ is of the form $(1,a^{ct},b^{ct},1,{d'}^{ct})$ by construction, and both elements act on $P$ via $(1,1,1,d^s)$. Therefore, $\sigma^s(P)=\tau^{ct}(P)$ and the orbits of $P$ under $H_1$ and $H_2$ intersect precisely in the set $\{P,\sigma^s P,\sigma^{2s}P,\dots, \sigma^{(\Omega-1)s}P\}$.

We have $\sigma(P)=\phi(P)$, where $\phi$ is the isogeny $\bE\to \bE/\bE[\mathfrak{L}]\cong \bE$ and likewise for $\tau$ (see \cite[Chapter II, 1.3 and 1.4]{shalit}). Let $E=\mathbf
{E}\pmod {\mathfrak{p}'}$, $l$ be the rational prime below $\fL$ and $N=5N'M'$. Then the connected component $\cC_0$ containing $(E,P\pmod {\mathfrak{p}'})$ is precisely the tectonic crater $G$.
\end{proof}

 \begin{remark}
     Let $u=\mathrm{lcm}(h_1,h_2)$ and suppose that there are  $u$ vertices in $\cC_0$. Let $V=\{v_1,\dots,v_u\}$ be the set of vertices. After relabelling if necessary, we have blue edges going from $v_i$ to $v_{i+u/h_1}$ and green edges from $v_i$ to $v_{i+c\cdot u/h_2}$ for some $c$ that is coprime to $u$. 
     
     We have seen in Remark~\ref{rk:conclude} that the total number of vertices is given by $h_2s=h_1t$. Thus, we have $$u=\textup{lcm}(h_1,h_2)=h_2s=h_1t.$$ This happens if the smallest subgroup of $\textup{Aut}(V)$ containing $H_1$ and $H_2$ is cyclic (where $H_1$ and $H_2$ are the subgroups defined in the proof of Theorem~\ref{thm:inverse}). If $H$ has rank two,  the graph contains two blue directed cycles, as in Examples \ref{eg12} and \ref{eg24}.\e
 \end{remark}

\appendix
\section{Voltage assignment}\label{app}
The goal of this appendix is to describe the voltage assignment associated with the tower $(G_1^m)_{m\ge0}$.  Let us recall the definition of a voltage assignment on a directed graph.

\begin{defn}
    Let $X$ be a directed graph, $(G,\cdot)$ an abelian group and $n\ge1$ an integer. A $G$-valued \textbf{voltage assignment} on $X$ is a function $\alpha:\EE(X)\rightarrow G$.

    To each voltage assignment, we define the derived graph $X(G,\alpha)$ whose vertices and edges are given by $V(X)\times G$ and $\EE(X)\times G$ respectively. If  $(e,\sigma)\in \EE(X)\times G$, it links $(s,\sigma)$ to $(t,\sigma\cdot\alpha(e))$,  where $e$ is an edge in $X$ from $s$ to $t$.
\end{defn}
Note that $X(G,\alpha)\rightarrow X$ given by $(x,\sigma)\mapsto x$ is a graph covering.

Let $X=G_1^0$. For each $E\in \cE$, we fix a group isomorphism
\[
\Phi_{E}:E[p^\infty]\rightarrow  \Qp/\Zp.
\]
This is equivalent to fixing a $\Zp$-basis of the $p$-adic Tate module $T_p(E)$. More explicitly,  let $t_E$ be such a basis. Given $P\in E[p^m]$, we have $\Phi_E(P)=a/p^m+\Zp$ for a unique integer $a\in\{0,1,\ldots, p^m-1\}$.
\begin{equation}\label{eq:identify-torsion}
    \Phi_E(P)=a \overline{t_E},
\end{equation}
where $\overline{t_E}$ denotes the image of $t_E$ in $E[p^m]$.

Let us write $Z_m=\frac{1}{p^m}\Zp/\Zp$ and $Z_m^*=Z_m\setminus Z_{m-1}$, which we may identify with $(\ZZ/p^m\ZZ)^\times$.
Then, we may identify the vertices of $G_1^m$ with $G_1^0\times Z_m^*$, given by $(E,P)\mapsto (E,\Phi_E(P))$. Under \eqref{eq:identify-torsion}, $\Phi_E(P)$ is given by the image of $a$ in $(\ZZ/p^m\ZZ)^\times$.

\begin{defn}
    Let $E_1,E_2\in\cE$. Suppose that there exists an $l$-isogeny $\phi:E_1\rightarrow E_2$. For each equivalence class of degree $l$-isogenies we fix one representative $\phi$.
    We write $t_\phi\in\Zp^\times$ to be the unique element such that
    \begin{equation}\label{eq:phi-basis}
        \phi^*(t_{E_1})=t_\phi\cdot t_{E_2}, 
    \end{equation}where $\phi^*:T_p(E_1)\rightarrow T_p(E_2)$ is the $\Zp$-isomorphism induced by $\phi$.
\end{defn}

Let $\phi:E\rightarrow E'$ be an $\ell$-isogeny. This induces an edge $e$ in $G_1^0$. Consider the corresponding edge from $(E,P)$ to $(E',P')$ in $G_1^m$. Then, one can check that
\[
\Phi_E(P)t_\phi=\Phi_{E'}(P').
\]
Therefore, we may identify $G_1^m$ with the voltage graph $X((\ZZ/p^m\ZZ)^\times,\alpha)$, where $\alpha$ is the voltage assignment $\alpha$ on $G_1^m$ sending $\phi$ to $t_\phi\pmod{p^m}$.

\begin{remark}
    While our voltage assignment depends on a choice of basis for each $T_p(E)$ as $E$ runs through $\cE$. This can be regarded as the analogue of picking a spanning tree of $G_1^0$ (when it is connected) as in \cite[Theorem~2.11]{DLRV}. Indeed, suppose that $G_1^0$ is connected and $\cT$ is a spanning tree. Since $l\ne p$, an $l$-isogeny $\phi:E_1\rightarrow E_2$ induces an isomorphism $\phi^*:T_p(E_1)\rightarrow T_p(E_2)$. Thus, once a basis is picked for one $E\in\cE$, this basis can be propagated to all other curves in $\cE$ along $\cT$.\e
\end{remark}

\begin{remark}
    Since we have realized $G_1^m$ as a voltage graph arising from a voltage assignment on $G_1^0$, this gives an alternative proof of Lemma~\ref{lem:cover} in the special case where $N=1$.\e
\end{remark}

\bibliographystyle{amsalpha}
\bibliography{references}

\providecommand{\bysame}{\leavevmode\hbox to3em{\hrulefill}\thinspace}
\providecommand{\MR}{\relax\ifhmode\unskip\space\fi MR }
\providecommand{\MRhref}[2]{%
  \href{http://www.ams.org/mathscinet-getitem?mr=#1}{#2}
}
\providecommand{\href}[2]{#2}
\begin{thebibliography}{DLRV24}

\bibitem[Arp22]{arpin}
Sarah Arpin, \emph{Adding level structure to supersingular elliptic curve isogeny graphs}, 2022, to appear in J. Théor. Nombres Bordeaux, available at arXiv:2203.03531.

\bibitem[BCP22]{pazuki}
Henry Bambury, Francesco Campagna, and Fabien Pazuki, \emph{{Ordinary isogeny graphs over $\mathbb{F}_p$: The inverse volcano problem}}, 2022, preprint, arxiv: 2210.01086.

\bibitem[CL23]{codogni-lido}
Giulio Codogni and Guido Lido, \emph{Spectral theory of isogeny graphs}, 2023, preprint, arXiv:2308.13913.

\bibitem[CM20]{crisan-mueller}
Vlad Cri\c{s}an and Katharina M\"{u}ller, \emph{The vanishing of the {$\mu$}-invariant for split prime {$\Bbb Z_p$}-extensions over imaginary quadratic fields}, Asian J. Math. \textbf{24} (2020), no.~2, 267--302.

\bibitem[DK23]{delbourgoheiko}
Daniel Delbourgo and Heiko Knospe, \emph{On {I}wasawa {$\lambda$}-invariants for abelian number fields and random matrix heuristics}, Math. Comp. \textbf{92} (2023), no.~342, 1817--1836.

\bibitem[DLRV24]{DLRV}
Cédric Dion, Antonio Lei, Anwesh Ray, and Daniel Vallières, \emph{On the distribution of {I}wasawa invariants associated to multigraphs}, Nagoya Math. J. \textbf{253} (2024), 48--90.

\bibitem[dS87]{shalit}
Ehud de~Shalit, \emph{Iwasawa theory of elliptic curves with complex multiplication}, Perspectives in Mathematics, vol.~3, Academic Press, Inc., Boston, MA, 1987, $p$-adic $L$ functions.

\bibitem[EJV11]{EJV}
Jordan~S. Ellenberg, Sonal Jain, and Akshay Venkatesh, \emph{Modeling {$\lambda$}-invariants by {$p$}-adic random matrices}, Comm. Pure Appl. Math. \textbf{64} (2011), no.~9, 1243--1262.

\bibitem[GK21]{gorenkassaei}
Eyal~Z. Goren and Payman~L. Kassaei, \emph{{$p$}-adic dynamics of {H}ecke operators on modular curves}, J. Th\'{e}or. Nombres Bordeaux \textbf{33} (2021), no.~2, 387--431.

\bibitem[Hid09]{hida09}
Haruzo Hida, \emph{Irreducibility of the {I}gusa tower}, Acta Math. Sin. (Engl. Ser.) \textbf{25} (2009), no.~1, 1--20.

\bibitem[Igu59]{igusa}
Jun-ichi Igusa, \emph{Kroneckerian model of fields of elliptic modular functions}, Amer. J. Math. \textbf{81} (1959), 561--577.

\bibitem[Iwa69]{iwasawa69}
Kenkichi Iwasawa, \emph{Analogies between number fields and function fields}, Some {R}ecent {A}dvances in the {B}asic {S}ciences, {V}ol. 2 ({P}roc. {A}nnual {S}ci. {C}onf., {B}elfer {G}rad. {S}chool {S}ci., {Y}eshiva {U}niv., {N}ew {Y}ork, 1965-1966), Yeshiva Univ., Belfer Graduate School of Science, New York, 1969, pp.~203--208.

\bibitem[Iwa73]{iwasawa73}
\bysame, \emph{On {${\bf Z}_{l}$}-extensions of algebraic number fields}, Ann. of Math. (2) \textbf{98} (1973), 246--326.

\bibitem[KM85]{katz-mazur}
Nicholas~M. Katz and Barry Mazur, \emph{Arithmetic moduli of elliptic curves}, Annals of Mathematics Studies, vol. 108, Princeton University Press, Princeton, NJ, 1985.

\bibitem[Koh96]{kohel}
David~Russell Kohel, \emph{Endomorphism rings of elliptic curves over finite fields}, ProQuest LLC, Ann Arbor, MI, 1996, Thesis (Ph.D.)--University of California, Berkeley.

\bibitem[Lan87]{lang-elliptic}
Serge Lang, \emph{Elliptic functions}, second ed., Graduate Texts in Mathematics, vol. 112, Springer-Verlag, New York, 1987, With an appendix by J. Tate.

\bibitem[LM23]{LM2}
Antonio Lei and Katharina Müller, \emph{On towers of isogeny graphs with full level structure}, 2023, preprint, arXiv:2309.00524.

\bibitem[MS21]{Ari}
Nathanaël Munier and Ari Shnidman, \emph{Sandpile groups of supersingular isogeny graphs}, to appear in J. Théor. Nombres Bordeaux, available at arXiv:2111.10389, 2021.

\bibitem[MV23]{vallieres2}
Kevin McGown and Daniel Valli\`eres, \emph{On abelian {$\ell$}-towers of multigraphs {II}}, Ann. Math. Qu\'{e}. \textbf{47} (2023), no.~2, 461--473.

\bibitem[MV24]{vallieres3}
\bysame, \emph{On abelian {$\ell$}-towers of multigraphs {III}}, Ann. Math. Qu\'{e}. \textbf{48} (2024), no.~1, 1--19.

\bibitem[MW83]{mazurwiles83}
B.~Mazur and A.~Wiles, \emph{Analogies between function fields and number fields}, Amer. J. Math. \textbf{105} (1983), no.~2, 507--521.

\bibitem[PV23]{PV}
Riccardo Pengo and Daniel Vallières, \emph{Spanning trees in $\mathbb{Z}$-covers of a finite graph and {Mahler} measures}, preprint, arXiv:2310.15619, 2023.

\bibitem[Rod19]{thesis-roda}
Megan Roda, \emph{{Supersingular Isogeny graphs with level $N$ structure and path problems on ordinary isogeny graphs}}, 2019, Master thesis, McGill University, https://escholarship.mcgill.ca/concern/theses/c247dx821.

\bibitem[Sug17]{sugiyama}
Kennichi Sugiyama, \emph{Zeta functions of {R}amanujan graphs and modular forms}, Comment. Math. Univ. St. Pauli \textbf{66} (2017), no.~1-2, 29--43.

\bibitem[Sut13]{sutherland}
Andrew~V. Sutherland, \emph{Isogeny volcanoes}, A{NTS} {X}---{P}roceedings of the {T}enth {A}lgorithmic {N}umber {T}heory {S}ymposium, Open Book Ser., vol.~1, Math. Sci. Publ., Berkeley, CA, 2013, pp.~507--530.

\bibitem[Val21]{vallieres}
Daniel Valli\`eres, \emph{On abelian {$\ell$}-towers of multigraphs}, Ann. Math. Qu\'{e}. \textbf{45} (2021), no.~2, 433--452.

\bibitem[XZQ23]{XZQ}
Guanju Xiao, Zijian Zhou, and Longjiang Qu, \emph{Oriented supersingular elliptic curves and {E}ichler orders}, 2023, preprint, arXiv 2312.08844.

\end{thebibliography}

\end{document}